\renewcommand{\le}{\leqslant}
\renewcommand{\ge}{\geqslant}
\let \la=\lambda
\let \La=\Lambda
\let \e=\varepsilon
\let \d=\delta
\let \o=\omega
\let \a=\alpha
\let \b=\beta
\let \O=\Omega
\let \G=\Gamma
\newcommand{\s}{\mathcal S}
\newcommand{\md}{\mathscr D}
\newcommand{\wt}{\widetilde}
\newtheorem{theorem}{Theorem}[section]
\newtheorem{lemma}[theorem]{Lemma}
\theoremstyle{definition}
\newtheorem{definition}[theorem]{Definition}
\theoremstyle{remark}
\newtheorem{remark}[theorem]{Remark}
\numberwithin{equation}{section}
\begin{document}


\begin{center}
{\bf\Huge
Intuitive dyadic calculus: the basics}
\end{center}
\vskip 1cm
\begin{center}
{\Large Andrei K. Lerner and Fedor Nazarov}
\end{center}
\vskip 2cm







\begin{figure}[ht]
\begin{center}
\includegraphics[width=12cm,height=11cm]{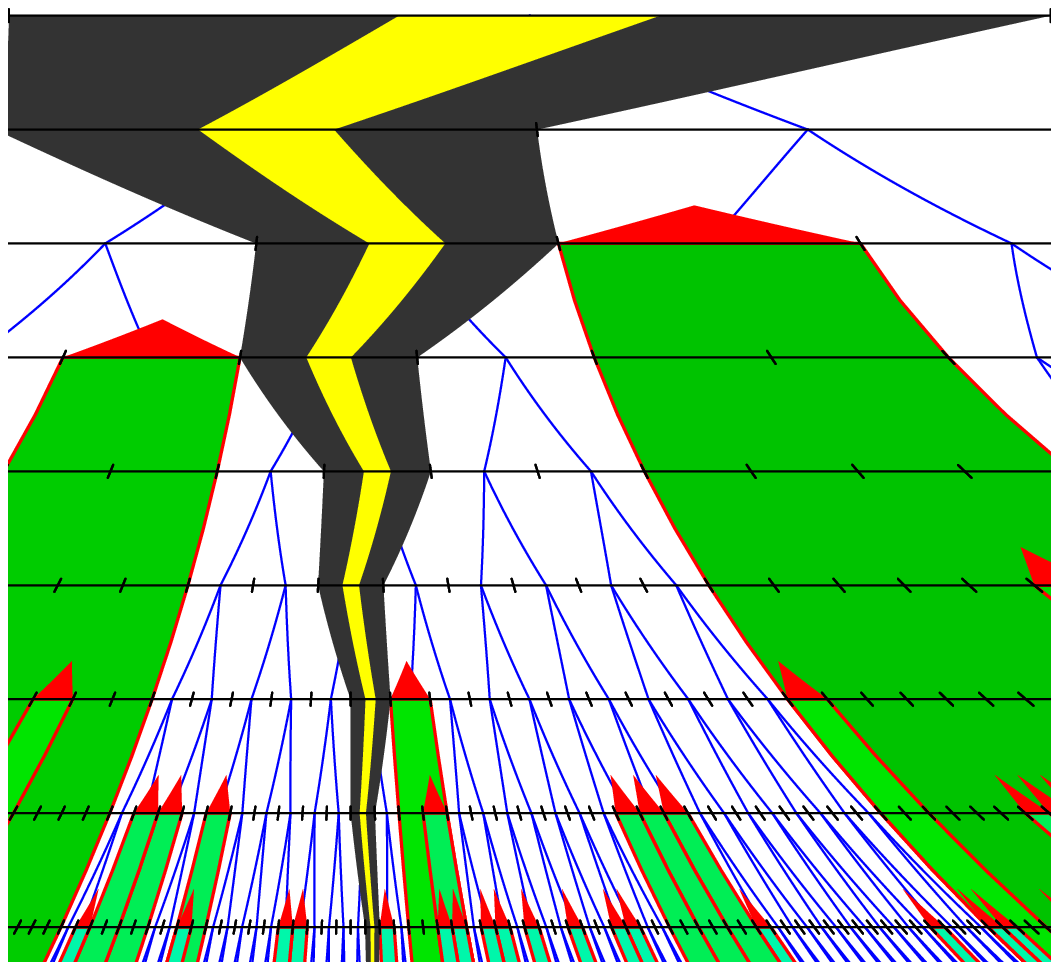}
\label{lightningbolt}
\end{center}
\end{figure}

\thispagestyle{empty}

\tableofcontents

\begin{center}
{\bf Foreword}
\end{center}
\rightline{\it What is the use of a book without pictures or conversations?}
\medskip

The current language of mathematics is primarily object-oriented. A classical textbook on a given subject
takes some notion, such as that of a singular integral operator or an abelian group and tries to say as much as possible about it
using whatever tools are needed for each particular lemma or proposition. The hope seems to be that the reader
will be fascinated enough with the wide landscape that slowly unfolds before his eyes to learn to jump over
trenches, climb trees and slopes, and wade through thickets and streams essentially on his own after watching his guide doing it effortlessly and with elegance. This certainly does make sense and most of us have learned mathematics exactly in this way.

However, there are at least a few of us, who look at the marble palaces of mathematics not from the viewpoint
of an artist or a town architect, but from that of a mason, a carpenter, or an engineer. When looking at an equine
statue, such a person marvels not so much at the flaring nostrils of the steed or the majestic posture of the rider, but rather at the amazing balance that allows the whole heavy composition to rest on its foundation touching it only by the two rear hoofs of the horse.

If you look at mathematics, or anything else, from this point of view, another landscape will open to your eyes:
not that of towers rising to the sky, wide streets and blossoming gardens, but that of the intricate machinery that makes
the whole city tick in unison: water pumps, electric lines, networks of tunnels and bridges, and so on up to
the concave shape of the pavements and the sewer ditches around them.

The description of this second landscape in an object-oriented language is pretty dull. Indeed, what can be said about a sewer ditch as an isolated object that is interesting enough to merit a discussion? Its beauty reveals itself only in the way it works in a heavy rain keeping the glossy boulevards and the shiny buildings around it safe from floods that would otherwise wash away both the speedy chariots and the noble inhabitants. In other words, its whole tale is in the way it operates, so a function-oriented language would be the best one to employ in this case. The textbooks written in this language are few and we decided that it might make sense to try our own hand at writing one.

Another common feature of many modern textbooks is their encyclopedic character. The authors probably feel like they are obliged to squeeze the whole body of human knowledge into a single volume. The resulting 400+ page leviathans are perfect reference books when one needs to check if some particular statement is true or known, but if one wants to taste the flavor of the subject for the first time, the sheer magnitude of the proposed endeavor of reading a few hundred pages of (usually quite laconic) text can easily scare him away.

Hence, instead of trying to present everything we knew about the dyadic techniques, we tried to hang a thin
thread from the very basics to one of the most recent results about the weighted norm inequalities for singular integral operators. If these words sound a bit intimidating, there is no need to worry. If the reader assumes that all functions and kernels are actually continuous in the entire space and then uses the usual Riemann integration theory instead of the Lebesgue one, he will lose next to nothing.

The resulting text of approximately 50 pages is as self-contained as it can be and we hope that the reader may be tempted to take
a look at something this short. It is also worth mentioning that quite a few of those 50 pages are devoted to explanations of why this or that
should be done in some or other particular way. Without these explanations and the illustrations this book would shrink even further.

We are aware of the criticism that each person visualizes any particular argument in his own way and that the commonly
used language that carefully avoids any informal passages is designed this way precisely to eliminate the superposition of any
subjective views or associations onto the pristinely objective chains of syllogisms. However, we have never seen those pristine chains anywhere.
The way every single definition or statement is composed already reveals if not the personality
of the author, then at least the school to which he belongs. Most important, the author's personality is inseparable
from the way he puts the whole argument together, so the choice is really between presenting a highly personal and subjective
structure of the proof without any labels or with labels explaining what,  in the opinion of the author, each joint or block is there for.

Of course, when looking at works of a highly skilled craftsman with ultimately weird internal logic, the first option may be preferable, but in general to strip the labels away is easier than to restore them. We thus felt free to supplement the formal
arguments with informal comments any time we wanted. We also included pictures and did not hesitate to change the existing terminology slightly or to introduce a new one when we thought that it might help to convey some particular point more efficiently.

The title comes partly as a challenge to some modern calculus textbook writers whose 3-pound bricks, all coming out of the same kiln at the speed of a Ford car plant conveyor belt, would constitute a decent weapon in a fray\footnote{if not for the glossy covers that slip out of one's fingers too easily, though
cutting a hand-size hole near one of the edges can partially remedy the issue and make the book easier to carry around in
times of peace} but are of little assistance in a classroom  (our hats are off for
the likes of M. Spivak, though).
The main problem is that those textbooks teach just to follow standard algorithms, thus viewing a student as a (rather badly wired and slow)
computer to program, never appealing to such natural human feelings as curiosity and admiration of the unfamiliar.
While in many these feelings are long dead and difficult to resurrect, the flickering light of the magic lamp of human wit and intelligence is not yet totally obscured by the infernal fires of regional wars or extinguished by the brainwashing waterfalls of mass media. We wanted to add some oil to that lamp. Whether and to what extent we have succeeded or failed is left to the judgement of the reader.

\medskip
\rightline{\textit{Summer 2014, Kent--Ramat Gan}}

\newpage
\section{Introduction}
The dyadic technique is a game of cubes, and this is the way we try to present it. We start the general theory with the basic
notion of a dyadic lattice, proceed with the multiresolution, the Three Lattice Theorem (probably due to T. Hyt\"onen),
augmentation and stopping times, and finish the  exposition with Carleson families.

The point of view we are trying to promote differs from the standard one in only one respect:
we try to exhibit various standard families of cubes as whole game sets, which can be reshuffled
and complemented freely in the related constructions as the need arises, rather than defining them recursively and going from one generation to another,
checking the properties by (backward) induction. This point of view led us to the introduction of an explicit graph structure
on a dyadic lattice and a couple of related notions, which are almost always mentioned in informal conversations
between specialists under various names, but had hardly ever been formalized in writing.

The particular application we chose to demonstrate how the dyadic cubes can be played with in analysis is weighted norm
inequalities for singular integral operators. The high points of this second half of the book are the estimate for an arbitrary
measurable function in terms of its $\la$-oscillations on a Carleson family of cubes,
a pointwise bound that allows one to easily reduce the weighted inequalities
for Calder\'on-Zygmund operators to those for sparse operators,
and a unified approach to the linear and multilinear theory.

The (small) cost to pay
is that the formalization of the multiweight problem most suitable for our exposition is somewhat different from the
commonly used one, though the translation from our language to the standard one is immediate, and we state the main result in this
book (the ``$A_p$-conjecture" about the sharp dependence of the operator norms in weighted $L^p$ spaces on the
joint Muckenhoupt norm of weights) in both forms in the final sections.

The history of dyadic techniques and weighted norm inequalities merits a separate volume. If such a monograph were ever
written  it would probably be longer than our entire exposition even if restricted to stating merely who did what
and when, omitting all subtle interplays of events and ideas.
We highlight a few relevant works most directly connected with our presentation
 in the short Historical Notes section near the end of this book,
but we are very far from claiming that our selection is complete or even representative.

\medskip
{\bf Acknowledgements}.
We are thankful to numerous people who read the preliminary version of this manuscript and shared their
remarks and suggestions with us. This project would also be difficult to carry out without the genereous support of the Israel Science Foundation and the National Science Foundation 
\footnote{ISF grant 953/13 (A.L.) and NSF grant DMS 080243 (F.N.)}.

\section{Dyadic cubes and lattices}
By a cube in ${\mathbb R}^n$ of sidelength $l$ we always mean a half-open cube
$$Q=[x_1,x_1+l)\times\dots\times[x_n,x_n+l)\quad(x=(x_1,\dots,x_n)\in {\mathbb R}^n,l>0)$$
with sides parallel to the coordinate axes. We will call the point $x$ the ``corner"
of the cube $Q$ and denote it by $x(Q)$. It will be also convenient to introduce the notation $c(Q)$
for the ``center" $x+\frac{1}{2}(l,\dots,l)$ of $Q$ and $\ell_Q$ for the sidelength of $Q$.

Let $Q$ be any cube in ${\mathbb R}^n$. A (dyadic) child of $Q$ is any of the $2^n$ cubes obtained by partitioning $Q$ by $n$
``median hyperplanes" (i.e, the hyperplanes parallel to the faces of $Q$ and dividing each edge into 2 equal parts).

\begin{figure}[ht]
\begin{center}
\includegraphics[width=5cm,height=5cm]{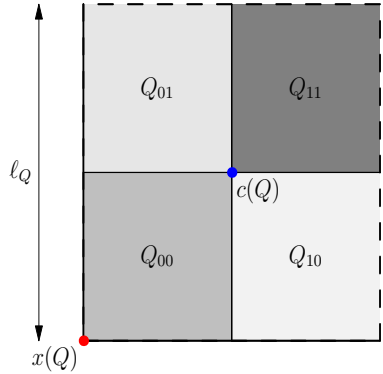}
\caption[]{A square $Q$ and its 4 dyadic children $Q_{ij}$.}
\label{dyadiccube}
\end{center}
\end{figure}

Passing from $Q$ to its children, then to the children of the children, etc., we obtain a standard dyadic lattice ${\mathcal D}(Q)$
of subcubes of $Q$ (see Figure \ref{DofQ}).

\begin{figure}[ht]
\begin{center}
\includegraphics[width=6.5cm,height=7cm]{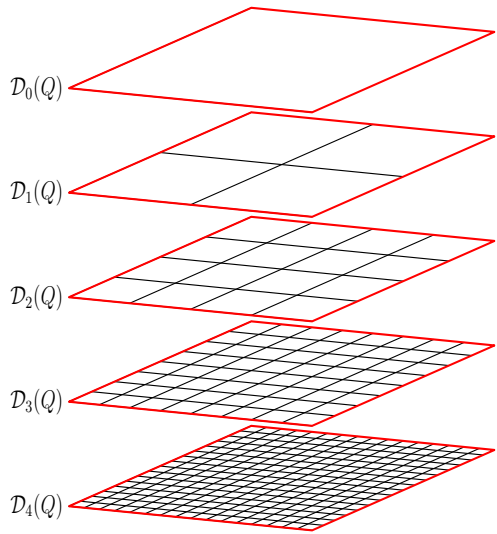}
\caption[]{The top 5 generations in the dyadic lattice $\mathcal D(Q)$.}
\label{DofQ}
\end{center}
\end{figure}

The cubes in this lattice enjoy several nice properties of which the most important ones seem to be

\medskip

\begin{enumerate}
\renewcommand{\labelenumi}{(\roman{enumi})}
\item
for each $k=0,1,2,\dots$, the cubes in the $k$-th generation ${\mathcal D}_k(Q)$ have the same sidelength $2^{-k}\ell_Q$
and tile $Q$ in a regular way, i.e., in the same way as the integer shifts of $[0,1)^n$ tile $\mathbb R^n$;
\item
each cube $Q'\in {\mathcal D}_k(Q)$ has $2^n$ children in ${\mathcal D}_{k+1}(Q)$ contained in it and (unless it is $Q$ itself)
one parent in ${\mathcal D}_{k-1}(Q)$ containing it;
\item
for every two cubes $Q',Q''\in {\mathcal D}(Q)$, either $Q'\cap Q''=\varnothing$, or $Q'\subset Q''$, or $Q''\subset Q'$;
\item
if $Q'\in {\mathcal D}(Q)$, then ${\mathcal D}(Q')\subset {\mathcal D}(Q)$.
\end{enumerate}

\medskip

The dyadic lattice ${\mathcal D}(Q)$ has many other advantages but two essential drawbacks: it is completely rigid and covers only
a  part of the entire space. All that can be done to compensate for these drawbacks is to move
it around and scale, which is enough for most purposes but still forces one to use awkward phrases
like ``considering only functions with
compact support and choosing $Q$ so that it covers the support" or ``assuming that $\int f=0$ to avoid
writing the top ``mean" term
in the Haar decomposition of $f$ separately".

Our first goal will be to introduce a notion of a dyadic lattice ${\mathscr D}$ that takes care of all ${\mathbb R}^n$ at once. Our particular choice of axioms
was dictated by the necessity to ensure that certain constructions be possible and the desire to make the property
of being a lattice reasonably easy to use or verify.
The cost is that the ``classical'' dyadic lattice
$${\mathcal D}=\{2^{-k}([m_1,m_1+1)\times\dots\times[m_n,m_n+1)): k\in {\mathbb Z}, m_1,\dots,m_n\in {\mathbb Z}\}$$
is not a dyadic lattice in our sense, though any finite set of cubes in ${\mathcal D}$ is a subset of some (actually, infinitely many) dyadic lattices
${\mathscr D}$.

\begin{definition}\label{dyadlat}
A dyadic lattice ${\mathscr D}$ in ${\mathbb R}^n$ is any collection of cubes such that
\begin{enumerate}[{(DL-1)}]
\item
if $Q\in\md$, then each child of $Q$ is in $\md$ as well (this, of course, implies immediately that ${\mathcal D}(Q)\subset\md$);
\item
every 2 cubes $Q',Q''\in \md$ have a common ancestor, i.e., there exists $Q\in\md$ such that $Q',Q''\in {\mathcal D}(Q)$;
\item
for every compact set $K\subset {\mathbb R}^n$, there exists a cube $Q\in \md$ containing $K$.
\end{enumerate}
\end{definition}

Property (DL-2) can be used several times in a row to find a common ancestor of any finite family of cubes by first finding an ancestor
$\widetilde Q_2$ of $Q_1$ and $Q_2$, then an ancestor $\widetilde Q_3$ of $\widetilde Q_2$ and $Q_3$, etc.
It also ensures that all cubes in $\md$ lie in a usual neat way with respect to each other, so all useful properties of cubes in ${\mathcal D}(Q)$
hold in $\md$ as well except the existence of the ``very top cube", which is usually a nuisance rather than an asset.

We can also split the cubes in $\md$ into generations $\md_k$ ($k\in {\mathbb Z}$) by choosing an arbitrary cube $Q_0\in \md$
and declaring it of generation $0$. The generation of any other cube $Q$ can be then determined by finding some common ancestor $P$ of
$Q$ and $Q_0$ and taking the difference of the generations of $Q$ and $Q_0$ in ${\mathcal D}(P)$.

It follows immediately from the way the generations are defined that all cubes in $\md_k$ have the same sidelength
$2^{-k}\ell_{Q_0}$. Property (DL-3) and the common ancestor trick also imply that the cubes in $\md_k$
tile ${\mathbb R}^n$.

We always think that the generation number increases as the cubes shrink and decreases as they expand, so the parent of $Q\in\md_k$ is in $\md_{k-1}$
and the children of $Q$ are in $\md_{k+1}$.

Once the definition is given, our first task is to demonstrate that dyadic lattices do exist. It is not hard and the idea is the same as
in the construction of the classical dyadic lattice: take any cube $Q$, construct the dyadic lattice ${\mathcal D}(Q)$, and then expand it
inductively up and sideways by choosing one of $2^n$ possible parents for the top cube and including it into $\md$ together with all its dyadic
subcubes during each step. The only catch is that the choice should be done either carefully enough, or recklessly enough (both approaches work)
to ensure that we do not just move in one direction all the time, like in the classical lattice where we always expand the top cube from its corner.
The particular way of expansion we will use is alternating the expansions from the corner with those from the vertex opposite to the corner.

\begin{figure}[ht]
\begin{center}
\includegraphics[width=11cm,height=5cm]{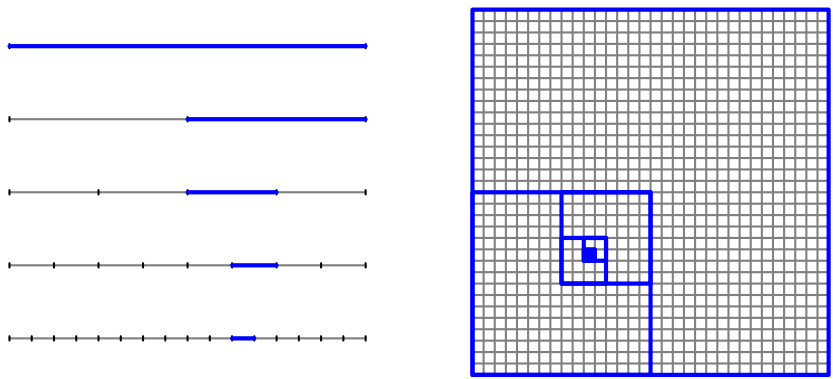}
\caption[]{The expanding families of intervals and squares.}
\label{wayup}
\end{center}
\end{figure}

This is by no means the only way to go. Starting with any cube $Q_0$ and the corresponding dyadic lattice ${\mathcal D}(Q_0)$, construct
any ascending sequence of cubes $Q_0\subset Q_1\subset Q_2\subset\dots$ so that $Q_j$ is a dyadic child of $Q_{j+1}$ and $\cup_j\text{int}Q_j={\mathbb R}^n$. Note that then $Q_j\in {\mathcal D}(Q_{j+1})$ and, thereby ${\mathcal D}(Q_{j})\subset {\mathcal D}(Q_{j+1})$. Put
${\mathscr D}=\cup_{j=1}^{\infty}{\mathcal D}(Q_{j})$. Then $\md$ satisfies all conditions of Definition \ref{dyadlat}.
\medskip
\begin{enumerate}[{(DL-1)}]
\item
If $Q\in\md$, then $Q\in {\mathcal D}(Q_j)$ for some $j$, so all children of $Q$ are in the same ${\mathcal D}(Q_j)\subset\md$.
\item
Let $Q',Q''\in \md$. Then $Q'\in{\mathcal D}(Q_{j'})$ and $Q''\in{\mathcal D}(Q_{j''})$ for some $j',j''$. The cube $Q_{\max(j',j'')}\in\md$ is a common ancestor of
$Q',Q''$.
\item
$\text{int}Q_j$ is an expanding chain of open sets covering the entire space, so any compact will get absorbed sooner or later.
\end{enumerate}

\medskip

The chain depicted on Figure \ref{wayup} is
$$
Q_j = \begin{cases} [-2^j,2^{j+1})^n, & j\,\text{is odd}; \\
[-2^{j+1},2^j)^n,& j\,\text{is even}. \end{cases}
$$
It clearly satisfies all requirements, but most other sequences will do just as well.

\subsection{Dyadic lattice as a multiresolution}
Any dyadic lattice $\md$ consists of a sequence of generations $\md_k$. The cubes in $\md_k$ form
a regular tiling $T_k$ of ${\mathbb R}^n$. More precisely:

\medskip
$(*)$ if we know any cube $Q\in T_k$, then all cubes in $T_k$
are given by $x(Q)+\ell_Qu+\ell_QQ_0$, where $u\in{\mathbb Z}^n$ and $Q_0=[0,1)^n$.
\medskip

The tilings $T_k$ and $T_{k+1}$ agree with each other in the sense that every cube in $T_{k+1}$ is a child of a unique cube in $T_k$, so
$T_{k+1}$ is a refinement of $T_k$. Property (DL-3) can be restated in terms of tilings $T_k$ in literally the same way:
every compact $K\subset {\mathbb R}^n$ is contained in some cube in one of the tilings $T_k$.

This all makes it tempting to look at dyadic lattices from another point of view and to introduce the following definition.

\begin{definition}\label{multires}
Let $T=\cup_{k\in\mathbb Z}T_k$ be a system of cubes comprised of regular (in the sense of $(*)$) tilings $T_k$ of ${\mathbb R}^n$.
We say that $T$ is a dyadic multiresolution of ${\mathbb R}^n$ if
\begin{enumerate}[{(DM-1)}]
\item
for every $k$, the tiling $T_{k+1}$ consists exactly of dyadic children of cubes in $T_k$;
\item
for every compact $K\subset {\mathbb R}^n$, there exists some $k\in{\mathbb Z}$ and $Q\in T_k$ such that $Q\supset K$.
\end{enumerate}
\end{definition}

\begin{remark}\label{conddm}
Condition (DM-1) may be quite unpleasant to check directly. However, there is a nice simple criterion for it to hold: if we can find a cube
$Q=x(Q)+\ell_Q Q_0\in T_k$ such that its ``corner child" $x(Q)+\frac{\ell_Q}{2}Q_0$ is in $T_{k+1}$, then (DM-1) holds.

Indeed, then every cube $Q'\in T_{k+1}$ can be represented as
$$x(Q)+\frac{\ell_Q}{2}u+\frac{\ell_Q}{2}Q_0,\quad u\in {\mathbb Z}^n.$$
Writing $u=2v+\e, v\in {\mathbb Z}^n,\e\in \{0,1\}^n$ and observing that the dyadic children of a cube $R$ are $x(R)+\frac{\ell_R}{2}\e+\frac{\ell_R}{2}Q_0,\e\in\{0,1\}^n,$
we see that $Q'$ is a child of $x(Q)+\ell_Qv+\ell_QQ_0\in T_k$.
\end{remark}

Note that for all $k$, if the tiling $T_{k+1}$ consists of dyadic children of cubes in $T_k$, then $T_{k+2}$ consists of dyadic grandchildren of cubes in $T_k$, etc.,
so $T_{k+m}=\cup_{Q\in T_k}{\mathcal D}_m(Q)$ where $\mathcal D_m(Q)$ is the $m$-th generation of $\mathcal D(Q)$.
Since the cubes in $T_k$ are pairwise disjoint, this implies, in particular, that for two cubes $Q\in T_k$
and $Q'\in T_{k+m}$ in a multiresolution, the conditions $Q'\subset Q$ and $Q'\in {\mathcal D}(Q)$ are equivalent.

Now we are ready to show that every dyadic multiresolution is a dyadic lattice. Indeed, if $Q\in T_k$ for some $k$, then every child of $Q$ belongs to $T_{k+1}$,
establishing (DL-1). Property (DL-3) is the same as property (DM-2).
Finally, if $Q',Q''\in T$, then, by (DM-2), there exists $Q\in T$ such that $Q',Q''\subset Q$. But, as we have seen above, this is equivalent to $Q',Q''\in {\mathcal D}(Q)$,
establishing (DL-2).

\subsection{The baby Lebesgue differentiation theorem}

Dyadic lattices are often used in analysis to discretize some statements and arguments about functions
on $\mathbb R^n$. The general idea is that every reasonable function $f:\mathbb R^n\to\mathbb R$ is
constant up to a negligible error on sufficiently small cubes $Q\in\mathscr D$. This assertion can be understood
literally if $f$ is continuous. If the function $f$ is merely measurable, this general principle should
be interpreted with some caution. The particular formulation we present in this section is
not the strongest one but it is easy to prove and suffices for all purposes of this book.

Let $f$ be a measurable function that is finite almost everywhere. We say that $x\in {\mathbb R}^n$ is a (dyadic) weak
Lebesque point of $f$ if for every $\e,\la>0$, we have
$$|\{y\in Q:|f(y)-f(x)|>\e\}|<\la|Q|$$
for at least one cube $Q\in {\mathscr D}$ containing $x$.

\begin{theorem}
Almost every point $x\in {\mathbb R}^n$ is a weak Lebesgue point of $f$.
\end{theorem}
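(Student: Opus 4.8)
If $f$ is continuous the statement is trivial: in each generation $\md_k$ exactly one cube contains $x$, and these cubes shrink to $\{x\}$ as $k\to+\infty$, so $\{y\in Q:|f(y)-f(x)|>\e\}$ is empty for any small enough cube $Q\in\md$ through $x$. The whole content is therefore in handling a merely measurable $f$, and the plan is to run the classical dyadic Lebesgue differentiation argument and then upgrade it to the ``Lebesgue point'' form actually demanded. I would first make three reductions. Since $\{y:|f(y)-f(x)|>\e\}$ only shrinks as $\e$ grows and the condition in the definition only weakens as $\la$ grows, a point $x$ with $f(x)$ finite fails to be a weak Lebesgue point precisely when it lies in one of the countably many sets $E_{\e,\la}$ ($\e,\la$ rational, $0<\e,\la<1$), where $E_{\e,\la}=\{x:\ |\{y\in Q:|f(y)-f(x)|>\e\}|\ge\la|Q|\text{ for every }Q\in\md\text{ with }x\in Q\}$; so it is enough to show each $E_{\e,\la}$ is null. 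Next, if $x\in E_{\e,\la}$ and $|f(x)|<N$, replacing $f$ by its truncation $g$ at height $N+1$ leaves the set $\{y:|f(y)-f(x)|>\e\}$ unchanged when $\e<1$ (on $\{|f|\le N+1\}$ nothing moves, while on $\{|f|>N+1\}$ both $|f(y)-f(x)|$ and $|g(y)-g(x)|$ exceed $1>\e$) and keeps $g(x)=f(x)$, so $x$ lies in the analogous bad set for the bounded function $g$; writing $E_{\e,\la}\subset\bigcup_N\bigl(E_{\e,\la}\cap\{|f|<N\}\bigr)$, it suffices to treat bounded $f$. Finally, exhausting $\mathbb R^n$ by an increasing sequence of cubes of $\md$ (as in the construction following Definition \ref{dyadlat}), it suffices to show that almost every $x$ in one fixed such cube $Q_0$ is a weak Lebesgue point, using only the cubes $Q^{(m)}(x)\in\mathcal D_m(Q_0)$ containing $x$. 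Now $f\in L^1(Q_0)$.

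The heart of the matter is the dyadic Lebesgue differentiation theorem on $Q_0$: for every $h\in L^1(Q_0)$, $\frac1{|Q^{(m)}(x)|}\int_{Q^{(m)}(x)}h\to h(x)$ for a.e.\ $x\in Q_0$. I would prove it in the usual way. First, the dyadic maximal function $M_{Q_0}h(x)=\sup\bigl\{\frac1{|R|}\int_R|h|:\ R\in\mathcal D(Q_0),\ R\ni x\bigr\}$ satisfies the weak-type bound $|\{x\in Q_0:M_{Q_0}h(x)>t\}|\le\frac1t\|h\|_{L^1(Q_0)}$: if $t$ exceeds the average of $|h|$ over $Q_0$, select the maximal cubes of $\mathcal D(Q_0)$ on which the average of $|h|$ exceeds $t$ (such maximal cubes exist because the chain of dyadic ancestors of a given cube up to $Q_0$ is finite and $Q_0$ is not one of them); these cubes are pairwise disjoint, their union is exactly $\{M_{Q_0}h>t\}$, and summing $|R|<\frac1t\int_R|h|$ over them gives the claim (the case $t\le\frac1{|Q_0|}\|h\|_{L^1(Q_0)}$ is trivial). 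Second, for $h$ continuous on $\overline{Q_0}$ the averages converge to $h$ everywhere, by uniform continuity and $\mathrm{diam}\,Q^{(m)}(x)\to0$. Then the standard splitting $h=h_0+(h-h_0)$ with $h_0$ continuous and $\|h-h_0\|_{L^1(Q_0)}$ small, combined with the weak-type bound, shows that for each $t=1/j$ the set $\{x\in Q_0:\limsup_m\bigl|\frac1{|Q^{(m)}(x)|}\int_{Q^{(m)}(x)}h-h(x)\bigr|>t\}$ is null; intersecting over $j$ completes this step.

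To get from convergence of averages to the assertion, apply the above to $h=|f-c|$ for every rational $c$ at once: for a.e.\ $x\in Q_0$ and every rational $c$, $\frac1{|Q^{(m)}(x)|}\int_{Q^{(m)}(x)}|f-c|\to|f(x)-c|$. Choosing $c$ within $\sigma$ of $f(x)$ gives $\limsup_m\frac1{|Q^{(m)}(x)|}\int_{Q^{(m)}(x)}|f-f(x)|\le2\sigma$, hence, letting $\sigma\to0$, $\frac1{|Q^{(m)}(x)|}\int_{Q^{(m)}(x)}|f-f(x)|\to0$. Now Chebyshev's inequality yields, for any prescribed $\e,\la>0$, $|\{y\in Q^{(m)}(x):|f(y)-f(x)|>\e\}|\le\frac1\e\int_{Q^{(m)}(x)}|f-f(x)|<\la|Q^{(m)}(x)|$ for all large $m$, and $Q^{(m)}(x)\in\md$ contains $x$; so $x$ is a weak Lebesgue point. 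Unwinding the three reductions finishes the proof.

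The only genuinely substantial ingredient is the weak-type $(1,1)$ bound for the dyadic maximal function — the sole place where something beyond the trivial continuous case happens — and even that has a one-paragraph proof via selection of maximal overflowing cubes. Everything else is the routine approximation argument or bookkeeping (the truncation and exhaustion reductions). The one point to keep in mind is that the definition of a weak Lebesgue point quantifies over \emph{all} cubes of $\md$ through $x$, which is why it is worth recording explicitly that in a dyadic lattice there is exactly one cube of each generation through $x$, so those cubes are nested and shrink to $x$, and why the monotonicity-in-$(\e,\la)$ reduction at the start is convenient.
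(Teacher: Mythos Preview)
Your proof is correct, but it takes a genuinely different route from the paper's.

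You prove the full dyadic Lebesgue differentiation theorem (indeed the strong form $\frac1{|Q^{(m)}(x)|}\int_{Q^{(m)}(x)}|f-f(x)|\to 0$) via the weak-type $(1,1)$ inequality for the dyadic maximal function and density of continuous functions in $L^1$, then read off the weak Lebesgue point property through Chebyshev. This is the classical road and yields a stronger conclusion than is needed: for a.e.\ $x$ the condition holds for \emph{all} sufficiently small dyadic cubes through $x$, not merely for one. The price is more machinery (the maximal inequality, $L^1$-approximation by continuous functions) and the bookkeeping reductions (truncation to make $f$ bounded, localization to $Q_0$) that are needed to put $f$ into $L^1$.

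The paper's argument is completely different and strikingly more elementary: no integrals, no maximal function, no density, no truncation. It fixes $\e,\la$, slices $\mathbb R^n$ into the level sets $E(a,R)=\{|x|\le R,\ |f(x)-a|<\e/3\}$, and shows directly that the bad points inside each $E(a,R)$ have outer measure zero by a covering contradiction: cover them efficiently by dyadic cubes, and use the very definition of ``bad'' to peel off a $\la$-fraction of each covering cube that lies outside $E(a,R)$, forcing $\mu\le(1-\la^2)\mu$. This works for any measurable $f$ that is finite a.e., with no local integrability assumed, and it uses nothing beyond the definition of outer Lebesgue measure. The trade-off is that it proves only the ``weak'' statement actually claimed, whereas your route delivers the stronger differentiation theorem essentially for free once the maximal inequality is in hand (a result the paper proves anyway, much later, in Section~15).
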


\begin{proof}
Fix $\e,\la>0$ and consider the set of ``bad'' points $x\in {\mathbb R}^n$ for which for every cube $Q\in {\mathscr D}$
containing $x$, there is a set $U(Q,x)\subset Q$ of measure $|U(Q,x)|\ge \la |Q|$ such that $|f-f(x)|\ge \e$
on $U(Q,x)$. For $a\in {\mathbb R}, R>0$, let
$$E(a,R)=\{x:|x|\le R, |f(x)-a|<\e/3\}.$$
Note that $\cup_{l\in\mathbb Z,m\ge 1}E(l\e/3,m)={\mathbb R}^n$, so it is enough to show that the outer Lebesgue measure
$\mu$ of the set $ E_{\text{bad}}(a,R)$ of the bad points in each $E(a,R)$ is~$0$. Assume $\mu>0$. By the definition of the outer Lebesgue measure,
we can choose a dyadic cover%
\footnote{The standard definition of the Lebesgue outer measure uses arbitrary parallelepipeds, but each
parallelepiped can be covered by finitely many dyadic cubes of the total volume as close to the volume
of the parallelepiped as one wants.}
$E_{\text{bad}}(a,R)\subset \cup_{Q\in\s}Q$ $(\s\subset {\mathscr D})$ with $\sum_{Q\in \s}|Q|<(1+\la)\mu$.
We can also remove from $\s$ all cubes $Q$ disjoint with $E_{\text{bad}}(a,R)$.
But then for each cube $Q\in \s$, we can choose a point $x(Q)\in Q\cap E_{\text{bad}}(a,R)$ and
note that 
the corresponding  set $U(Q,x(Q))$ is disjoint with $E(a,R)$ because all values of $f$ on $U(Q,x(Q))$ differ by at least $\e$
from $f(x(Q))\in (a-\e/3,a+\e/3)$.  Thus,
$$\mu\le \sum_{Q\in \s}|Q\setminus U(Q,x(Q))|\le (1-\la)\sum_{Q\in \s}|Q|\le (1-\la^2)\mu,$$
which is a clear contradiction.
\end{proof}

\section{The Three Lattice Theorem}
Quite often we can easily estimate some quantities in terms of averages $f_Q=\frac{1}{|Q|}\int_Qf$ of positive functions over
some cubes in ${\mathbb R}^n$ but not necessarily over dyadic cubes.

We would like to estimate these averages by the averages of the same kind but taken over the cubes in some dyadic lattice $\md$ only.
To this end, note that if $Q_1\subset Q\subset Q_2$, then
$$\frac{|Q_1|}{|Q|}f_{Q_1}\le f_Q\le \frac{|Q_2|}{|Q|}f_{Q_2}.$$

Thus, our task can be accomplished with decent precision if for an arbitrary cube $Q$, we can find a pair of dyadic cubes $Q_1,Q_2\in\md$
such that $Q_1\subset Q\subset Q_2$ and the volume ratios $\frac{|Q_1|}{|Q|}$ and $\frac{|Q_2|}{|Q|}$ are not too small or too large
respectively. Finding $Q_1$ is never a problem: just take the cube in $\md$ containing $c(Q)$ whose sidelength is between $\ell_Q/4$
and $\ell_Q/2$. However, it may easily happen that we need to go quite high up in the dyadic lattice to meet the first single cube covering $Q$ (see Figure \ref{badcaseup}).

\begin{figure}[ht]
\begin{center}
\includegraphics[width=5cm,height=5cm]{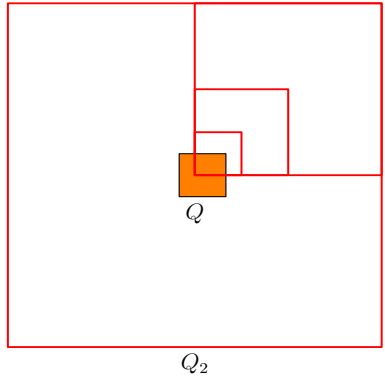}
\caption[]{The smallest cube $Q_2\in \md$ containing $Q$ can be arbitrarily many times larger than $Q$.}
\label{badcaseup}
\end{center}
\end{figure}

We see that we can cover $Q$ by the union of few adjacent cubes of comparable size though. More precisely, if we take the dyadic cube $Q_2$
containing $x(Q)$ and such that $\ell_Q/2<\ell_{Q_2}\le \ell_Q$, then
$$Q\subset \widetilde Q_2=x(Q_2)+3(Q_2-x(Q_2))$$
(this cube with the same corner as $Q_2$ but of triple size is comprised of $3^n$ lattice cubes). Unfortunately, the set
$\widetilde\md=\{\widetilde Q:Q\in\md\}$ is not a dyadic lattice. Or is it? No, of course not: the cubes in $\widetilde \md$ overlap in fancy
ways, which is not allowed for cubes in one lattice. In {\em one} lattice? And now the next logical step is inevitable: $\widetilde \md$ is not a {\em single}
lattice, indeed, but it is a union of {\em several} ($3^n$) dyadic lattices.

\begin{theorem}\label{three}{\rm{(The Three Lattice Theorem)}}
For every dyadic lattice $\md$, there exist $3^n$ dyadic lattices $\md^{(1)},\dots,\md^{(3^n)}$ such that
$$\widetilde \md=\{x(Q)+3(Q-x(Q)):Q\in\md\}=\bigcup_{j=1}^{3^n}\md^{(j)}$$
and for every cube $Q\in \md$ and $j=1,\dots,3^n$, there exists a unique cube $\widetilde R\in \md^{(j)}$ of
sidelength $\ell_{\widetilde R}=3\ell_Q$ containing $Q$.
\end{theorem}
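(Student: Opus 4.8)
The plan is to explicitly classify the tripled cubes $\widetilde Q$ according to the ``position'' of $Q$ within its dyadic parent's lattice, modulo $3$. Concretely, fix a reference generation in $\md$, say $\md_0$, and note that every cube $Q \in \md_k$ has a corner of the form $x(Q) = x_0 + 2^{-k}\ell_{Q_0}\,m$ for some $m \in \mathbb Z^n$ (after shifting so that some fixed cube of $\md_k$ has corner $x_0$; this is property $(*)$). The tripled cube $\widetilde Q = x(Q) + 3(Q - x(Q))$ has sidelength $3\cdot 2^{-k}\ell_{Q_0}$ and corner $x(Q)$. I would assign to $Q$ the residue class $j = j(Q) \in (\mathbb Z/3\mathbb Z)^n$ determined by $m \bmod 3$, and set $\md^{(j)}$ to be the collection of all $\widetilde Q$ with $Q$ ranging over cubes of $\md$ in all generations whose ``index modulo $3$'' equals $j$ at each relevant generation. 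The indexing set $(\mathbb Z/3\mathbb Z)^n$ has exactly $3^n$ elements, matching the count in the statement.

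The first key step is to check that with this definition, for a fixed $j$, the cubes of a given sidelength $3\cdot 2^{-k}\ell_{Q_0}$ in $\md^{(j)}$ tile $\mathbb R^n$ regularly: among the cubes $Q \in \md_k$, those with index $\equiv j \pmod 3$ have corners $x_0 + 2^{-k}\ell_{Q_0}(3\mathbb Z^n + j)$, so the tripled cubes have corners on the lattice $x_0 + 3\cdot 2^{-k}\ell_{Q_0}\mathbb Z^n$ shifted by $2^{-k}\ell_{Q_0} j$, and sidelength exactly $3\cdot 2^{-k}\ell_{Q_0}$ — so they tile perfectly. The second key step is the nesting/refinement property: I must verify that if $Q' \in \md$ is the dyadic parent of $Q$, then $\widetilde{Q'}$ is an \emph{ancestor} (in fact, a suitable generation up) of $\widetilde Q$, and moreover that the index $j$ is consistent under this passage — i.e., the residue class is preserved, so that $\widetilde Q$ and $\widetilde{Q'}$ land in the \emph{same} lattice $\md^{(j)}$. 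This requires relating $j(Q)$ to $j(Q')$: if $x(Q) = x(Q') + 2^{-(k+1)}\ell_{Q_0}\varepsilon$ with $\varepsilon \in \{0,1\}^n$, and $x(Q') = x_0 + 2^{-k}\ell_{Q_0} m'$, then $x(Q) = x_0 + 2^{-(k+1)}\ell_{Q_0}(2m' + \varepsilon)$, and I claim the correct grouping makes $\widetilde Q \subset \widetilde{Q'}$ precisely when the indices agree. Here is where I must be careful: tripling does not commute naively with taking children, so I expect to need the trick that $\widetilde{Q'}$ consists of $3^n$ cubes of $\md_k$, among which $Q'$ sits in the middle, and then $\widetilde Q \subset \widetilde{Q'}$ holds for the $2^n$ children of $Q'$ together with their ``neighbors'' — this has to be reconciled with the residue-class bookkeeping so that exactly the $\md$-descendants of $Q'$ with matching index have their triples inside $\widetilde{Q'}$.

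Once refinement and regular tiling are established for each fixed $j$, verifying Definition \ref{multires} (hence, by the results of Section 2, Definition \ref{dyadlat}) is routine: condition (DM-1) follows from the nesting computation plus the corner-child criterion of Remark \ref{conddm}, and condition (DM-2) follows because $\md$ itself satisfies (DL-3)/(DM-2), so any compact set is contained in some $Q \in \md$, hence in $\widetilde Q$, and one checks $\widetilde Q$ can be taken to lie in the chosen $\md^{(j)}$ by going up enough generations (climbing to a common ancestor of high enough generation whose index is $j$, which exists because at each generation the residue is eventually stabilized or can be arranged). Finally, the uniqueness clause — for every $Q \in \md$ and every $j$, a unique $\widetilde R \in \md^{(j)}$ of sidelength $3\ell_Q$ with $Q \subset \widetilde R$ — follows from the regular-tiling property of the sidelength-$3\ell_Q$ cubes of $\md^{(j)}$: they partition $\mathbb R^n$, so exactly one contains the center $c(Q)$, and since $\ell_{\widetilde R} = 3\ell_Q$ with $\widetilde R$'s corner on the appropriate sublattice, a short computation shows this $\widetilde R$ actually contains all of $Q$, not just its center.

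I expect the main obstacle to be the bookkeeping in the second step: pinning down the exact residue-class assignment so that the tripling operation, which is \emph{not} a lattice homomorphism on the nose, nonetheless respects the parent-child structure within each $\md^{(j)}$. The cleanest route is probably to avoid tracking triples of children directly and instead observe that $\widetilde{Q'}$ for $Q' \in \md_{k}$ is a union of $3^n$ cubes of $\md_k$, then descend $m$ generations inside that union using property (DM-1) of $\md$ (the cubes of $\md_{k+m}$ inside $\widetilde{Q'}$ form a regular $3^n\cdot 2^{mn}$-cube sub-tiling), and finally identify which of those, when tripled, fill up $\widetilde{Q'}$ and carry index $j$ — reducing everything to an elementary statement about how the lattice $3\mathbb Z^n + j$ refines under division by $2$, i.e., about the map $\mathbb Z^n \to (\mathbb Z/3)^n$.
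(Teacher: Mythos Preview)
Your general plan --- partition the tripled cubes into $3^n$ classes by a residue-mod-$3$ label and verify the multiresolution axioms --- is exactly the paper's approach. However, your specific labeling $j(Q)=m\bmod 3$ (where $x(Q)=x_0+2^{-k}\ell_{Q_0}m$) does \emph{not} give dyadic lattices, and this is precisely the gap you flagged but did not close. If $Q'\in\md_{k-1}$ has integer index $m'$, then the corner child $Q\in\md_k$ has index $2m'$, so $j(Q)=2m'\bmod 3$ while $j(Q')=m'\bmod 3$; these differ whenever $m'\not\equiv 0$. Thus your classes are not stable under the corner-child operation, and Remark~\ref{conddm} cannot be applied. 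Relatedly, your claim that ``$\widetilde Q\subset\widetilde{Q'}$ precisely when the indices agree'' is false: a direct computation shows $\widetilde Q\subset\widetilde{Q'}$ for \emph{every} child $Q$ of $Q'$, while $\widetilde Q$ is a \emph{dyadic} child of $\widetilde{Q'}$ only when $Q$ is the corner child ($\varepsilon=0$), regardless of residues. (Also, $Q'$ sits at the \emph{corner} of $\widetilde{Q'}$, not in the middle.)

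The missing idea --- which the paper makes explicit --- is that the equivalence must be put on the \emph{corners themselves} as dyadic-rational points of $\mathbb R^n$, not on the generation-dependent integer coordinates: declare $x'\sim x''$ iff $2^m(x'-x'')\in(3\mathbb Z)^n$ for large $m$. Because $2$ and $3$ are coprime, multiplication by $2$ is a bijection on $(\mathbb Z/3\mathbb Z)^n$, so this relation is well defined across all generations and there are exactly $3^n$ classes. With this labeling the corner child of any $Q$ shares the corner $x(Q)$ and hence the same class, so the corner-child criterion of Remark~\ref{conddm} applies immediately and each class is a dyadic multiresolution. Equivalently, you can salvage your scheme by assigning $j(Q)=2^{-k}m\bmod 3$ (computing $2^{-1}=2$ in $\mathbb Z/3\mathbb Z$), which is exactly the same relation in disguise; once you do that, the rest of your outline (regular tiling at each scale, (DM-2) by climbing in $\md$, uniqueness of the containing triple at each scale) goes through as you describe.
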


\begin{remark}\label{remcor}
An immediate corollary to the  Three Lattice Theorem is that for every $m=1,2,\dots$, there exist $3^{mn}$ dyadic
lattices $\md_{(1)},\dots,\md_{(3^{mn})}$ such that for each set of arbitrary cubes $Q_1,\dots,Q_m$, one can find
$i\in\{1,\dots,3^{mn}\}$ and cubes $\wt Q_1,\dots,\wt Q_m \in \md_{(i)}$ so that for each $k=1,\dots,m$,
one has $\wt Q_k\supset Q_k$ and $|\wt Q_k|\le 3^{mn}|Q_k|$.

This can be easily proved by induction on $m$. The base case $m=1$ immediately follows from the discussion before the
statement of the theorem.
Just take any dyadic lattice $\md$ and consider the lattices $\md^{(j)}$ given by the first assertion of the theorem.
Note that the second assertion has not been used for this case though it will be crucial for the induction step.

Assume now that the claim holds for some $m$ and
 $\md_{(1)},\dots,\md_{(3^{mn})}$ are some dyadic lattices satisfying the required property. Applying the
Three Lattice Theorem to each of these lattices, we get a new set of $3^{(m+1)n}$ lattices $\md_{(i)}^{(j)}$ so that for each $i$,
the set
$$\wt\md_{(i)}=\{x(Q)+3(Q-x(Q)):Q\in\md_{(i)}\}$$
is the union of  $\md_{(i)}^{(j)}$, $j=1,\dots,3^n$.
To finish the induction step, it will suffice to show that this new set of lattices satisfies the required property for $m+1$.

Take any cubes $Q_1,\dots,Q_{m+1}$. By the induction assumption, we can find $i$ and some cubes
$\wt Q_k'\in \md_{(i)}$ ($k=1,\dots,m$) so that $\wt Q'_k\supset Q_k$ and $|\wt Q'_k|\le 3^{mn}|Q_k|$.
This leaves us with the last cube $Q_{m+1}$ to take care of. As it was observed earlier, we can find $j$ and a cube
$\wt Q_{m+1}\in \md_{(i)}^{(j)}$ so that $ \wt Q_{m+1}\supset Q_{m+1}$ and
$|\wt Q_{m+1}|\le 3^n|Q_{m+1}|\le 3^{(m+1)n}|Q_{m+1}|$. It remains to note that, by the second assertion in the
Three Lattice Theorem, we can extend each cube $\wt Q_k'\in \md_{(i)}$ to a $3$ times larger cube
$\wt Q_k\in \md_{(i)}^{(j)}$.

\end{remark}

\begin{proof}[Proof of Theorem \ref{three}]
We start with a picture in ${\mathbb R}^1$:

\begin{figure}[ht]
\begin{center}
\includegraphics[width=10cm,height=5cm]{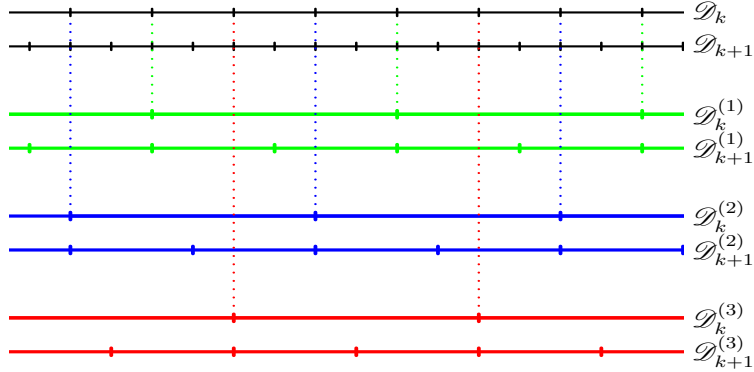}
\caption[]{The lattice $\md$ and three lattices $\md^{(j)}$ shown at two consecutive generations.}
\label{threelattices}
\end{center}
\end{figure}

For the formal argument, it will be convenient to assume that $Q_0=[0,1)^n\in \md_0$ (which can always be achieved by translation and scaling).
Then every other cube $Q\in\md_k$ can be represented as $x+2^{-k}Q_0$, where $x$ is a dyadic rational vector; more precisely,
$2^mx\in {\mathbb Z}^n$
for $m\ge\max(k,0)$. Note that the ``corner child" of $Q=x+2^{-k}Q_0\in\md_k$ is $x+2^{-(k+1)}Q_0\in~ \md_{k+1}$ and that all other cubes in ${\md}_k$ are given by $x+2^{-k}u+2^{-k}Q_0\, (u\in {\mathbb Z}^n)$.

Consider the generation $\widetilde \md_0=\{x+3Q_0:x\in {\mathbb Z}^n\}$ of triple cubes corresponding
to cubes in $\md_0$. Any cube $\widetilde Q=x+3Q_0$ can (and must) be put
into the same dyadic lattice $\md^{(j)}$ with all cubes $\widetilde Q'=x'+3Q_0$ for which $x-x'\in (3{\mathbb Z})^n$.
This gives us an easy and natural way to split one generation $\widetilde \md_0$ into $3^n$ tilings.
Introduce the equivalence relation on $\mathbb Z^n$ by $x\sim y\Leftrightarrow x-y\in (3{\mathbb Z})^n$. The cubes $x+3Q_0,y+3Q_0\in \widetilde\md_0$ are put in the same tiling
if and only if $x\sim y$.

Since $2$ and $3$ are coprime, the divisibility of an integer by $3$ is not affected by multiplication or division by $2$,
so we can easily extend this equivalence relation to the entire set of corners of all dyadic cubes in $\md$
saying that the corners $x'$ and $x''$ of two cubes in $\md$
are equivalent if $2^m(x'-x'')\in (3{\mathbb Z})^n$ for sufficiently large $m$. Since our equivalence relation is invariant under dyadic scaling, we see that the cubes $\widetilde Q\in \widetilde\md_k$ with corners from one equivalence class tile ${\mathbb R}^n$ for each $k\in\mathbb Z$ the same way as it was for $k=0$. Moreover, each cube $Q\in \md_k$ is contained in some
cube of this tiling.

\begin{figure}[ht]
\begin{center}
\includegraphics[width=5cm,height=5cm]{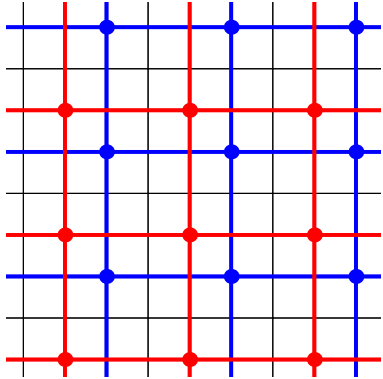}
\caption[]{The tilings of $\mathbb R^2$ by triple cubes with corners in two different equivalence
classes (blue and red).}
\label{twotilings}
\end{center}
\end{figure}

We have already seen that there are $3^n$ equivalence classes in each generation. Note now that if we have $3^n+1$ dyadic rational vectors $x_j$
and take $m$ so large that $2^mx_j\in {\mathbb Z}^n$ for all $j$, then by the pigeonhole principle, the difference of some two of the integer vectors
$2^mx_j$ is in $(3{\mathbb Z})^n$, so there cannot be more than $3^n$ equivalence classes total. Thus

\medskip

(a) there are exactly $3^n$ equivalence classes ${\mathcal E}_j$;

(b) for each $k$, the cubes $\widetilde Q\in\widetilde\md_k$ with corners in the same equivalence class ${\mathcal E}_j$ tile ${\mathbb R}^n$ in a regular way
and the original tiling of ${\mathbb R}^n$ by the cubes $Q\in{\md_k}$ is a refinement of this tiling.

\medskip

Denote
$$\md^{(j)}=\{\widetilde Q\in\widetilde\md:x(\widetilde Q)\in {\mathcal E}_j\}.$$
We will show that each $\md^{(j)}$ is a dyadic multiresolution.
Property (b) already establishes that each $\md^{(j)}$ satisfies (DM-2): just take any compact $K\subset {\mathbb R}^n$, find a cube $Q$ in some $\md_k$ with $Q\supset K$, and then take the cube $\widetilde Q\in \widetilde\md_k$ containing $Q$ with $x(\widetilde Q)\in {\mathcal E}_j$.

To show (DM-1),
take any cube  $\widetilde Q=x(Q)+3(Q-x(Q))\in \md_k^{(j)}$. Then $x(\widetilde Q)=x(Q)\in {\mathcal E}_j$. The corner child $\widetilde Q'$
of $\widetilde Q$ is just $x(Q)+3(Q'-x(Q))$ where $Q'$ is the corner child of $Q$. Since $\widetilde Q'$ and $\widetilde Q$ share the common corner,
it follows that $\widetilde Q'\in\md_{k+1}^{(j)}$ and, by Remark \ref{conddm}, that $\md_{k+1}^{(j)}$ consists exactly of the dyadic children of
the cubes in $\md_{k}^{(j)}$.

Finally, the last statement in property (b) is equivalent to the last assertion of the theorem.
\end{proof}

We want to emphasize that the key non-trivial part of the argument above is hidden not in the verification of some
particular property: each of those is fairly straightforward.  Neither is it hidden in some ingenious step or  in
 combining the intermediate statements in some fancy ways,
though the order of steps does matter if one wants to run the argument in a smooth way and without unnecessary repetitions. The crucial point
is the very possibility to extend the equivalence relation that naturally arises when looking at one generation to all other generations and,
most importantly, {\em across} generations in one wide sweep. It is this extension across generations, which requires the coprimality of the
numbers $2$ and $3$ to work and which is ultimately responsible for making it possible to combine the individual tilings
coming from different generations into finitely many complete dyadic multiresolutions (the reader is encouraged to try to draw a picture similar to Figure \ref{threelattices} to see what will go
wrong if some even number is used in place of $3$).

\section{The forest structure on a subset of a dyadic lattice}

Let ${\mathscr D}$ be a dyadic lattice. Let ${\mathcal S}\subset {\mathscr
D}$
be some family of dyadic cubes.
We will view ${\mathcal S}$ as the set of vertices of a graph
$\Gamma_{\mathcal S}$. We will join two cubes $Q,Q'\in {\mathcal S}$ by a
graph edge
if $Q'\subset Q$ and there is no intermediate cube $Q''\in\s$ (i.e., a cube
such that
$Q'\subsetneqq Q''\subsetneqq Q$) between $Q$ and $Q'$.

\begin{figure}[ht]
\begin{center}
\includegraphics[width=8cm,height=6cm]{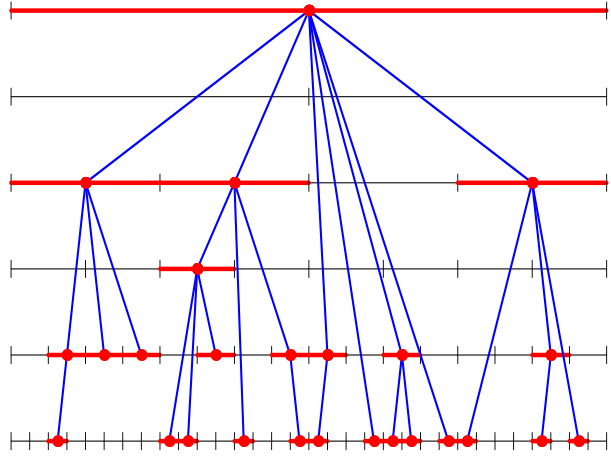}
\caption[]{The graph $\Gamma_\s$ (red intervals are in $\s$).}
\label{gammasedges}
\end{center}
\end{figure}

Though we prefer to view $\G_{\s}$ as a non-oriented graph, there are two
natural directions of motion on $\G_{\s}$: up (from smaller cubes to larger
ones)
and down (from larger cubes to smaller ones).

Note that the graph $\Gamma_{\mathcal S}$ cannot contain any cycle because
the ``lowest'' (smallest) cube $Q$ in that cycle would have to connect to two
different cubes $Q'$ and $Q''$ of larger size.
Then $Q'\cap Q''\supset Q\not=\varnothing$, so either $Q'\supset Q''$, or
$Q''\supset Q'$, but in each case one of the cubes cannot be connected
to $Q$ because another one is on the way.

As usual, we can define the graph distance $d_{\s}(Q',Q'')$ between two
cubes $Q',Q''\in\s$ as the number of graph edges
in the shortest path from $Q'$ to $Q''$ (if there is no such path, we put
$d(Q',Q'')=+\infty$).

Note that if $Q'\supset Q''$, then we can ascend from $Q''$ to $Q'$ in the
full dyadic lattice $\md$ in finitely many steps, each of which goes from
a cube to its parent. Since the total ascent is finite, we can meet only
finitely many cubes in $\s$ on the way. Moreover, any path from
$Q''$ to $Q'$ in $\G_{\md}$ goes through every cube in this ascent.
These three simple observations have several useful implications, which we will
now state for future reference.

\medskip

1) If $Q',Q''\in\s$ and $Q'\supsetneqq Q''$, then $d_{\s}(Q',Q'')$ is finite
and equals $1$ plus the number of intermediate cubes in $\s$ between $Q'$
and $Q''$.

\medskip

2) Let $R\in\md$. Assume that there is at least one cube in $\s$ containing $R$.
Then we shall call the smallest
dyadic cube in $\s$ containing $R$ the
$\s$-roof of $R$ and denote it by $\widehat R_{\s}$.
Note that $\widehat R_{\s}$ is also the first cube in $\s$
we meet on the upward path in $\Gamma_\md$ starting from $R$.

The cubes $R\in\md$ that have no $\s$-roof are contained in no cube from
$\s$.
All other cubes naturally split into houses
$$H_\s(Q)=\{R\in \md:\widehat R_{\s}=Q\}$$
of cubes located under the same roof $Q\in \s$ (see Figure \ref{houses}). The cubes in $H_\s(Q)$ are the
cubes one can reach on the way down from $Q$ in $\Gamma_\md$ before meeting another cube in
$\s$.
Note that each cube $R\in\s$ is its own roof and the next cube in $\s$ on
the way up from $R$ (the cube connected to $R$ by an edge in $\G_{\s}$)
is not the $\s$-roof of $R$, but the $\s$-roof of the parent of $R$.

\begin{figure}[ht]
\begin{center}
\includegraphics[width=8cm,height=6cm]{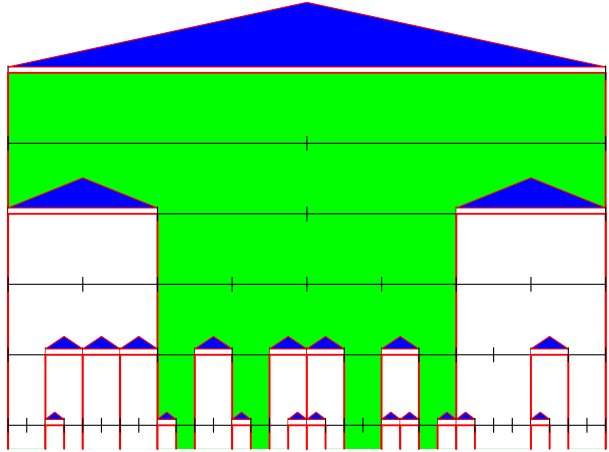}
\caption[]{The roofs $Q\in\s$ (red intervals under blue triangles) and the houses $H_\s(Q)$ (with red walls). The house
of the top interval is highlighted in green.}
\label{houses}
\end{center}
\end{figure}

3) If for every compact set $K\subset {\mathbb R}^n$ there exists a cube
$Q\in\s$ containing $K$ (we will call such families regular), then $\G_{\s}$
is connected and $\md=\cup_{Q\in\s}H_\s(Q)$.

\medskip

4) If $Q'\supset Q''$ and $Q',Q''\in\s'\subset \s$, then
$d_{\s'}(Q',Q'')\le d_{\s}(Q',Q'')$.
This is also true for the case when the cubes are disjoint, provided that
$d_{\s'}(Q',Q'')<+\infty$, but we will never need this fact, so we leave
its proof
to the reader.

\medskip

5) If $Q,Q'\in\s$, $R\in H_\s(Q)$, and $R\supset Q'$, then $d_{\md}(R,Q')\ge
d_{\s}(Q,Q')$. This follows from the fact that to reach $R$ from $Q'$
one needs to pass through all intermediate cubes $Q''\in\s$ on the way up.

\medskip

6) If $Q\in{\mathcal S}$ and $Q_1,\dots,Q_m\in {\mathcal S}$ are some subcubes of $Q$
with $d_{\s}(Q,Q_1)=\dots=d_{\s}(Q,Q_m)$, then the cubes $Q_1,\dots,Q_m$ are pairwise disjoint.

\section{Stopping times and augmentation}
\subsection{Stopping times}
Suppose that we have some condition (a boolean function ${\mathcal P}:\{(Q,Q')$
$\in {\mathscr D}\times {\mathscr D}:Q\supset Q'\}\to \{\text{true},\text{false}\}$) that may hold or not hold for every pair of dyadic cubes $Q\supsetneqq Q'$, but such that ${\mathcal P}(Q,Q)$ is always true.

Take any cube $Q\subset {\mathscr D}$. We say that a cube $Q'\subset Q$ can be reached from $Q$ in one step if ${\mathcal P}(Q,Q')$ fails
but ${\mathcal P}(Q,Q'')$ holds for all $Q''$ with $Q'\subsetneqq Q''\subset Q$. We say that a cube $Q'\subset Q$ can be reached from $Q$
in $m$ steps if there is a chain $Q=Q_0\supset Q_1\supset\dots\supset Q_m=Q'$ in which $Q_{j+1}$ can be reached in one step from $Q_j\,
(j=0,1,\dots,m-1)$.
The system of all cubes that can be reached from $Q$ in finitely many steps (including $Q$, which can be reached from itself in $0$ steps) is called the system of stopping cubes associated with the initial cube
$Q$ and condition ${\mathcal P}$ and denoted $\text{stop}(Q,{\mathcal P})$ or, merely, $\text{stop}(Q)$ if the condition is clear from the context.

\begin{figure}[ht]
\begin{center}
\includegraphics[width=10cm,height=5cm]{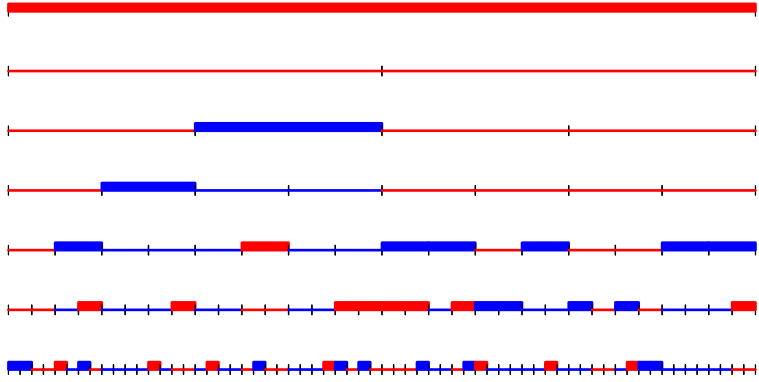}
\caption[]{The system $\text{stop}(Q,\mathcal P)$ (thick intervals) for the condition ``$Q$ and $Q'$ are of the same color''.}
\label{stopQP}
\end{center}
\end{figure}

The graph $\Gamma_{\text{stop}(Q,{\mathcal P})}$ is a tree with the top cube $Q$.  The cubes $Q'$ that can be reached from $Q$ in $m$ steps
are exactly those with $d_{\text{stop}(Q,{\mathcal P})}(Q,Q')=m$.

The most important feature of the system $\text{stop}(Q,{\mathcal P})$ is that
for every dyadic subcube  $R\subset Q$, its roof $\widehat R_{\text{stop}(Q,{\mathcal P})}$ is well-defined and satisfies
${\mathcal P}(\widehat R_{\text{stop}(Q,{\mathcal P})},R)=\text{true}$. Indeed, since $Q\in\text{stop}(Q)$, and
$R\subset Q$, the $\text{stop}(Q)$-roof of $R$ exists. On the other hand, if we descend from $\widehat R_{\text{stop}(Q,{\mathcal P})}$ to $R$, the condition ${\mathcal P}(\widehat R_{\text{stop}(Q,{\mathcal P})},R')$ should hold for all cubes $R'$ on the way (including $R$). Otherwise the first cube $R'$ on that way down for which it fails can be reached from $\widehat R_{\text{stop}(Q,{\mathcal P})}$ in one step, so it also belongs to $\text{stop}(Q,{\mathcal P})$ and contains $R$. By the definition of the roof, we then must
have $R'=\widehat R_{\text{stop}(Q,{\mathcal P})}$, but this is impossible because
$\mathcal P( \widehat R_{\text{stop}(Q,{\mathcal P})},\widehat R_{\text{stop}(Q,{\mathcal P})})=\text{true}$.

Thus, the whole set $\mathcal D(Q)$ of subcubes of $Q$ is a disjoint union of the houses $H_{\text{stop}(Q,{\mathcal P})}(Q')$,
$Q'\in \text{stop}(Q,{\mathcal P})$ and for every $Q'\in\text{stop}(Q,{\mathcal P})$ and $R \in H_{\text{stop}(Q,{\mathcal P})}(Q')$, the
condition $\mathcal P(Q',R)$ holds.

Another useful property is self-similarity: if $Q'\in \text{stop}(Q,{\mathcal P})$, then the family of all subcubes of $Q'$
in $ \text{stop}(Q,{\mathcal P})$ is exactly $ \text{stop}(Q',{\mathcal P})$.

\subsection{Augmentation}
Suppose that $\s$ is any subset of ${\mathscr D}$ and that with every cube $Q\in \s$ some family ${\mathcal F}(Q)$ of dyadic subcubes of $Q$ is associated
so that $Q\in {\mathcal F}(Q)$.
The augmented family $\widetilde\s$ is then the union $\cup_{Q\in\s}\widetilde{\mathcal F}(Q)$ where $\widetilde{\mathcal F}(Q)$ consists of all cubes
$Q'\in {\mathcal F}(Q)$ that are not contained in any $R\in \s$ with $R\subsetneqq Q$ (see Figure \ref{augmented}).
The augmented family $\widetilde \s$ always contains the original family $\s$ but, in general, can be much larger.

\begin{figure}[ht]
\begin{center}
\includegraphics[width=10cm,height=5cm]{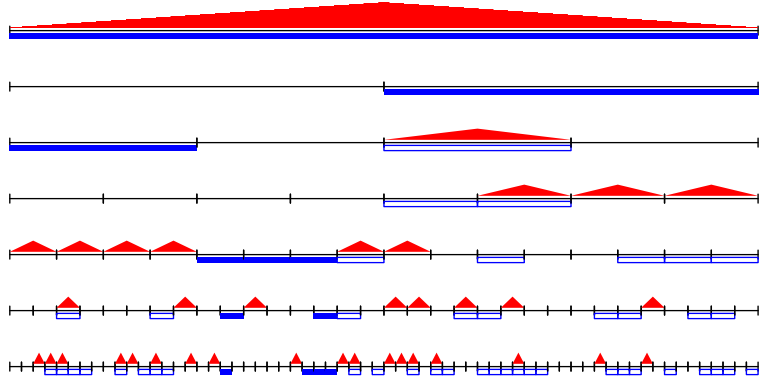}
\caption[]{The families $\mathcal F(Q) $ (all blue rectangles) and $\wt{\mathcal F}(Q)$ (solid blue rectangles)
for an interval $Q$ (the top interval) in $\s$ (red triangles).}
\label{augmented}
\end{center}
\end{figure}

An important case of augmentation is when ${\mathcal F}(Q)=\text{stop}(Q,{\mathcal P})$ for some condition ${\mathcal P}$.
In this case, the augmented system $\widetilde\s$ contains $\s$ and enjoys the property that ${\mathcal P}(\widehat R_{\widetilde\s},R)$ holds
for every $R\in {\mathscr D}$. Indeed, $\widehat R_{\widetilde\s}\in \text{stop}(Q,P)$ for some $Q\in\s$. Then $\widehat R_{\widetilde\s}=
\widehat R_{\text{stop}(Q,P)}$ because the only way the two could be different is that $\widehat R_{\text{stop}(Q,P)}$ would be removed as a cube
contained in some $Q'\subsetneqq Q$ from $\s$, but in that case $Q'$ would block all ways up from $R$ to $\widetilde{\mathcal F}(Q)$ as well.

\section{Sparse and Carleson families}
Let ${\mathscr D}$ be a dyadic lattice.
\begin{definition}\label{sparse2}
Let $0<\eta<1$. A collection of cubes ${\mathcal S}\subset {\mathscr D}$ is called $\eta$-sparse if
one can choose pairwise disjoint measurable sets $E_Q\subset Q$ with $|E_Q|\ge \eta|Q|$ ($Q\in {\mathcal S}$).
\end{definition}

\begin{definition}\label{carl}
Let $\La>1$. A family of cubes ${\mathcal S}\subset {\mathscr D}$ is called $\La$-Carleson if for every cube $Q\in {\mathscr D} $
we have
$$\sum_{P\in {\mathcal  S}, P\subset Q}|P|\le \La|Q|.$$
\end{definition}

\subsection{The equivalence of the Carleson and sparseness conditions}
It is almost obvious that every $\eta$-sparse family is $\frac{1}{\eta}$-Carleson.
Namely, we can just write
$$
\sum_{P\in {\mathcal  S}, P\subset Q}|P|\le
\eta^{-1}\sum_{P\in {\mathcal  S}, P\subset Q}|E_P|
\le \eta^{-1}|Q|\,.
$$
That every $\La$-Carleson family is $\frac{1}{\La}$-sparse is, however, much less obvious
(which was one of the reasons for the duplication of terminology in the first place), so our first goal will be to show exactly that.

\begin{lemma}
If ${\mathcal S}$ is $\La$-Carleson, then ${\mathcal S}$ is $\frac{1}{\La}$-sparse.
\end{lemma}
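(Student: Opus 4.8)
Since the text has just checked the easy implication (sparse $\Rightarrow$ Carleson), it remains, given a $\Lambda$-Carleson family $\mathcal S$, to produce pairwise disjoint $E_Q\subset Q$ with $|E_Q|\ge\frac1\Lambda|Q|$. My plan is to get them with the \emph{exact} size $|E_Q|=\frac1\Lambda|Q|$; insisting on equality (not merely ``$\ge$'') is what keeps the accounting tight. The one inequality that drives everything is: applying the Carleson condition to a cube $Q$ \emph{itself}, the total ``demand'' of its proper $\mathcal S$-subcubes is $\sum_{P\in\mathcal S,\,P\subsetneq Q}\frac1\Lambda|P|\le\frac1\Lambda(\Lambda|Q|-|Q|)=(1-\frac1\Lambda)|Q|$, which leaves inside $Q$ exactly enough room for $E_Q$.

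First I would dispose of a \emph{finite} family $\mathcal S$ by induction on the length $d$ of the longest chain of cubes in $\mathcal S$. If $d\le1$ the cubes are pairwise disjoint and any $E_Q\subset Q$ with $|E_Q|=\frac1\Lambda|Q|$ will do. For the step, recall that a subfamily of a $\Lambda$-Carleson family is $\Lambda$-Carleson, and that the maximal cubes of $\mathcal S$ are pairwise disjoint and cut $\mathcal S$ into the subfamilies sitting inside them; treating these separately, we may assume $\mathcal S$ has a single largest cube $Q_0$, so $\mathcal S\subset\mathcal D(Q_0)$. Then $\mathcal S\setminus\{Q_0\}$ has longest chain $\le d-1$ (append $Q_0$ on top of any chain in it), so by the inductive hypothesis it carries disjoint $E_Q\subset Q$ ($Q\ne Q_0$) of size exactly $\frac1\Lambda|Q|$; these all lie in $Q_0$ and, by the demand bound above, their total measure is $\le(1-\frac1\Lambda)|Q_0|$, so a subset of $Q_0$ of measure $\ge\frac1\Lambda|Q_0|$ is left over, and we carve $E_{Q_0}$ out of it.

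For an arbitrary (necessarily countable) $\Lambda$-Carleson $\mathcal S$ I would run the same idea scale by scale. Reducing again (via the forest structure) to $\mathcal S\subset\mathcal D(Q_0)$ with $Q_0\in\mathcal S$, each $Q\in\mathcal S$ has a \emph{finite} forest distance $\delta(Q)=d_{\mathcal S}(Q_0,Q)$, and I would define the $E_Q$ by recursion on $\delta(Q)$, writing $U_m=\bigcup_{\delta(Q)\le m}E_Q$ and keeping the invariant that every not-yet-reached cube $R\in\mathcal S$ (those with $\delta(R)>m$) still has free room $|U_m\cap R|\le|R|-\frac1\Lambda\sum_{P\in\mathcal S,\,P\subseteq R}|P|$ (vacuous at the start, since $\frac1\Lambda\sum_{P\subseteq R}|P|\le|R|$ by the Carleson condition). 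Granting the invariant at level $m$, for each $Q$ with $\delta(Q)=m+1$ one must place $E_Q\subset Q\setminus U_m$ of measure exactly $\frac1\Lambda|Q|$ while not encroaching by more than the allotted amount into any $\mathcal S$-cube $R\subsetneq Q$; this then restores the invariant at level $m+1$, and the union over all $m$ is the desired system. (Alternatively, the infinite case can be deduced from the finite one by a weak-$*$ compactness argument on the indicators $\mathbf 1_{E_Q}$ over an exhaustion of $\mathcal S$ by finite subfamilies, at the mild cost of an auxiliary unit-interval factor to round the limiting fractional allocation back into honest disjoint sets.)

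The one genuinely nonroutine step — and the place I expect the real work — is this per-scale placement: fitting $\frac1\Lambda|Q|$ units of mass into $Q\setminus U_m$ under a ceiling on the mass allowed inside each cube of the laminar family $\{R\in\mathcal S:R\subsetneq Q\}$. By the min-cut description of laminar capacity constraints, feasibility amounts to checking, for every antichain $\mathcal A$ in that family, that the available room is $\ge\frac1\Lambda|Q|$; expanding the ceilings and using the invariant for $Q$, the available room collapses to $|Q|-|U_m\cap Q|-\frac1\Lambda\sum_{P\in\mathcal S,\,P\subseteq\bigcup\mathcal A}|P|$, which is $\ge\frac1\Lambda|Q|$ again by the Carleson bound. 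Everything else is bookkeeping once this is in place.
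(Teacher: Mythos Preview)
Your plan differs substantially from the paper's, and the core accounting (the ``demand'' inequality, and the min-cut check for the laminar feasibility at each level) is correct. The trouble is the passage from finite to infinite $\mathcal S$; both routes you sketch have a real gap there.

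For the top-down recursion: the reduction ``via the forest structure to $\mathcal S\subset\mathcal D(Q_0)$ with $Q_0\in\mathcal S$'' does not go through, because a connected component of $\Gamma_{\mathcal S}$ need not have a maximal cube. Any infinite ascending chain $Q_0\subsetneqq Q_1\subsetneqq\cdots$ in $\mathscr D$ is $\tfrac{2^n}{2^n-1}$-Carleson and connected, yet has no top, so your recursion on $\delta(Q)=d_{\mathcal S}(Q_0,Q)$ has nowhere to start. For the compactness alternative: the weak-$*$ limits $g_Q$ of the indicators are only fractional ($0\le g_Q\le\mathbf 1_Q$, $\sum_Qg_Q\le 1$, $\int g_Q=\tfrac1\Lambda|Q|$), and your ``auxiliary unit-interval factor'' rounds them to disjoint sets in $Q\times[0,1]\subset\mathbb R^{n+1}$, not in $Q\subset\mathbb R^n$. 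Converting such a fractional allocation back to honest disjoint subsets of the cubes in $\mathbb R^n$ is essentially the content of the lemma itself, so this route is circular as written; the cost is not ``mild''.

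The paper avoids both issues by going \emph{bottom-up} rather than top-down. It truncates $\mathcal S$ at generation $\mathscr D_K$ and builds the sets moving upward through the generations (each new $E_Q$ need only avoid what has already been placed strictly below it, and your demand inequality shows there is room---so no laminar feasibility problem arises at all). The point is then to make the limit $K\to\infty$ exist, and the device is a \emph{canonical} geometric choice at every step: always grow a half-open cube outward from the corner $x(Q)$ until the target measure is hit. This forces the auxiliary sets $\widehat E_Q^{(K)}=E_Q^{(K)}\cup\bigcup_{R\subsetneqq Q}E_R^{(K)}$ to be monotone in $K$, so the limit is simply their union---no compactness, no rounding.
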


\begin{proof}
It would be easy to construct the sets $E_Q$ if the lattice had a bottom layer ${\mathscr D}_K$. Then we would start with considering all cubes $Q\in {\mathcal S}\cap {\mathscr D}_K$
and choose any sets $E_Q\subset Q$ of measure $\frac{1}{\La}|Q|$ for them. After that we would just go up layer by layer and for each cube $Q\in {\mathcal S}\cap  {\mathscr D}_k,k\le K,$
choose a subset $E_Q$ of measure $\frac{1}{\La}|Q|$ in $Q\setminus \cup_{R\in\s,R\subsetneqq Q}E_R$. Note that for every
$Q\in\mathcal S$, we have
$$\Big|\bigcup_{R\in {\mathcal S},R\subsetneqq Q}E_R\Big|\le \frac{1}{\La}\sum_{R\in {\mathcal S},R\subsetneqq Q}|R|\le \frac{\La-1}{\La}|Q|=\Big(1-\frac{1}{\La}\Big)|Q|,$$
so such choice is always possible.

The problem with the absence of a bottom layer is that going down in a similarly simple way is not feasible. A natural idea is to run the above construction for each
$K=0,1,2,\dots$ and then pass to the limit, but we need to make sure that the resulting subsets do not jump around wildly.
Fortunately, it is not that hard.  All we have to do is to replace ``free choice'' with ``canonical choice''.

Fix $K\ge 0$. For $Q\in {\mathcal S}\cap (\cup_{k\le K}{\mathscr D}_k)$ define the sets $\widehat E_Q^{(K)}$ inductively as follows.
 If $Q\in {\mathcal S}\cap {\mathscr D}_K$, set $\widehat E_Q^{(K)}=x(Q)+{\La}^{-1/n}(Q-x(Q))$
(the cube with the same corner as $Q$ of measure $|\widehat E_Q^{(K)}|=\frac{1}{\La}|Q|$). For  $Q\in {\mathcal S}\cap {\mathscr D}_k, k<K$, the sets $\widehat E_Q^{(K)}$
will not necessarily be cubes. Namely, if $\widehat E^{(K)}_R$ are already defined for $R\in  {\mathcal S}\cap (\cup_{k+1\le i\le K}{\mathscr D}_i)$, then for $Q\in
 {\mathcal S}\cap {\mathscr D}_k$, set
$$\widehat E^{(K)}_Q=F^{(K)}_Q\cup\big(x(Q)+t(Q-x(Q))\big),$$
where
$$
F^{(K)}_Q=\bigcup_{R\in {\mathcal S},R\subsetneqq Q}\widehat E^{(K)}_R$$
and $t\in [0,1]$ is the largest number for which the set
$$E^{(K)}_Q=[x(Q)+t(Q-x(Q))]\setminus F^{(K)}_Q$$
satisfies $|E_Q^{(K)}|\le \frac{1}{\Lambda}|Q|$ (see Figure \ref{sand}).

\begin{figure}[ht]
\begin{center}
\includegraphics[width=6cm,height=6cm]{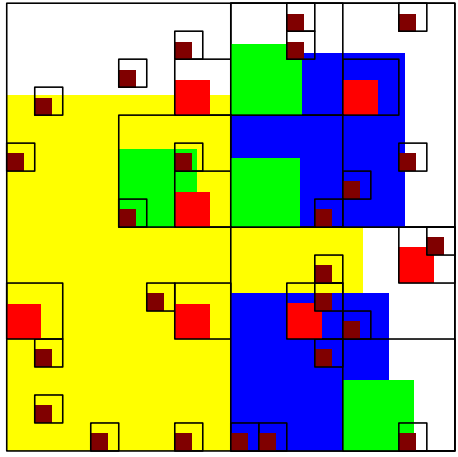}
\caption[]{The construction from the bottom level (brown) to $4$ levels up (yellow). For the largest cube $Q\in \s$ shown, the set
$\widehat E^{(K)}_Q$ is the total colored area, the set $E^{(K)}_Q$ is the yellow area, and the set $F^{(K)}_Q$ is the  area
colored with colors other than yellow.}
\label{sand}
\end{center}
\end{figure}

The above considerations (together with the continuity and monotonicity of the function
$t\mapsto |[x(Q)+t(Q-x(Q))]\setminus F^{(K)}_Q|$) imply that such $t$ exists
and we have $|E_Q^{(K)}|=\frac{1}{\Lambda}|Q|$.

We claim that $\widehat E^{(K)}_Q\subset\widehat  E^{(K+1)}_Q$  for every $Q\in {\mathcal S}\cap (\cup_{k\le K}{\mathscr D}_k)$. Indeed, if $Q\in\md_K$, then $\widehat E^{(K)}_Q$ is just the cube with corner $x(Q)$ of volume $\frac 1\Lambda|Q|$. On the other hand, $\widehat E^{(K+1)}_Q$ contains a cube with the same corner for which the volume of the difference between that cube and
some other set is as large. It remains to note that out of two cubes with the same corner the cube of larger volume contains
the cube of smaller volume.

From here we can proceed by backward induction. Assume that $\widehat E^{(K)}_Q\subset\widehat  E^{(K+1)}_Q$ for every $Q\in {\mathcal S}\cap (\cup_{k<i\le K}{\mathscr D}_i)$. Take any $Q\in \mathcal S\cap \md_k$. Then, the induction assumption
implies that $ F^{(K)}_Q\subset F^{(K+1)}_Q$. Let $Q'=x(Q)+t'(Q-x(Q))$ be the cube added to $ F^{(K)}_Q$ when constructing
$ \widehat E^{(K)}_Q$. Then
$$
|Q'\setminus  F^{(K+1)}_Q|\le |Q'\setminus F^{(K)}_Q|=\frac 1\Lambda|Q|
$$
so the value of $t$ that is chosen in the construction of $\widehat E^{(K+1)}_Q$ is not less than $t'$. Thus, the cube that we add
to $F^{(K+1)}_Q$ when building $\widehat E^{(K+1)}_Q$ contains the cube that we add
to $F^{(K)}_Q$ when building $\widehat E^{(K)}_Q$, and the claim follows.


Now, for $Q\in {\mathcal S}\cap \md_k$, define
$$
\widehat E_Q=\lim_{K\to \infty}\widehat E^{(K)}_Q=\bigcup_{K=k}^{\infty}\widehat E^{(K)}_Q\subset Q\,.
$$
For each $K$, we have
$$|E^{(K)}_Q|=|\widehat E^{(K)}_Q\setminus F^{(K)}_Q|=\frac{1}{\Lambda}|Q|\,.$$
Note now that the sets $F^{(K)}_Q$ also form an increasing (in $K$) sequence, so for each $Q\in\s$,  the limit set
$$E_Q=\lim_{K\to\infty}E_Q^{(K)}=\widehat E_Q\setminus\Bigl(\lim_{K\to \infty} F^{(K)}_Q\Bigr)
=\widehat E_Q\setminus\Bigl(\bigcup_{R\in\s,R\subsetneqq Q} \widehat E_R\Bigr)
$$
exists, is contained in $Q$ and has the required measure. Finally, $E_Q$ are, obviously, disjoint.
\end{proof}

The sparseness property is something that can be readily used when working with systems of cubes that are already known to be
sparse while the Carleson property is something that can be easily verified in many cases where the sparseness condition is not
obvious at all. For example, it is clear straight from the definition that the union of $N$ Carleson systems with constants $\Lambda_1,\dots,\Lambda_N$
is a Carleson system with constant $\Lambda_1+\dots+\Lambda_N$, while to see directly that the union of $\eta_j$-sparse systems is $(\sum_{j=1}^N\eta_j^{-1})^{-1}$-sparse is next to impossible.

\subsection{Anti-Carleson stack criterion}
We will now present a simple criterion for a system of cubes to be Carleson.
\begin{definition}
Let $Q\in {\mathscr D}$ and $\eta\in (0,1)$. An $\eta$-anti-Carleson stack of height $M$ with the top cube $Q$ is a finite family ${\mathcal F}\subset {\mathcal D}(Q)$
containing $Q$ and such that
$$\sum_{Q'\in {\mathcal F}, d_{\mathcal F}(Q',Q)=M}|Q'|\ge \eta|Q|.$$
\end{definition}

Figure \ref{anticarl} depicts an anti-Carleson stack of hight $3$ with $\eta\approx 0.7$. Note that the picture
is layered primarily according to the distance $d_{\mathcal F}$, so some longer intervals are drawn lower than some
shorter ones, which is opposite to how they would appear in the standard generation by generation layout of $\md$.

\begin{figure}[ht]
\begin{center}
\includegraphics[width=9cm,height=6cm]{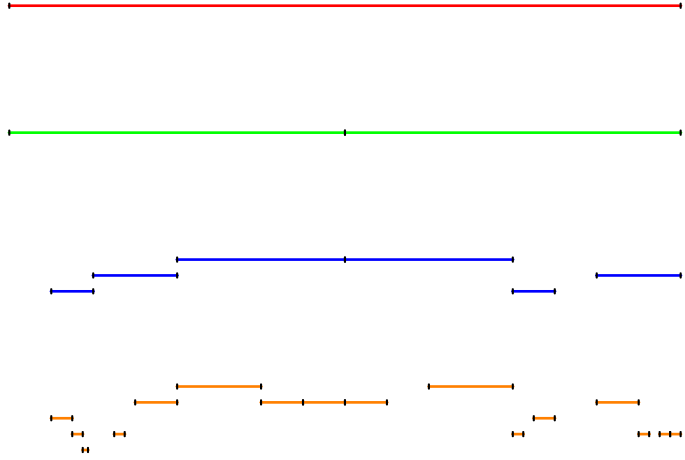}
\caption[]{An anti-Carleson stack $\mathcal F$ of height $3$ with the top interval $Q$ shown in red. The intervals
$Q'\in \mathcal F$ with $d_{\mathcal F}(Q,Q')=1,2,3$ are shown in green, blue, and orange respectively.}
\label{anticarl}
\end{center}
\end{figure}

Note that if $\s\subset\md$ is some family of cubes and we have an $\eta$-anti-Carleson stack ${\mathcal F}\subset \s$ of height $M$ with the top cube $Q$, then
$$\widetilde{\mathcal F}=\{Q'\in\s:Q'\subset Q, d_{\s}(Q,Q')\le M\}$$
is also an $\eta$-anti-Carleson stack of height $M$. This is an immediate consequence of 3 observations:
\begin{enumerate}[{1)}]
\item
since ${\mathcal F}\subset \s$, every cube $Q'\in {\mathcal F}$ with $d_{\mathcal F}(Q,Q')=M$ is contained in some $Q''\in\wt{\mathcal F}$
with $d_{\s}(Q,Q'')=M$;
\item
for every $Q''\in \wt{\mathcal F}$, $d_{\s}(Q,Q'')=d_{\wt{\mathcal F}}(Q,Q'')$;
\item
in any family of dyadic subcubes of $Q$, the cubes with some fixed distance to the top cube $Q$ are pairwise disjoint.
\end{enumerate}
Hence,
\begin{eqnarray*}
\eta|Q|&\le& \sum_{Q'\in {\mathcal F}, d_{\mathcal F}(Q,Q')=M}|Q'|=\Big|\bigcup_{Q'\in {\mathcal F}, d_{\mathcal F}(Q,Q')=M}Q'\Big|\\
&\le& \Big|\bigcup_{Q''\in\widetilde{\mathcal F}, d_{\s}(Q,Q'')=M}Q''\Big|=
\sum_{Q''\in\widetilde{\mathcal F}, d_{\wt{\mathcal F}}(Q,Q'')=M}|Q''|.
\end{eqnarray*}

Our next observation is that
for any fixed $\eta>0$, a Carleson family cannot contain an
$\eta$-anti-Carleson stack ${\mathcal F}$  of  arbitrarily large height $M$.
Indeed, since for $k\ge 1$, each cube $Q'$ with $d_{\mathcal F}(Q,Q')=k$ is contained in some $Q''$ with $d_{\mathcal F}(Q,Q'')=k-1$, we have
$$\sum_{Q'\in {\mathcal F}, d_{\mathcal F}(Q,Q')=k}|Q'|\ge \eta|Q|$$
for every $k=0,1,\dots,M$, from where it is clear that if ${\mathcal F}$ is contained in a  $\Lambda$-Carleson family of cubes, we must have  $(M+1)\eta\le \Lambda$. 

We will now prove a statement going in the opposite direction.

\begin{lemma}\label{noeta}
Let $\eta\in (0,1)$. If ${\mathcal S}\subset{\mathscr D}$ contains no $\eta$-anti-Carleson stack
of height $M$, then $\s$ is $\frac{M}{1-\eta}$-Carleson.
\end{lemma}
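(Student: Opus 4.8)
The plan is to fix an arbitrary cube $Q_0 \in {\mathscr D}$ and bound $\sum_{P \in {\mathcal S}, P \subset Q_0} |P|$ by $\frac{M}{1-\eta}|Q_0|$, using the hypothesis that no $\eta$-anti-Carleson stack of height $M$ sits inside ${\mathcal S}$. The natural device is the forest/house machinery from Section 4 applied to the family ${\mathcal S}_0 = \{P \in {\mathcal S} : P \subset Q_0\}$ together with the graph distance $d_{{\mathcal S}_0}$ from $Q_0$: group the cubes $P \in {\mathcal S}_0$ according to the value $d_{{\mathcal S}_0}(Q_0, P) \bmod M$ (or, better, according to $\lceil d_{{\mathcal S}_0}(Q_0,P)/M \rceil$; I will decide below which slicing is cleaner). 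Within each "slab" of $M$ consecutive distance-levels we want to show that the total volume is controlled, and then sum the geometric-type bound over slabs. By observation 6) in Section 4, cubes at a fixed graph distance from $Q_0$ are pairwise disjoint, so the volume at each single level $j$ satisfies $\sum_{d_{{\mathcal S}_0}(Q_0,P)=j}|P| \le |Q_0|$ trivially; the real content is to get decay as $j$ grows.

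The key step is the following: for each cube $R \in {\mathcal S}_0$, consider the cubes $P \in {\mathcal S}_0$ with $P \subseteq R$ and $d_{{\mathcal S}_0}(R,P) = M$ (the "$M$-th generation descendants of $R$ in the graph $\Gamma_{{\mathcal S}_0}$"). These are pairwise disjoint subcubes of $R$, and the subfamily consisting of $R$ together with all graph-descendants of $R$ within distance $M$ is exactly an anti-Carleson-stack candidate of height $M$ with top $R$. Since ${\mathcal S}$ (hence ${\mathcal S}_0$) contains no $\eta$-anti-Carleson stack of height $M$, we must have
$$\sum_{P \in {\mathcal S}_0,\ P \subseteq R,\ d_{{\mathcal S}_0}(R,P)=M} |P| < \eta |R|.$$
(Here I use self-similarity of distances: $d_{{\mathcal S}_0}$ restricted to subcubes of $R$ in ${\mathcal S}_0$ and measured from $R$ behaves correctly — this is the analogue of the self-similarity of stopping systems and of observations 1), 4) in Section 4.) This is the contraction estimate: passing $M$ graph-steps down from any cube loses at least a factor $1-\eta$ in total volume.

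Now iterate. Let $V_m = \sum_{P \in {\mathcal S}_0,\ d_{{\mathcal S}_0}(Q_0,P) = mM} |P|$ for $m = 0, 1, 2, \dots$ (with $V_0 = |Q_0|$ if $Q_0 \in {\mathcal S}$, and $V_0 \le |Q_0|$ in general). Every cube counted in $V_{m+1}$ lies graph-distance $M$ below exactly one cube counted in $V_m$ — more precisely it lies below some cube $R$ with $d_{{\mathcal S}_0}(Q_0,R)=mM$ at graph distance $M$ from that $R$ — so summing the contraction estimate over all such $R$ gives $V_{m+1} \le \eta V_m$, hence $V_m \le \eta^m |Q_0|$. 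Finally, the full sum splits over the distance-levels $j = mM, mM+1, \dots, mM + (M-1)$; for each of the $M$ residues the level-volume is at most $V_m$ (descending a few more graph-steps cannot increase the total volume at a fixed level, since fixed-distance cubes are disjoint and each is contained in one of the $V_m$-cubes). Therefore
$$\sum_{P \in {\mathcal S},\ P \subset Q_0} |P| = \sum_{m \ge 0}\ \sum_{j=mM}^{mM+M-1}\ \sum_{d_{{\mathcal S}_0}(Q_0,P)=j}|P| \le \sum_{m \ge 0} M V_m \le M |Q_0| \sum_{m \ge 0} \eta^m = \frac{M}{1-\eta}|Q_0|,$$
which is exactly the claimed $\frac{M}{1-\eta}$-Carleson bound.

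The main obstacle I anticipate is not any single inequality but the bookkeeping of graph distances: making rigorous that "descending $M$ graph-steps from a cube and summing over the top cubes of a given level" correctly partitions the next level, i.e. that each $P$ with $d_{{\mathcal S}_0}(Q_0,P) = (m+1)M$ is counted under exactly one $R$ with $d_{{\mathcal S}_0}(Q_0,R) = mM$ and $d_{{\mathcal S}_0}(R,P) = M$. This needs the fact (observation 5), and the "every path goes through every intermediate cube" remark preceding observations 1)–6)) that graph distance along a nested chain is additive: if $P \subseteq R \subseteq Q_0$ are all in ${\mathcal S}_0$ then $d_{{\mathcal S}_0}(Q_0,P) = d_{{\mathcal S}_0}(Q_0,R) + d_{{\mathcal S}_0}(R,P)$. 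Once that additivity is in hand, taking $R$ to be the unique ancestor of $P$ in ${\mathcal S}_0$ at graph distance $mM$ from $Q_0$ (it exists and is unique because the ascent from $P$ meets the cubes of ${\mathcal S}_0$ in a definite order) does the partitioning, and the rest is the geometric sum above.
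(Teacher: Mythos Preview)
Your argument is correct (with two minor caveats below), but it packages the key contraction differently from the paper. The paper first reduces to an arbitrary \emph{finite} subfamily $\s'\subset\s$, lets $\Lambda$ be its (automatically finite) best Carleson constant, and for each $Q\in\s'$ writes
\[
\sum_{P\in\s',\,P\subset Q}|P|\;\le\; M|Q|\;+\;\Lambda\!\!\sum_{R:\,d_{\s'}(Q,R)=M}\!\!|R|\;\le\; M|Q|+\eta\Lambda|Q|,
\]
obtaining the self-improving inequality $\Lambda\le M+\eta\Lambda$, i.e.\ $\Lambda\le\frac{M}{1-\eta}$, in one stroke. You instead iterate the same contraction to get $V_m\le\eta^m|Q_0|$ and sum the resulting geometric series; the two routes encode the same decay. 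What the paper's reduction to finite $\s'$ buys is cleaner bookkeeping on exactly the points you flag: it makes the candidate stack $\{P:d(R,P)\le M\}$ genuinely finite (the definition of an anti-Carleson stack requires this; your family need not be, so strictly you must extract a finite subfamily before invoking the hypothesis, and your ``$<\eta|R|$'' should really be ``$\le\eta|R|$''), and it also proves separately that the Carleson sum need only be checked at top cubes $Q\in\s$, which disposes of the case $Q_0\notin\s$ where your $d_{\mathcal{S}_0}(Q_0,\cdot)$ is not defined.
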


\begin{proof}
First of all, notice that ${\s}$ is $\Lambda$-Carleson if and only if every finite subsystem $\s'$ of $\s$ is $\Lambda$-Carleson.
Second, notice that the $\Lambda$-Carleson condition for $\s$ can be checked for cubes $Q\in\s$ only. Indeed, if it holds for every
cube $Q\in \s$, take any cube $R\in {\mathscr D}$ and consider the family ${\mathcal F}$ of all cubes $Q\in {\mathcal S}$ with $Q\subset R$
that are not contained in any other cube $Q'\in\s$ contained in $R$. Those cubes are disjoint and every other subcube of $R$ from $\s$
is contained in one of them, so we can write
\begin{eqnarray*}
\sum_{W\in\s,W\subset R}|W|&=&\sum_{Q'\in{\mathcal F}}\sum_{W\in\s,W\subset Q'}|W|\\
&\le& \sum_{Q'\in{\mathcal F}}\Lambda|Q'|
=\Lambda\Big|\bigcup_{Q'\in{\mathcal F}}Q'\Big|\le\Lambda|R|,
\end{eqnarray*}
establishing the $\Lambda$-Carleson property for every $R\in {\mathscr D}$.

Now take any finite $\s'\subset\s$. By the assumption of the lemma, $\s'$ contains no $\eta$-anti-Carleson stack of height $M$.
Observe that every finite family is Carleson (with Carleson constant not exceeding the number of cubes in the family, say).
Let $\Lambda$ be the best Carleson constant for $\s'$.
Let $Q\in \s'$. Let ${\mathcal F}_k(Q)$ be the set of all subcubes $Q'\in\s'$ of $Q$ with $d_{\s'}(Q,Q')=k$. Then
$\sum_{Q'\in {\mathcal F}_M}|Q'|\le \eta|Q|$ and every cube from any ${\mathcal F}_k$ with $k\ge M$ is contained in some $Q'\in {\mathcal F}_M$.
Thus, taking into account that the cubes in ${\mathcal F}_k$ are disjoint for each $k$, we get
\begin{eqnarray*}
\sum_{R\in \s',R\subset Q}|R|&=&\sum_{k=0}^{M-1}\sum_{R\in {\mathcal F}_k}|R|+\sum_{Q'\in {\mathcal F}_M}\sum_{R\in \s',R\subset Q'}|R|\\
&\le& M|Q|+\Lambda\sum_{Q'\in{\mathcal F}_M}|Q'|\le M|Q|+\eta\Lambda|Q|.
\end{eqnarray*}
Since this inequality holds for all $Q\in \s'$, we get $\Lambda\le M+\eta\Lambda$, i.e., $\Lambda\le \frac{M}{1-\eta}$.
\end{proof}

\subsection{Improving the Carleson constant by partitioning the family}
Some estimates are easier to carry out when the Carleson and sparseness constants are close to 1.
Usually, these estimates are of such nature that if they are obtained for $\s_1,\dots,\s_N$, then they trivially follow for the union
$\s=\s_1\cup\dots\cup\s_N$. So, one may be tempted to try to split a $\Lambda$-Carleson system with large $\Lambda$ into
several $\Xi$-Carleson systems with $\Xi<\Lambda$ (preferably just slightly above 1). The following lemma shows
that it is always possible.

\begin{lemma}
If $\s$ is a $\Lambda$-Carleson system and $m$ is an integer $\ge 2$, then $\s$ can be written as a union of $m$ systems $\s_j$,
each of which is $1+\frac{\Lambda-1}{m}$-Carleson.
\end{lemma}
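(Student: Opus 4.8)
The plan is to colour the cubes of $\s$ with $m$ colours so that the colour advances by one unit modulo $m$ along every downward edge of the graph $\Gamma_\s$. The point of such a colouring is an elementary remark: for a fixed point $x$ and a fixed cube $R\in\md$, the cubes of $\s$ that are contained in $R$ and contain $x$ form a nested chain whose consecutive members are joined by edges of $\Gamma_\s$, hence carry consecutive colours; so in a chain of length $t$ any fixed colour is used at most $\lceil t/m\rceil$ times, i.e.\ it is over-represented by no more than a rounding error. Integrating this pointwise bound against $dx$ converts the $\La$-Carleson estimate for $\s$ into a $\bigl(1+\tfrac{\La-1}{m}\bigr)$-Carleson estimate for each colour class.

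\textbf{The colouring.} First I would build $c$. The graph $\Gamma_\s$ has no cycles (Section~4), so it is a forest. In each connected component pick one cube and assign it an arbitrary residue in $\mathbb Z/m\mathbb Z$; walking along the unique path in the component, one then obtains a well-defined function $c\colon\s\to\mathbb Z/m\mathbb Z$ with $c(Q')=c(Q)+1$ for every edge $\{Q,Q'\}$ of $\Gamma_\s$ with $Q'\subsetneq Q$ — consistency is exactly the absence of cycles. Put $\s_j=\{Q\in\s:c(Q)=j\}$; clearly $\s=\s_1\cup\dots\cup\s_m$.

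\textbf{The pointwise estimate.} Fix $R\in\md$, a colour $j$, and a point $x\in R$, and let $N(x)$ be the number of cubes of $\s$ that are contained in $R$ and contain $x$. These cubes form a nested chain $C_1\subsetneq\dots\subsetneq C_t$ with $t=N(x)$, finite because the sidelengths $\ell_{C_k}$ strictly increase and never exceed $\ell_R$. If some pair $C_k\subsetneq C_{k+1}$ failed to be an edge of $\Gamma_\s$, there would be a cube $D\in\s$ with $C_k\subsetneq D\subsetneq C_{k+1}$; but then $x\in D\subseteq R$, so $D$ would be one of the $C_i$, contradicting consecutiveness. Hence $c(C_1),\dots,c(C_t)$ are $t$ consecutive residues modulo $m$, and $j$ occurs among them at most $\lceil t/m\rceil$ times. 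Thus
$$\#\{Q\in\s_j:Q\subseteq R,\ x\in Q\}\ \le\ \Big\lceil\frac{N(x)}{m}\Big\rceil\ \le\ \frac{N(x)+m-1}{m}\qquad\text{if }N(x)\ge 1,$$
while the left-hand side is $0$ when $N(x)=0$.

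\textbf{Integration and the main obstacle.} Finally I would integrate over $R$. Using $\int_R\mathbf 1_Q\,dx=|Q|$ for $Q\subseteq R$ and interchanging the (countable) sum with the integral,
$$\sum_{Q\in\s_j,\,Q\subseteq R}|Q|=\int_R\#\{Q\in\s_j:Q\subseteq R,\ x\in Q\}\,dx\le\frac1m\int_R N\,dx+\frac{m-1}{m}\bigl|\{x\in R:N(x)\ge1\}\bigr|.$$
Since $\int_R N\,dx=\sum_{Q\in\s,\,Q\subseteq R}|Q|\le\La|R|$ by the Carleson hypothesis and $|\{x\in R:N(x)\ge1\}|\le|R|$, the right-hand side is at most $\tfrac{\La}{m}|R|+\tfrac{m-1}{m}|R|=\bigl(1+\tfrac{\La-1}{m}\bigr)|R|$, which is exactly the Carleson bound needed for $\s_j$, for every $R\in\md$. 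The only delicate point is this rounding step: replacing $\lceil t/m\rceil$ by the cruder $t/m+1$ would only yield the constant $1+\tfrac{\La}{m}$, so one must keep the sharper inequality $\lceil t/m\rceil\le(t+m-1)/m$ and exploit that the count vanishes where $N(x)=0$ (so the ``$\tfrac{m-1}{m}$'' term is multiplied by $|\{N\ge1\}|\le|R|$ and not by something larger). Everything else is routine, and no finiteness or regularity assumption on $\s$ is used.
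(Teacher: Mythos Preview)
Your proof is correct, and the decomposition you use is exactly the paper's: the paper also partitions $\s$ into the classes $\s_j$ determined by $d_\s(\cdot,\cdot)\bmod m$ (what you call the colouring). Where you diverge is in the verification of the Carleson constant for each $\s_j$. The paper first reduces to testing only on cubes $Q\in\s_j$, then stratifies $\{Q'\in\s:Q'\subset Q\}$ into the layers $\mathcal F_k=\{Q':d_\s(Q,Q')=k\}$, uses that $\sum_{\mathcal F_k}|Q'|$ is monotone in $k$, and compares $\sum_k$ with $\sum_{ml}$. You instead fix an arbitrary $R\in\md$, observe that at each point $x\in R$ the cubes of $\s$ through $x$ inside $R$ carry \emph{consecutive} colours, bound the count of colour $j$ by $\lceil N(x)/m\rceil$, and integrate. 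Your route is pleasantly direct: it avoids the preliminary reduction to $Q\in\s_j$ and replaces the layer bookkeeping with a one-line pointwise inequality; the paper's route makes the self-similarity of the layers more visible. Either way the computation collapses to $\frac{\Lambda}{m}+\frac{m-1}{m}$.

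One small imprecision: your reason for $N(x)<\infty$ (``sidelengths strictly increase and never exceed $\ell_R$'') does not by itself rule out an infinite descending chain $\cdots\subsetneq C_{-1}\subsetneq C_0\subseteq R$ through $x$. What actually gives finiteness is the Carleson hypothesis: $\int_R N=\sum_{Q\in\s,\,Q\subseteq R}|Q|\le\Lambda|R|<\infty$, so $N<\infty$ almost everywhere, and that is all your integration step needs. (On the null set where $N(x)=\infty$ your displayed inequality is trivially $\infty\le\infty$.)
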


\begin{proof}
As we saw above, we can check the Carleson condition for any family
$\s'\subset{\mathscr D}$ on the cubes $Q\subset {\s'}$ only.
Now consider the graph $\Gamma_{\s}$. It may be disconnected, but then every 2 cubes from different connected components are disjoint,
so it will suffice to split each connected component into $1+\frac{\Lambda-1}{m}$-Carleson systems separately. Thus, we can assume that $\Gamma_{\s}$
is connected, which means that any upward paths starting from two (or more)
cubes in $\s$ merge at some cube containing both (all) of them.

Now we can define an equivalence relation on $\s$ as follows:
take any two cubes $Q',Q''\in \s$ and find any cube $R\in\s$ that contains both of them. We say that $Q'$ is equivalent to $Q''$ if
$d_{\s}(Q',R)-d_{\s}(Q'',R)$ is divisible by $m$. Note that if we have some other cube $R'\in\s$ containing both $Q'$ and $Q''$, then either
$R\subset R'$, or $R'\subset R$, so
$$d_{\s}(Q',R')=d_{\s}(Q',R)\pm d_{\s}(R,R')$$ respectively and the same is true for $Q''$. Thus, the difference
$d_{\s}(Q',R)-d_{\s}(Q'',R)$ does not depend on the choice of $R\supset Q'\cup Q''$.

\begin{figure}[ht]
\begin{center}
\includegraphics[width=12cm,height=6cm]{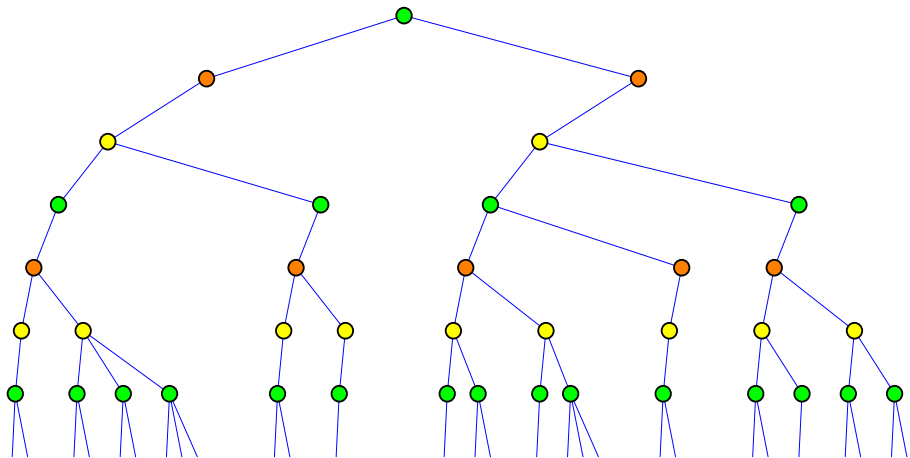}
\caption[]{The equivalence classes (with $m=3$) shown on the graph $\Gamma_{\s}$ in green, orange, and yellow.}
\label{carlsplit}
\end{center}
\end{figure}

Let $\s'\subset\s$ be one of the equivalence classes with respect to this relation. Take $Q\in \s'$ and consider the families
$${\mathcal F}_k=\{Q'\in\s:Q'\subset Q, d_{\s}(Q',Q)=k\}.$$ Then, by definition, the set $\{Q'\in \s':Q'\subset Q\}$ is the union
of ${\mathcal F}_0=\{Q\},{\mathcal F}_m, {\mathcal F}_{2m}$, etc. We also have
$$\sum_{Q'\in {\mathcal F}_k}|Q'|\ge \sum_{Q'\in {\mathcal F}_{ml}}|Q'|$$
for every $k=m(l-1)+1,\dots,ml$. Thus,
\begin{eqnarray*}
\Lambda|Q|\ge \sum_{k=0}^{\infty}\sum_{Q'\in {\mathcal F}_k}|Q'|&\ge& |Q|+m\sum_{l=1}^{\infty}\sum_{Q'\in{\mathcal F}_{ml}}|Q'|\\&=&|Q|+m\sum_{Q'\in {\mathcal S}',Q'\subsetneqq Q}|Q'|\,,
\end{eqnarray*}
whence
$$\sum_{Q'\in \s',Q'\subset Q}|Q'|\le |Q|+\frac{\Lambda-1}{m}|Q|,$$
proving the claim.
\end{proof}

\subsection{Augmentation of Carleson families}

\begin{lemma}\label{aug}
The augmentation of a $\Lambda_0$-Carleson family ${\mathcal S}$ by $\Lambda$-Carleson families ${\mathcal F}(Q)$ is a $\Lambda(\Lambda_0+1)$-
Carleson family.
\end{lemma}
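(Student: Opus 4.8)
The plan is to bound, for an arbitrary cube $R_0\in\md$, the sum $\sum_{P\in\wt\s,\,P\subset R_0}|P|$ by sorting the cubes $P\in\wt\s$ according to which cube of $\s$ is responsible for them. The first thing I would record is that this bookkeeping is canonical: if $P\in\wt{\mathcal F}(Q)$, then $Q$ is exactly the $\s$-roof $\widehat P_{\s}$ of $P$. Indeed, $P\in{\mathcal F}(Q)\subset{\mathcal D}(Q)$ forces $P\subset Q$ with $Q\in\s$, while the definition of $\wt{\mathcal F}(Q)$ says that no cube $R\in\s$ with $R\subsetneqq Q$ contains $P$; hence $Q$ is the smallest cube of $\s$ containing $P$. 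In particular the families $\wt{\mathcal F}(Q)$, $Q\in\s$, are pairwise disjoint and $\wt\s=\bigsqcup_{Q\in\s}\wt{\mathcal F}(Q)$.

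Now fix $R_0\in\md$. Every $P\in\wt\s$ with $P\subset R_0$ comes from a unique cube $Q=\widehat P_{\s}\in\s$ with $P\subset Q$, and since $P\subset Q\cap R_0\ne\varnothing$, the cubes $Q$ and $R_0$ are nested: either $Q\subset R_0$ or $R_0\subsetneqq Q$. Split the sum accordingly:
$$\sum_{P\in\wt\s,\,P\subset R_0}|P|\;=\;\sum_{\substack{Q\in\s\\ Q\subset R_0}}\ \sum_{P\in\wt{\mathcal F}(Q)}|P|\;+\;\sum_{\substack{Q\in\s\\ R_0\subsetneqq Q}}\ \sum_{\substack{P\in\wt{\mathcal F}(Q)\\ P\subset R_0}}|P|\;=:\;\mathrm{I}+\mathrm{II}.$$
(In the first sum no restriction $P\subset R_0$ is needed, since $\wt{\mathcal F}(Q)\subset{\mathcal F}(Q)\subset{\mathcal D}(Q)$ already forces $P\subset Q\subset R_0$.)

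For $\mathrm{I}$, the $\Lambda$-Carleson property of ${\mathcal F}(Q)$ applied to the test cube $Q$ gives $\sum_{P\in\wt{\mathcal F}(Q)}|P|\le\sum_{P\in{\mathcal F}(Q),\,P\subset Q}|P|\le\Lambda|Q|$, and then the $\Lambda_0$-Carleson property of $\s$ yields $\mathrm{I}\le\Lambda\sum_{Q\in\s,\,Q\subset R_0}|Q|\le\Lambda\Lambda_0|R_0|$. For $\mathrm{II}$, the crucial observation is that the cubes $Q\in\s$ with $R_0\subsetneqq Q$ all contain $R_0$ and hence form a chain $R_0\subsetneqq Q^{(1)}\subsetneqq Q^{(2)}\subsetneqq\cdots$, of which only the bottom one $Q^{(1)}$ can contribute: if $P\subset R_0$ and $P\in\wt{\mathcal F}(Q^{(i)})$ with $i\ge 2$, then $Q^{(i-1)}\in\s$ satisfies $P\subset R_0\subsetneqq Q^{(i-1)}\subsetneqq Q^{(i)}$, contradicting $P\in\wt{\mathcal F}(Q^{(i)})$. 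Therefore $\mathrm{II}=\sum_{P\in\wt{\mathcal F}(Q^{(1)}),\,P\subset R_0}|P|\le\sum_{P\in{\mathcal F}(Q^{(1)}),\,P\subset R_0}|P|\le\Lambda|R_0|$ by the $\Lambda$-Carleson property of ${\mathcal F}(Q^{(1)})$ applied to the test cube $R_0$ (and $\mathrm{II}=0$ if no such $Q^{(1)}$ exists). Adding the two bounds gives $\sum_{P\in\wt\s,\,P\subset R_0}|P|\le\Lambda\Lambda_0|R_0|+\Lambda|R_0|=\Lambda(\Lambda_0+1)|R_0|$, as claimed. The only step I expect to be non-routine is the claim inside $\mathrm{II}$ — that among the nested cubes of $\s$ lying strictly above $R_0$ only the innermost can be the $\s$-roof of a cube $P\subset R_0$ from the augmented family — since that is what collapses a potentially infinite sum over $Q\supsetneqq R_0$ to a single term; everything else is just feeding the right test cube into one of the two Carleson hypotheses.
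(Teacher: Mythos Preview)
Your proof is correct and follows essentially the same route as the paper's: split the cubes of $\wt\s$ inside the test cube according to whether the responsible $Q\in\s$ lies inside the test cube or strictly above it, bound the inner contribution by $\Lambda\Lambda_0|R_0|$ via the two Carleson hypotheses in sequence, and bound the outer contribution by $\Lambda|R_0|$ using the $\Lambda$-Carleson property of the single relevant $\mathcal F(Q)$. The paper states the outer term directly as $\mathcal F(\widehat{R_0}_{\s})$ without the chain argument, whereas you unpack why only the innermost $Q^{(1)}$ can contribute; your version is simply a more explicit rendering of the same idea.
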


\begin{proof}
Take any cube $Q\in {\mathscr D}$. The subcubes of $Q$ can appear in the augmented system $\widetilde\s$ either from $\mathcal F(\widehat Q_{\mathcal S})$ or from some ${\mathcal F}(Q')$ with $Q'\in {\mathcal D}(Q)\cap {\mathcal S}$. Since $\mathcal F(\widehat Q_{\mathcal S})$ is $\Lambda$-Carleson, we have
$$\sum_{R\in \mathcal F(\widehat Q_{\mathcal S}), R\subset Q}|R|\le \Lambda|Q|.$$
On the other hand,
$$\sum_{Q'\in\s,Q'\subset Q}\sum_{R\in \mathcal F(Q')}|R|\le \Lambda\sum_{Q'\in\s,Q'\subset Q}|Q'|\le \Lambda_0\Lambda|Q|\,.$$
Adding these inequalities, we get  the desired bound.
\end{proof}

Since we will often need to deal with stopping time  augmentations of Carleson families, it would be nice to find some easy to use
 criterion
for a condition ${\mathcal P}$ to be of Carleson type in the sense that for every $Q\in {\mathscr D}$, the family $\text{stop}(Q,\mathcal P)$ is Carleson
with uniformly controllable Carleson constant.

A simple sufficient condition is the following.

\begin{lemma}\label{stop}
Assume that for every cube $Q\in {\mathscr D}$ and any pairwise disjoint $Q_1',\dots,Q_N'\subset Q$ such that ${\mathcal P}(Q,Q_j')$ fails for each $j$, one has
$\sum_j|Q_j'|\le \eta|Q|$ with some $\eta<1$.
Then $\text{stop}(Q,\mathcal P)$ is $\frac{1}{1-\eta}$-Carleson for every $Q$.
\end{lemma}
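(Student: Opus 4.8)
The plan is to exploit the self-similarity of stopping systems together with the tree structure of $\Gamma_{\text{stop}(Q,{\mathcal P})}$, slicing the family by graph distance to the top cube and summing a geometric series. First I would fix a cube $Q_0\in{\mathscr D}$ and set ${\mathcal S}=\text{stop}(Q_0,{\mathcal P})$. By exactly the reduction used in the proof of Lemma~\ref{noeta} (pass to the maximal cubes of ${\mathcal S}$ contained in a given $R\in{\mathscr D}$, which are pairwise disjoint and contain all cubes of ${\mathcal S}$ sitting inside $R$), it suffices to prove the Carleson bound $\sum_{W\in{\mathcal S},\,W\subset Q}|W|\le\frac1{1-\eta}|Q|$ for cubes $Q\in{\mathcal S}$ only. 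And by the self-similarity property of stopping systems, for such a $Q$ the collection $\{W\in{\mathcal S}:W\subset Q\}$ is precisely $\text{stop}(Q,{\mathcal P})$, so the whole matter reduces to showing $A(Q):=\sum_{W\in\text{stop}(Q,{\mathcal P})}|W|\le\frac1{1-\eta}|Q|$ for every cube $Q\in{\mathscr D}$.

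Next I would organize $\text{stop}(Q,{\mathcal P})$ by graph distance: put $S_k(Q)=\sum |W|$ over those $W\in\text{stop}(Q,{\mathcal P})$ with $d_{\text{stop}(Q,{\mathcal P})}(Q,W)=k$, so that $S_0(Q)=|Q|$ and $A(Q)=\sum_{k\ge0}S_k(Q)$. The heart of the argument is the geometric decay estimate $S_{k+1}(Q)\le\eta\,S_k(Q)$. Since $\Gamma_{\text{stop}(Q,{\mathcal P})}$ is a tree rooted at $Q$, every cube $W$ at distance $k+1$ has a unique neighbor $P$ at distance $k$, and relative to $P$ the cube $W$ is one of the cubes reachable from $P$ in exactly one step; for a fixed $P$ those one-step cubes are pairwise disjoint (cubes at a common graph distance in a subfamily of ${\mathscr D}$ are always disjoint) and each of them $W$ satisfies ${\mathcal P}(P,W)=\text{false}$ by the very definition of ``reachable in one step''. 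Hence the hypothesis of the lemma, applied with top cube $P$ (and trivially extended from finitely many to countably many pairwise disjoint cubes by monotone convergence), gives $\sum_{W:\ P\to W}|W|\le\eta|P|$; summing over all cubes $P$ at distance $k$ yields $S_{k+1}(Q)\le\eta\sum_P|P|=\eta\,S_k(Q)$.

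Iterating the last inequality gives $S_k(Q)\le\eta^k|Q|$, hence $A(Q)\le|Q|\sum_{k\ge0}\eta^k=\frac1{1-\eta}|Q|$, and by the reduction of the first paragraph this shows that $\text{stop}(Q_0,{\mathcal P})$ is $\frac1{1-\eta}$-Carleson, as claimed.

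The step I expect to need the most care is the tree bookkeeping behind $S_{k+1}(Q)\le\eta S_k(Q)$: one must use the self-similarity of stopping systems to know that the graph distances below a cube $P$ are the same whether computed inside $\text{stop}(Q,{\mathcal P})$ or inside $\text{stop}(P,{\mathcal P})$, so that ``one step down from $P$'' genuinely means ``${\mathcal P}(P,\cdot)$ fails'', and one must also check that every distance-$(k+1)$ cube is counted under exactly one distance-$k$ parent. Everything else — the passage from cubes of ${\mathcal S}$ to arbitrary $R\in{\mathscr D}$, and the summation of the geometric series — is routine and already appears, in essence, in the proof of Lemma~\ref{noeta}.
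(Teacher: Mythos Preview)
Your argument is correct. The paper's own proof, however, is a two-line invocation of Lemma~\ref{noeta}: the hypothesis says precisely that for every cube $P$ in the stopping family, the cubes at $\Gamma_{\text{stop}(Q,\mathcal P)}$-distance $1$ from $P$ (which, by self-similarity, are exactly the one-step cubes below $P$, where $\mathcal P(P,\cdot)$ fails) have total volume at most $\eta|P|$. This rules out $\eta$-anti-Carleson stacks of height $1$, and Lemma~\ref{noeta} with $M=1$ gives the $\frac{1}{1-\eta}$ bound immediately.

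Your route is essentially a direct reproof of the $M=1$ case of Lemma~\ref{noeta}, with one structural difference worth noting: the paper's proof of Lemma~\ref{noeta} reduces to finite subfamilies and runs a self-improvement $\Lambda\le M+\eta\Lambda$ on the best Carleson constant, whereas you argue the geometric decay $S_{k+1}\le\eta S_k$ layer by layer and sum the series. Both work; the paper's packaging has the advantage that Lemma~\ref{noeta} is stated once for arbitrary height $M$ and then reused, while your version is slightly more transparent in this special case since it avoids the finite-subfamily detour and the bootstrapping. The ``careful'' step you flag (that distances below $P$ agree in $\text{stop}(Q,\mathcal P)$ and $\text{stop}(P,\mathcal P)$, and that every distance-$(k+1)$ cube has a unique distance-$k$ parent) is exactly the self-similarity and tree structure already recorded in Section~5, so there is no hidden difficulty there.
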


\begin{proof}
This condition rules out $\eta$-anti-Carleson stacks of height 1 in $\text{stop}(Q,\mathcal P)$. It remains to apply Lemma \ref{noeta}.
\end{proof}

\section{From the theory to applications}
The Carleson families already lie on the border between ``basic" and ``intermediate" tools,
so we shall stop the general theory here and pass to an example of how dyadic techniques can be
applied to some problems in analysis that do not contain any dyadic lattices or anything else like that in their
original formulations. The particular application we chose for this short book is the weighted norm
inequalities for multilinear Calder\'on-Zygmund operators. We will restrict ourselves to the most classical part of the
theory and just prove the sharp bounds in terms of the joint Muckenhoupt norm of the weights alone leaving without
comment more refined estimates involving $A_{\infty}$-norms. Still, we hope that a reader
who does not know this particular subject in and out will find the exposition both accessible and
instructive.

\section{The multilinear Calder\'on-Zygmund operators}
The most classical, known, and loved singular integral operator is the Hilbert transform on the real line:
\begin{equation}\label{hilbert}
Hf(x)=\int_{{\mathbb R}}\frac{f(y)}{x-y}dy.
\end{equation}
The function $y\mapsto \frac{1}{x-y}$ is not integrable near $x$, so one can apply this formula verbatim
without any reservations only if $x$ is outside the (closed) support of $f$. Nevertheless, it turns out that some
tweaking (considering the principal value $\displaystyle\lim_{\e\to 0}\int_{\{y:|x-y|>\e\}}\frac{f(y)}{x-y}dy$, using Fourier
transform, etc.) allows one to show that $H$ can be made sense of as a nice bounded operator
on every $L^p({\mathbb R})$ with $1<p<\infty$, and even if $f\in L^1$ (which is the harshest case for making sense
of the integral formula (\ref{hilbert})), then $Hf$ is finite almost everywhere and satisfies
the weak type estimate
$$|\{x\in{\mathbb R}:|Hf(x)|>\a\}|\le \frac{C}{\a}\|f\|_{L^1}$$
for every $\a>0$.

The most straightforward ${\mathbb R}^n$ analogue of the Hilbert transform  is the Riesz transform:
$$Rf(x)=\int_{{\mathbb R}^n}\frac{x-y}{|x-y|^{n+1}}f(y)dy.$$
It is often very convenient to view $Rf$ as a vector-valued function, but one can always project to any
direction $e\in{\mathbb R}^n$ and consider
$$R_ef(x)=\int_{{\mathbb R}^n}\frac{\langle x-y,e\rangle}{|x-y|^{n+1}}f(y)dy$$
instead.

In general, we can consider arbitrary operators
$$Tf(x)=\int_{{\mathbb R}^n}K(x,y)f(y)dy$$
with the so-called Calder\'on-Zygmund kernels.
The development of the classical non-weighted theory of linear Calder\'on-Zygmund operators (even for the special case of the Hilbert transform)
is beyond the scope of this book. An interested reader can find a good exposition in \cite{GR,G,S}.
We will just note here that the main 3 properties that make the whole theory work are the following.

\medskip
\begin{enumerate}
\renewcommand{\labelenumi}{(\roman{enumi})}
\item
The scale and shift invariant estimate of the kernel:
$$|K(x,y)|\le \frac{C}{|x-y|^n}$$
(which makes $t^nK(tx,ty)$ as good (or as bad) as $K$, so if we conjugate the operator action by an affine change of variable $x\mapsto a+tx$, the entire
theory stays invariant).
\item
Some reasonable (and scale and shift invariant) continuity of the kernel away from the diagonal: if $Q$ is any cube, $x',x''\in Q$
and $y\not\in Q_{[t]}=c(Q)+t(Q-c(Q))$ with $t\ge 2$, then
$$|K(x',y)-K(x'',y)|\le\frac{C}{(t\ell_Q)^n}\rho(t^{-1}),$$
where $\rho:[0,+\infty)\to [0,+\infty)$ is some modulus of continuity (continuous, increasing and subadditive function such that $\rho(0)=0$)
that tends to $0$ not too slowly near $0$.
Note that it is almost always required that the same condition holds with the roles of $x$ and $y$ exchanged, but
 for our current  purposes
this part will suffice.
\item
Some cancellation in the kernel (antisymmetry, zero average, etc.). This part is crucial for the classical theory but here, where we just take
all statements of the classical non-weighted theory for granted, we are not concerned with it in any way.
\end{enumerate}

A multilinear ($m$-linear) Calder\'on-Zygmund operator takes in $m$ functions $f_1,\dots,f_m$ and its kernel $K(x,y_1,\dots,y_m)$ depends on $m+1$ variables
$x,y_1,\dots,y_m\in{\mathbb R}^n$. The result is defined as
$$T[f_1,\dots,f_m](x)=\int_{({\mathbb R}^n)^m}K(x,y_1,\dots,y_m)f(y_1)\dots f(y_m)dy_1\dots dy_m$$
when the integral makes sense (for a set $E\subset R^n$ we use a notation $E^m=E\times\dots\times E$) and the kernel is allowed to have a singularity only for $y_1=\dots=y_m=x$.
The scale and shift invariant bound will then be
$$|K(x,y_1,\dots,y_m)|\le \frac{C}{(\max\limits_i|x-y_i|)^{nm}}$$
and the corresponding continuity property can be stated as
$$|K(x',y_1,\dots,y_m)-K(x'',y_1,\dots,y_m)|\le \frac{C}{(t\ell_Q)^{mn}}\rho(t^{-1})$$
whenever $x',x''\in Q$,  $t\ge 2$, and there exists $i$ such that $y_i\not\in Q_{[t]}$.

The only nontrivial result of the classical theory we shall need below is the weak type bound
$$|\{x\in{\mathbb R}^n:|T[f_1,\dots,f_m](x)|>\a\}|\le C\left(\frac{1}{\a}\prod_{i=1}^m\|f_i\|_{L^1}\right)^{1/m}\,,$$
which we will just postulate.

At last, a few words should be said about the sense in which the action of $T$ in various weighted $L^p$ spaces of functions
is understood. The weak type property is, of course, formally contingent upon that $T[f_1,\dots,f_n]$ is well-defined
for any $f_i\in L^1$ as an almost everywhere finite function. Still, a function from an arbitrary weighted space does not
need to be even in $L^1_{\text{loc}}$ without some restrictions on the weight. What saves the day is that there is a linear
space of functions that is contained in $L^1$ as well as in  any weighted $L^p$  and dense there, namely the space $L^\infty_0$
of bounded
measurable functions with compact support. So, we will use it to take an easy way out: we will say that $T$ is bounded
as an operator from some product of weighted $L^p$-spaces to another weighted $L^p$-space if the corresponding
inequality for the norms holds for all test functions $f_i\in L^\infty_0$. Thus, we may always assume that all our test
functions below belong to $L^\infty_0$ though for most arguments it is not really necessary.

\section{Controlling values of $T[f_1,\dots,f_m]$ on a cube}
Our first goal will be to get an efficient pointwise estimate of $g=T[f_1,\dots,f_m]$ in terms of averages
$|f_i|_Q=\frac{1}{|Q|}\int_Q|f_i|$ of functions $f_i$ over various dyadic cubes.

The decay of the kernel $K(x,y_1,\dots,y_m)$ at infinity is too weak to make the trivial bound
$$|g(x)|\le \int_{({\mathbb R}^n)^m}|K(x,y_1,\dots,y_m)|\prod_{i=1}^m|f_i(y_i)|dy_1\dots dy_m$$
really useful even when the integral is finite. However, the continuity of the kernel in $x$ allows one to estimate the difference
of values of $g$ at two points $x',x''$ in some cube $Q$ by
\begin{eqnarray*}
&&\Big[|T[f_1\chi_{Q_{[2]}},\dots,f_m\chi_{Q_{[2]}}](x')|+|T[f_1\chi_{Q_{[2]}},\dots,f_m\chi_{Q_{[2]}}](x'')|\Big]\\
&&+\sum_{k=1}^{\infty}\int_{Q^m_{[2^{k+1}]}\setminus Q^m_{[2^k]}}|K(x',\vec y)-K(x'',\vec y)|\cdot |F(\vec y)|\,d\vec y=I_1+I_2,
\end{eqnarray*}
where $\vec y=(y_1,\dots,y_m)$, $F(\vec y)=f(y_1)\dots f(y_m)$.

By the continuity property of $K$,
$$I_2\le C\sum_{k=1}^{\infty}\rho(2^{-k})|F|_{Q^m_{[2^{k+1}]}}=C\sum_{k=1}^{\infty}\rho(2^{-k})\prod_{i=1}^m|f_i|_{Q_{[2^{k+1}]}}.$$

Note now that the weak type bound for $T$ implies that for the set
$$G_{\a}=\{x\in Q: |T[f_1\chi_{Q_{[2]}},\dots,f_m\chi_{Q_{[2]}}](x)|>\a\},$$
we have
$$|G_{\a}|\le\left(\frac{C}{\a}\prod_{i=1}^m|f_i|_{Q_{[2]}}\right)^{1/m}|Q|.$$
Hence, taking  $\a=C\la^{-m}\prod_{i=1}^m|f_i|_{Q_{[2]}}$, we obtain that   $|G_{\a}|\le \la|Q|$ and for every $x',x''\in E=Q\setminus G_{\a}$,
$$|I_1|\le 2C\la^{-m}\prod_{i=1}^m|f_i|_{Q_{[2]}}.$$

Therefore, we have proved that for every cube $Q$, there exists a set $E\subset Q$ with $|E|\ge (1-\la)|Q|$ and such that for all $x',x''\in E$,
$$|g(x')-g(x'')|\le C(\la)\sum_{k=0}^{\infty}\rho(2^{-k})\prod_{i=1}^m|f_i|_{Q_{[2^{k+1}]}}.$$
This phenomenon merits a few definitions and a separate discussion.

\section{Oscillation and $\la$-oscillation}
Let $f$ be a measurable function defined on some non-empty set $E$. The oscillation of $f$ on $E$ is just
$$\o(f;E)=\sup_Ef-\inf_Ef,$$
i.e., the length of the shortest closed interval containing $f(E)$. The oscillation is finite when $f$ is bounded on $E$.

Assume now that $f$ is measurable and finite almost everywhere on some measurable set $Q\subset {\mathbb R}^n$ of finite positive measure. Let $\la\in (0,1)$. The $\la$-oscillation
of $f$ on $Q$ is defined as
$$\o_{\la}(f;Q)=\inf\{\o(f;E): E\subset Q, |E|\ge (1-\la)|Q|\}.$$
Note that, unlike the usual oscillation $\o(f;Q)$, the $\la$-oscillation $\o_{\la}(f;Q)$ is finite for any measurable function $f$ that is finite almost everywhere on $Q$,
which makes it a much more flexible tool.

The first thing we note is that the infimum in the definition of $\o_{\la}(f;Q)$ is, actually, a minimum as the following lemma shows.

\begin{lemma}\label{osc}
For every $\la\in (0,1)$, there exists a set $E\subset Q$ of measure $|E|\ge (1-\la)|Q|$ with $\o_{\la}(f;Q)=\o(f;E)$.
\end{lemma}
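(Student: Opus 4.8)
The plan is to realize the infimum by a limiting procedure, using the one key fact that decreasing a set monotonically keeps the oscillation under control, together with a compactness argument on the real line. First I would pick a minimizing sequence: sets $E_k\subset Q$ with $|E_k|\ge(1-\la)|Q|$ and $\o(f;E_k)\to\o_{\la}(f;Q)=:\omega_0$. For each $k$ there is a closed interval $[a_k,b_k]$ of length $\o(f;E_k)$ with $f(E_k)\subset[a_k,b_k]$, and we may shift these intervals so that they all meet a fixed bounded region (if $\omega_0=+\infty$ there is nothing to prove, so assume it is finite; then for large $k$ the lengths $b_k-a_k$ are bounded, and we can translate each $E_k$'s interval to lie inside, say, the essential range band of $f$ on a positive-measure subset, keeping $E_k$ fixed). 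Passing to a subsequence, $a_k\to a$ and $b_k\to b$ with $b-a=\omega_0$.

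The natural candidate for the extremal set is $E=\{y\in Q: a\le f(y)\le b\}$; by construction $\o(f;E)\le b-a=\omega_0$, so it remains to check $|E|\ge(1-\la)|Q|$, and then $\o(f;E)\ge\omega_0$ is automatic from the definition of $\omega_0$ as the infimum, giving equality. To get the measure bound, observe that for each fixed $\e>0$, the slightly enlarged set $E_\e=\{y\in Q: a-\e\le f(y)\le b+\e\}$ eventually contains $E_k$ for all large $k$, because $[a_k,b_k]\subset[a-\e,b+\e]$ once $a_k,b_k$ are within $\e$ of $a,b$; hence $|E_\e|\ge(1-\la)|Q|$ for every $\e>0$. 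Since $f$ is finite a.e.\ on $Q$, we have $E=\bigcap_{\e>0}E_\e=\bigcap_{n\ge1}E_{1/n}$ up to a null set, and the sets $E_{1/n}$ are decreasing with finite measure, so $|E|=\lim_n|E_{1/n}|\ge(1-\la)|Q|$. This closes the argument.

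The main obstacle is the bookkeeping around the translations: a priori the minimizing intervals $[a_k,b_k]$ could wander off to $\pm\infty$ while $f$ stays put, so one must first pin them down. The clean way is to note that since $f$ is finite a.e.\ on $Q$ and $|Q|<\infty$, there is a compact interval $J$ with $|\{y\in Q: f(y)\in J\}|>\la|Q|$; then \emph{any} set $E_k$ with $|E_k|\ge(1-\la)|Q|$ must intersect $\{f\in J\}$, so $f(E_k)\cap J\ne\varnothing$, which forces $[a_k,b_k]\cap J\ne\varnothing$. That anchors all the intervals to the fixed compact set $J$, and combined with the bound on their lengths it lets us extract a convergent subsequence of endpoints by Bolzano--Weierstrass. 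Everything else — the monotone convergence of measures, the trivial inequality $\o(f;E)\le\o(f;E')$ for $E\subset E'$ — is routine.
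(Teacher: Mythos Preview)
Your argument is correct and follows the same overall strategy as the paper: take a minimizing sequence of sets $E_k$, show the associated intervals $[a_k,b_k]$ stay in a bounded region of $\mathbb R$, apply Bolzano--Weierstrass to extract a limiting interval, and build $E$ from the limit. The execution differs in two places worth noting.

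First, for the anchoring step, the paper argues that one cannot have more than $\frac{1}{1-\la}$ pairwise disjoint intervals among the $I_k$ (otherwise the disjoint preimages would overflow $|Q|$), and deduces boundedness of $\{a_k\}$ from that. Your approach---fixing a compact $J$ with $|\{f\in J\}|>\la|Q|$ and observing that every $E_k$ of measure $\ge(1-\la)|Q|$ must meet $\{f\in J\}$, so $[a_k,b_k]\cap J\ne\varnothing$---is a direct and arguably cleaner way to pin the intervals down. Second, the paper takes $E=\limsup_k E_k$ and checks that each point of this set satisfies $a\le f\le a+\o_\la(f;Q)$ via a subsequence, whereas you go straight to the maximal candidate $E=f^{-1}([a,b])\cap Q$ and verify $|E|\ge(1-\la)|Q|$ by continuity of measure applied to the shrinking sets $E_\e=f^{-1}([a-\e,b+\e])\cap Q$. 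Your $E$ contains the paper's $\limsup E_k$, so in a sense you identify the largest extremal set directly; both constructions work and the amount of effort is comparable.

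One stylistic remark: the parenthetical about ``translating each $E_k$'s interval'' in your first paragraph is misleading (the intervals are determined by $f(E_k)$ and cannot be moved independently), but you supersede it with the correct anchoring argument in your final paragraph, so no harm done.
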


\begin{proof}
There are subsets $E_k\subset Q$ with $|E_k|\ge (1-\la)|Q|$ and intervals $I_k=[a_k,a_k+\o(f;E_k)]\subset {\mathbb R}$ such that $f(E_k)\subset I_k$ and $\o(f;E_k)\downarrow \o_{\la}(f;Q)$.
The crucial point is that we cannot have more than $\frac{1}{1-\la}$ pairwise disjoint intervals $I_{k_j}$ with this property because otherwise
$$|Q|\ge |\cup_jf^{-1}(I_{k_j})|=\sum_j|f^{-1}(I_{k_j})|>\frac{1}{1-\la}(1-\la)|Q|=|Q|.$$
Thus, $\{a_k\}$ is a bounded sequence and, by Bolzano-Weierstrass, passing to a subsequence, if necessary, we can assume
without loss of generality that $a_k\to a$ as $k\to \infty$.

Now just put $E=\limsup_{k\to\infty}E_k=\cap_{n\ge 1}\cup_{k\ge n}E_k$. Note that $|E|\ge (1-\la)|Q|$ and for every $x\in E$ there is a subsequence $k_j$ such that
$$a_{k_j}\le f(x)\le a_{k_j}+\o(f;E_{k_j})$$ for all $j$, and, thereby,
$$a\le f(x)\le a+\o_{\la}(f;Q).$$
\end{proof}

Observe that the set $E\subset Q$ in Lemma \ref{osc} does not need to be unique. For example, let $Q=[0,1)$ and $f(x)=x$. Then $$\o_{\la}(f;Q)=1-\la=\o(f;E_{a,\la})$$ for all
intervals $E_{a,\la}=[a,a+1-\la)$ with $0\le a\le \la$.

\subsection{The oscillation chain bound for a measurable function}
We are now going to capitalize on the trivial observation that if $E_1,\dots, E_k$ is a chain of sets such that $E_j\cap E_{j+1}\not=\varnothing$ for all $j=1,\dots,k-1$, then for every $x\in E_1,y\in E_k$,
one has
$$|f(x)-f(y)|\le \sum_{j=1}^k\o(f;E_j).$$

\begin{theorem}\label{oscbound}
Let $f:{\mathbb R}^n\to {\mathbb R}$ be any measurable almost everywhere finite  function such that for every $\e>0$,
\begin{equation}\label{cond}
|\{x\in[-R,R]^n:|f(x)|>\e\}|=o(R^n)\,\,\text{as}\,\,R\to\infty.
\end{equation}
Then for every dyadic lattice ${\mathscr D}$ and every $\la\in (0,2^{-n-2}]$, there exists a regular%
\footnote{Recall that a family $\s\subset \md$ is regular if for each compact $K\subset\mathbb R^n$, there exists $Q\in\s$ containing $K$.}
 6-Carleson family $\s\subset {\mathscr D}$
(depending on $f$) such that
$$|f|\le \sum_{Q\in\s}\o_{\la}(f;Q)\chi_Q$$
almost everywhere.
\end{theorem}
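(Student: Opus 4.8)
Proof proposal. The plan is to realize $\s$ as the augmentation of one ascending chain of cubes by stopping-time families for a single condition, and to read the pointwise estimate off a chain of overlapping oscillation intervals. For each $Q\in\md$ fix, via Lemma~\ref{osc}, a closed interval $I_Q$ with $|I_Q|=\omega_\lambda(f;Q)$ and a set $E_Q\subset Q$, $|E_Q|\ge(1-\lambda)|Q|$, $f(E_Q)\subset I_Q$. Put $\theta=2\lambda$, so that $\lambda<\theta<2^{-n}$ and even $\theta<2^{-n}(1-\lambda)$ because $\lambda\le2^{-n-2}$, and define
\[
\mathcal P(Q,R)=\text{true}\iff \bigl|\{y\in R:\ f(y)\notin I_Q\}\bigr|\le\theta|R|\qquad(R\subset Q).
\]
Since $\{y\in Q:f(y)\notin I_Q\}\subset Q\setminus E_Q$ has measure $\le\lambda|Q|$, any pairwise disjoint cubes inside $Q$ on which $\mathcal P(Q,\cdot)$ fails have total volume $<\tfrac\lambda\theta|Q|=\tfrac12|Q|$, so by Lemma~\ref{stop} every $\operatorname{stop}(Q,\mathcal P)$ is $2$-Carleson.

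Using \eqref{cond} with $\varepsilon_j=2^{-j-2}$, I would fix an ascending chain $R_0\subset R_1\subset\cdots$ in $\md$ with $\bigcup_j\operatorname{int}R_j=\mathbb R^n$, each $R_j$ large enough (all containing a fixed point, and of side tending to $\infty$) that $|\{y\in R_j:|f(y)|>2^{-j-2}\}|\le\lambda|R_j|$. Then $\omega_\lambda(f;R_j)\le 2^{-j-1}$, and since the sets $\{y\in R_j:|f(y)|\le2^{-j-2}\}$ and $E_{R_j}$ must overlap (their measures exceed $\tfrac12|R_j|$ each), $I_{R_j}$ meets $[-2^{-j-2},2^{-j-2}]$, so $\operatorname{dist}(0,I_{R_j})\le2^{-j}$. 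Such a chain is regular and, because $|R_j|\le2^{-n}|R_{j+1}|$ gives $\sum_{i\le k}|R_i|\le\frac{|R_k|}{1-2^{-n}}\le2|R_k|$, it is $2$-Carleson. Let $\s$ be the augmentation of $\{R_j\}$ by the families $\mathcal F(R_j)=\operatorname{stop}(R_j,\mathcal P)$. By Lemma~\ref{aug} it is $2(2+1)=6$-Carleson, it contains $\{R_j\}$ hence is regular, and it depends only on $f$.

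For the pointwise bound take $x$ a (dyadic) weak Lebesgue point of $f$, a full-measure set by the baby Lebesgue differentiation theorem. The cubes of $\s$ containing $x$ are nested, so they form a chain $\s_x$; every cube of $\s$ lies inside all large $R_j$, so the $R_j$ containing $x$ are cofinal in $\s_x$, and $J:=\bigcup_{P\in\s_x}I_P$ contains all $I_{R_j}$ with $j$ large, whence $0\in\overline J$. The engine of the proof is the property of stopping-time augmentations noted when augmentation was introduced: $\mathcal P(\widehat R_\s,R)$ holds for \emph{every} $R\in\md$, i.e.\ $f\in I_{\widehat R_\s}$ off a $\theta$-fraction of $R$. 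Apply this with $R$ the $\md$-parent $P'$ of a cube $P\in\s_x$, and recall that the $\s$-successor $P''$ of $P$ is $\widehat{P'}_\s$; using $|P|=2^{-n}|P'|$ and $\theta<2^{-n}(1-\lambda)$ one finds that $\{y\in P:f(y)\in I_{P''}\}\cap E_P$ has positive measure, so $I_P\cap I_{P''}\neq\varnothing$. Thus consecutive intervals along $\s_x$ overlap, $J$ is a genuine interval, and $\operatorname{length}(J)\le\sum_{P\in\s_x}\omega_\lambda(f;P)$. To place $f(x)$, for each $\varepsilon>0$ use the weak Lebesgue property with parameters $\varepsilon$ and $1-\theta$ to get a cube $V\ni x$ with $|\{y\in V:|f(y)-f(x)|>\varepsilon\}|<(1-\theta)|V|$; since also $|\{y\in V:f(y)\notin I_{\widehat V_\s}\}|\le\theta|V|$, these two subsets of $V$ meet, so $\operatorname{dist}(f(x),I_{\widehat V_\s})\le\varepsilon$, and $I_{\widehat V_\s}\subset J$ as $\widehat V_\s\in\s_x$. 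Letting $\varepsilon\to0$ gives $f(x)\in\overline J$; since $\overline J$ is a closed interval containing both $0$ and $f(x)$, $|f(x)|\le\operatorname{length}(\overline J)\le\sum_{P\in\s_x}\omega_\lambda(f;P)=\sum_{Q\in\s,\,x\in Q}\omega_\lambda(f;Q)$, which is trivial when the sum is $+\infty$.

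The routine parts I am skipping are the augmentation bookkeeping — that near $x$ the cubes of $\s$ are the $\s$-successors of $x$ in the relevant stopping trees, plus possibly extra cubes which only enlarge the right-hand side — and the elementary fact that an increasing union of intervals with overlapping consecutive members is an interval. The step I expect to be the real crux, and the reason both the hypothesis $\lambda\le2^{-n-2}$ and this particular construction are needed, is the last one: the differentiation theorem available here produces only a \emph{single} good cube $V$ for each accuracy $\varepsilon$ and says nothing at finer scales, so the family must be rigged so that, through the augmentation property, the control ``$f\in I_{\widehat R_\s}$ off a $\theta$-fraction of $R$'' holds at \emph{all} scales $R$ simultaneously; keeping those intervals chained together on the way up is exactly what forces the margin $2^{-n}(1-\lambda)>\theta=2\lambda$, i.e.\ $\lambda\le2^{-n-2}$.
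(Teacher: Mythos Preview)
Your proposal is correct and follows the same architecture as the paper's proof: augment a regular $2$-Carleson base family by $2$-Carleson stopping-time families to obtain a $6$-Carleson family $\s$, then exploit the general augmentation property that $\mathcal P(\widehat R_\s,R)$ holds for every $R\in\md$ to chain the oscillation intervals together and capture $f(x)$ via the weak Lebesgue differentiation theorem. The genuine difference is in the choice of the stopping condition. The paper sets $\mathcal P(Q,Q')=$ ``every child of $Q'$ is linked to $Q$'' (meaning $E(Q)\cap E(Q'')\ne\varnothing$ for each child $Q''$), which is tailored so that the resulting $\s$ is a \emph{linked} family and the oscillation-chain bound applies directly on $\bigcup_{Q\in\s}E(Q)$; the decay hypothesis~\eqref{cond} is invoked only at the very end, inside the chain argument, to force the tail term to vanish. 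You instead take the density condition $\mathcal P(Q,R)=$ ``$|\{f\notin I_Q\}\cap R|\le\theta|R|$'' and build the decay hypothesis into the construction up front by choosing the base chain $\{R_j\}$ so that $I_{R_j}\to\{0\}$. Your condition is a bit more streamlined---it avoids the auxiliary ``linked'' notion and the reference to children---and the verification that consecutive $I_P$'s overlap is a one-line measure count; the paper's version, on the other hand, makes the geometric picture of combing the lattice and preserving linkage more explicit. Both routes lead to the same numerical margin $(2^{n+1}+1)\lambda<1$, which is what the hypothesis $\lambda\le 2^{-n-2}$ is there to guarantee.
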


\begin{proof}
For every cube $Q\in {\mathscr D}$, fix a set $E(Q)\subset Q$ such that $|E(Q)|\ge (1-\la)|Q|$ and $\o(f;E(Q))=\o_{\la}(f;Q)$.
We say that two cubes $Q\supset Q'$ are linked if $E(Q)\cap E(Q')\not=\varnothing$. Clearly, if $Q_N\supset\dots\supset Q_1$
is a chain of nested cubes in which $Q_{j+1}$ and $Q_j$ are linked for every $j=1,\dots,N-1$, then
$$\o(f;E(Q_1)\cup\dots\cup E(Q_N))\le \sum_{j=1}^N\o(f;E(Q_j)).$$

We call a family $\s$ linked if every two cubes $Q'\subset Q''$ in $\s$ with $d_{\s}(Q',Q'')=1$ are linked.
Assume that $\s$ is linked and regular. Then, starting from any cube $Q\in\s$, we can go up and enumerate all cubes from $\s$ we meet
on the way: $Q=Q_1\subset Q_2\subset\dots$. Take a point $x\in E(Q)$. For every $N\ge 1$, we have
$$|f(x)|\le \sum_{j=1}^{N-1}\o(f;E(Q_j))+\sup_{E(Q_N)}|f|.$$
However,  condition (\ref{cond}) implies that for every $\e>0$, $E(Q_N)$ intersects the set $\{|f|<\e\}$
if the cube $Q_N$ is sufficiently large, so in this case,
$$\sup_{E(Q_N)}|f|\le \e+\o(f;E(Q_N)).$$
Since $\e$ was arbitrary, we conclude that then $|f(x)|\le \sum_{j=1}^{\infty}\o(f;E(Q_j))$, which can be restated as
$$|f|\le \sum_{Q\in\s}\o_{\la}(f;Q)\chi_Q$$
on $\cup_{Q\in \s}E(Q)$.

Now it becomes clear how the system $\s$ should be constructed. We should just first make sure that the entire lattice ${\mathscr D}$
is linked and then to rarefy it as much as possible by removing the intermediate cubes in long linked descending chains
when a direct shortcut is available. The first requirement is equivalent to having each cube $Q$ linked to every of its children.
However, for every child $Q'$ of $Q$, we have $E(Q),E(Q')\subset Q$ and
$$|E(Q)|+|E(Q')|\ge(1-\la)(1+2^{-n})|Q|$$
so this requirement is automatically satisfied as soon as
$$(1-\la)(1+2^{-n})>1.$$

It may not be clear which cubes are essential and which are unnecessary for keeping ${\mathscr D}$ or its subsets linked if we look
at each cube alone and ask about its role in the entire lattice. However, we can certainly locate some cubes $Q'$ that seem like good candidates for removal if we start with some cube $Q\in\md$ and try to ``comb'' the lattice $\md$ starting with $Q$ and going down. Then, at each moment when we consider some particular cube $Q'\subset Q$ as a candidate for removal,
we still have the entire family $\mathcal D(Q')$ untouched and linked by its parent-child bonds.  Thus, at this moment, the maximal linked descending
chains starting at $Q$ and passing through $Q'$ will then pass through one of the children of $Q'$, so we can safely bypass
$Q'$ in all those chains if each child of $Q'$ is linked directly to $Q$ or to some intermediate cube between $Q$ and $Q'$ that
survived the previous combing.

Hence we just define the condition ${\mathcal P}(Q,Q')$ by saying that ${\mathcal P}(Q,Q')$ is satisfied if every child of $Q'\subset Q$ is linked to $Q$ (see Figure\ref{blobs}).

\begin{figure}[ht]
\begin{center}
\includegraphics[width=6cm,height=6cm]{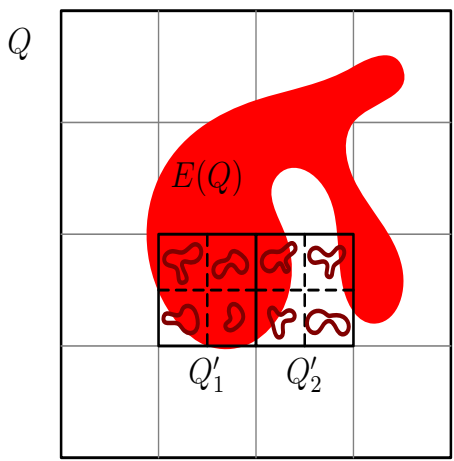}
\caption[]{$\mathcal P(Q,Q_1')$ holds because every child of $Q_1'$ is linked to $Q$, i.e., the corresponding sets encircled in brown
intersect $E(Q)$ (red), but $\mathcal P(Q,Q_2')$ fails, so the removal of $Q_2'$ may disconnect the chain linking $Q$ to the
bottom right child of $Q_2'$.}
\label{blobs}
\end{center}
\end{figure}

Combing the lattice down from some fixed cube in the manner described above
is exactly what our stopping time tool does, so it is natural now to
take a look at the families $\text{stop}(Q,\mathcal P)$. We would like them to be well-defined and Carleson, so we have
to check 2 things:

1) ${\mathcal P}(Q,Q)$ always holds. This was proved just a few lines ago when we talked about having the entire lattice linked.

2) ${\mathcal P}(Q,Q')$ is a Carleson-type condition. Indeed, suppose that $Q_j'$ is a family of pairwise disjoint subcubes of $Q$ such that
${\mathcal P}(Q,Q_j')$ fails for all $j$. Then for each $Q_j'$ there exists its child $Q_j''$ such that $E(Q_j'')\cap E(Q)=\varnothing$.
Since we still have $E(Q_j'')\subset Q_j''\subset Q$, we obtain
\begin{eqnarray*}
\sum_j|Q_j'|&=&2^n\sum_j|Q_j''|\le \frac{2^n}{1-\la}\sum_j|E(Q_j'')|\\
&\le& \frac{2^n}{1-\la}|Q\setminus E(Q)|\le \frac{2^n\la}{1-\la}|Q|\le \frac{1}{2}|Q|\,,
\end{eqnarray*}
provided that $\la\le\frac{1}{2^{n+1}+1}$.

Now just take any regular $2$-Carleson family (say, the family of all cubes in $\md$ containing the origin), and augment it by $\text{stop}(Q,{\mathcal P})$.
By Lemmas \ref{aug} and \ref{stop}, the resulting family $\s$ is $6$-Carleson. Also, it has
the property that ${\mathcal P}(\widehat Q_{\s},Q)$ holds for every $Q\in {\mathscr D}$. This can be used
in two ways.

Suppose that  $Q,Q'\in\s$, $Q\supset Q'$, and $d_{\s}(Q',Q)=1$. Then $Q$ is the
$\s$-roof of the parent $\widetilde Q$
of $Q'$, so ${\mathcal P}(Q,\widetilde Q)$ holds, i.e., $Q$ is linked to $Q'$ (as well as to the dyadic brothers of $Q'$, but that part is fairly useless).
Thus $\s$ is linked.

Another way to use this condition is to take any weak Lebesgue point $x\in {\mathbb R}^n$ of $f$ and, for fixed
$\e>0$, consider a cube $R\in {\mathscr D}$
containing $x$ such that
$$|\{y\in R:|f(y)-f(x)|>\e\}|<\la|R|\,.$$
Let $P$ be the parent of $R$.
Then $\widehat P_{\s}\in \s$, so we have
$$|f|\le \sum_{Q\in\s}\o_{\la}(f;Q)\chi_Q$$
on $E(\widehat P_{\s})$.
However, we also have $\widehat P_{\s}$ linked to all children of $P$ and, in particular to $R$, so
$$|f|\le \sum_{Q\in\s}\o_{\la}(f;Q)\chi_Q+\o_{\la}(f;R)$$
on $E(R)$.
Next, observe that $\o_{\la}(f;R)\le 2\e$
and the sets $E(R)\subset R$ and $\{y\in R:|f(y)-f(x)|\le\e\}$
intersect, provided that $\la<\frac 12$, so we finally conclude that
$$|f(x)|\le \sum_{Q\in\s}\o_{\la}(f;Q)\chi_Q+3\e\,.$$
Since $\e>0$ was arbitrary, the statement of the theorem follows.
\end{proof}

\section{From a sum over all cubes to a dyadic sum}
The result of Section 9 can  be restated as
\begin{equation}
\label{omegabd}
\o_{\la}(T[f_1,\dots,f_m];Q)\le C(\la)\sum_{k=0}^{\infty}\rho(2^{-k})\prod_{i=1}^m|f_i|_{Q_{[2^{k+1}]}}\,.
\end{equation}
Since the weak type bound implies that the measure of the set $\{x\in\mathbb R^n: |T[f_1,\dots,f_m]|>\e\}$ is finite
for every $\e>0$ and $f_1,\dots,f_m\in L^1$,
we can apply Theorem \ref{oscbound} to estimate $T[f_1,\dots,f_m]$ pointwise. A slight nuisance is that while in the resulting double sum $Q$ runs over
a Carleson subset $\s$ of a fixed dyadic lattice ${\md}$, the averages $|f_i|_{Q_{[2^{k+1}]}}$ are taken over non-dyadic
cubes. However, the Three Lattice Theorem allows one to dominate the right hand side of (\ref{omegabd}) by $3^n$ dyadic sums without any effort: note that for each $k$,
the cube $Q_{[2^{k+1}]}$ can be covered by a cube in one of the lattices $\md^{(j)}$ (see Remark \ref{remcor}) of comparable size and,
since the sidelengths of the cubes $Q_{[2^{k+1}]}$ increase as a geometric progression, each cube $R\in {\md^{(j)}}$ can
be used as an extension of $Q_{[2^{k+1}]}$ only finitely many times.
At last, the ratio $\ell_Q/\ell_R$ is comparable with $\ell_Q/\ell_{Q_{[2^{k+1}]}}=2^{-k-1}$, so, since the ratio $\rho(t)/\rho(ct)$
is uniformly bounded in $t>0$ for every fixed $c\in (0,1)$, we get
$$\o_{\la}(T[f_1,\dots,f_m];Q)\le C(n,\la)\sum_{j=1}^{3^n}\sum_{{R\in\md^{(j)}}\atop{R\supset Q}}\rho(\ell_Q/\ell_R)\prod_{i=1}^m|f_i|_R,$$
and, by Theorem \ref{oscbound},
$$|T[f_1,\dots,f_m]|\le C(n,\la)\sum_{j=1}^{3^n}\sum_{Q\in\s}
\sum_{{R\in\md^{(j)}}\atop{R\supset Q}}\rho(\ell_Q/\ell_R)\Big(\prod_{i=1}^m|f_i|_R\Big)\chi_Q$$
almost everywhere.

Now recall that for a fixed $j$, every cube $Q\in\md$ is contained in a unique cube $\widetilde Q_j\in \md^{(j)}$
with $\ell_{\widetilde Q_j}=3\ell_Q$. Since every $\widetilde Q_j$ can arise as an extension cube for at most $3^n$ cubes $Q$
and since the sets $E(Q)$ in the definition of the sparse family $\s$
can perfectly well serve to show the sparseness of the family $\s_j=\{\widetilde Q_j:Q\in\s\}$ only with a $3^n$ times
smaller sparseness constant, we conclude that for each set of functions $f_1,\dots,f_m$ and each $\la\in(0,2^{-n-2}]$, there exist $3^n$
dyadic lattices $\md^{(j)}$ and $3^n$  families $\s_j\subset \md^{(j)}$ each of which is $\Lambda$-Carleson with  $\Lambda=6\cdot 3^n$
such that
\begin{equation}\label{lastest}
|T[f_1,\dots,f_m]|\le C(n,\la)\sum_{j=1}^{3^n}\sum_{Q\in \s_j}
\sum_{{R\in\md^{(j)}}\atop{R\supset Q}}\rho(\ell_Q/\ell_R)\Big(\prod_{i=1}^m|f_i|_R\Big)\chi_Q
\end{equation}
pointwise.

\section{A useful summation trick}
In this section we describe a convenient way to estimate a double sum of the kind
$$\sum_{Q\in {\s}}\sum_{{R\in {\mathscr D}}\atop{R\supset Q}},$$
where $\md$ is a dyadic lattice and $\s$ is an arbitrary regular subset of $\md$.

The way the sum is written means that we take all cubes in $\s$ one by one and start looking up taking all dyadic cubes
$R$ on the way into account. We want to rewrite it so that the direction we look in changes from up to down
but the outer sum is still over $\s$, not over~${\mathscr D}$. This seems impossible until we realize that every cube $R\in {\mathscr D}$
has a roof $\widehat R_{\s}\in \s$, after which the task becomes trivial:
$$\sum_{Q\in {\s}}\sum_{{R\in {\mathscr D}}\atop{R\supset Q}}=\sum_{Q\in \s}\sum_{{U\in \s}\atop{U\supset Q}}\sum_{{R\in \mathscr D, \widehat R_{\s}=U}\atop{R\supset Q}}=\sum_{U\in \s}\sum_{{Q\in \s, R\in H_{\s}(U)}\atop{Q\subset R}}.$$

We shall now use this summation rearrangement with summands of the kind $a(d_{\md}(Q,R))\Psi(R)\chi_Q$,
where $a:{\mathbb Z}_+\to[0,\infty)$ is some reasonably fast decreasing function and $\Psi$ is some non-negative
quantity, which, after some change of notation, is exactly the kind of expression we have in (\ref{lastest}).

We observe, as before, that
$$\sum_{Q\in{\mathcal S}}\sum_{{R\in\md}\atop{R\supset Q}}a(d_{\md}(Q,R))\Psi(R)\chi_Q=
\sum_{U\in {\mathcal S}}\sum_{{Q\in\s, R\in H_{\s}(U)}\atop{Q\subset R}}a(d_{\md}(Q,R))\Psi(R)\chi_Q.$$
Since $R$ is now restricted to $H_{\s}(U)$, we can try to carry $\sup_{R\in H_{\s}(U)}\Psi(R)$ out of the inner sum
and look at the resulting bound
$$
\sum_{U\in\s}\Big(\sup_{R\in H_{\s}(U)}\Psi(R)\Big)
\sum_{{Q\in\s, R\in H_{\s}(U)}\atop{Q\subset R}}a(d_{\md}(Q,R))\chi_Q.
$$

It is now tempting to try to sum over $R$ in the inner sum taking into account that for a fixed $Q$, all distances $d_{\md}(Q,R),R\in\md,R\supset Q,$
are pairwise different, so we can get $\sum_{l=0}^{\infty}a(l)$ in the worst case scenario. This is not a bad idea and we will do exactly that with one
small but essential modification: we will notice that the condition $R\in H_{\s}(U)$ is much stronger than just $R\in\md$ and, since the cube $Q$
is also in $\s$ and $Q\subset R\subset U$, the distance $d_{\md}(R,Q)$ from $R\in H_{\s}(U)$ to $Q$ can never be less than $d_{\s}(U,Q)$.
Thus, splitting all possible cubes $Q\subset U$ in $\s$ according to their distance from $U$ in $\Gamma_{\s}$, we get the bound
$$\sum_{m\ge 0}
\sum_{{Q\in\s, R\in H_{\s}(U)}\atop{Q\subset R\subset U,d_{\s}(Q,U)=m}}a(d_{\md}(Q,R))\chi_Q
$$
for the inner sum.

Now, for fixed $m$, the sum
$$\sum_{{R\in H_{\s}(U)}\atop{R\supset Q}}a(d_{\md}(Q,R))$$
is still a sum of values of $a$ at distinct integers, but at this point we also know that none of them is less than $m$. Thus,
we get the bound $\sum_{l\ge m}a(l)$ for this sum instead of the sum over all $l\ge 0$. This may look like a small improvement
for each particular $Q$, but it becomes dramatic when we sum over $Q$ and take into account that for any fixed $m$,
$$\sum_{{Q\in\s, Q\subset U}\atop{d_{\s}(Q,U)=m}}\chi_Q\le \chi_U$$
(the cubes $Q$ in the summation are pairwise disjoint and are all contained in $U$).
We can now write
\begin{eqnarray*}
&&\sum_{m\ge 0}
\sum_{{Q\in\s, R\in H_{\s}(U)}\atop{Q\subset R\subset U,d_{\s}(Q,U)=m}}a(d_{\md}(Q,R))\chi_Q\le
\sum_{m\ge 0}
\sum_{{Q\in\s,d_{\s}(Q,U)=m}\atop{Q\subset U}}\Big(\sum_{l\ge m}a(l)\Big)\chi_Q\\
&&\le\sum_{m\ge 0}\Big(\sum_{l\ge m}a(l)\Big)\chi_U=\Big[\sum_{m\ge 0}(m+1)a(m)\Big]\chi_U.
\end{eqnarray*}
If $\beta=\sum_{m\ge 0}(m+1)a(m)<+\infty$, then we get the final estimate:
$$\sum_{Q\in{\mathcal S}}\sum_{{R\in\md}\atop{R\supset Q}}a(d_{\md}(Q,R))\Psi(R)\chi_Q\le \b
\sum_{U\in\s}\Big(\sup_{R\in H_{\s}(U)}\Psi(R)\Big)\chi_U.$$

\section{From Calder\'on-Zygmund operators to sparse operators}
The summation trick allows us to estimate $T[f_1,\dots,f_m]$ by
a sum of $3^n$ expressions of the kind
$$\sum_{Q\in\s}\Big(\sup_{R\in H_{\s}(Q)}\prod_{i=1}^m|f_i|_R\Big)\chi_Q$$
(one for each lattice $\md^{(j)}$)
under the additional assumption that $$\sum_{m=1}^{\infty} m\rho(2^{-m})<+\infty,$$ or, equivalently, $\int_0^1\rho(t)\log\frac{1}{t}\frac{dt}{t}<+\infty$. All classical operators have $\rho(t)=t^{\d}$ for some $\d>0$, so making this assumption hardly restricts the generality of the results we present.

Now we want to go one step further and replace the supremum by a single average. Of course, this is impossible if we keep
the original family $\s$ intact, but, since the only thing we know (or care) about is that $\s$ be Carleson, we can extend $\s$
in any way we want to a new family $\widetilde\s$ before applying the summation trick as long as the extension preserves
the Carleson property, and here is where the augmentation tool comes handy.

No matter how we extend, the top cube $Q$ of $H_{\widetilde S}(Q)$ will always remain in the supremum.
So, our task will be to make sure that for every $R\in H_{\widetilde\s}(Q)$, the product $\prod_{i=1}^m|f_i|_R$ is
controlled by $\prod_{i=1}^m|f_i|_Q$. This is exactly what the stopping time construction does if we choose the condition
$\mathcal P$ in an appropriate way. The most natural condition $\mathcal P(Q,Q')$ to introduce is the one that
$$\prod_{i=1}^m|f_i|_{Q'}\le K\prod_{i=1}^m|f_i|_Q$$
with some (large) $K\ge 1$. The products may be a bit unpleasant to handle directly, so we will ask for a bit more. Namely
we will say that ${\mathcal P}(Q,Q')$ holds if $|f_i|_{Q'}\le K^{1/m}|f_i|_Q$
for every $i=1,\dots,m$.
Then, for the augmented system $\widetilde S$, we will automatically have
$$\sup_{R\in H_{\widetilde \s}(Q)}\prod_{i=1}^m|f_i|_R\le K\prod_{i=1}^m|f_i|_Q$$
for every $Q\in\widetilde \s$ and our sum will get reduced to
$$\sum_{Q\in \widetilde\s}\prod_{i=1}^m|f_i|_Q\chi_Q.$$

Since we want to keep the family $\widetilde \s$ Carleson, we need to make sure that our condition
$\mathcal P$ is of Carleson type. Suppose that $Q_j'\subset Q$ are disjoint cubes for which $\mathcal P(Q,Q_j')$
fails. Then for each $j$, there exists $i(j)\in\{1,\dots,m\}$ with $|f_{i(j)}|_{Q_j'}> K^{1/m}|f_{i(j)}|_Q$.
Let $J_i$ be the set of $j$ for which $i(j)=i$. Then
\begin{eqnarray*}
K^{1/m}|f_i|_Q\sum_{j\in J_i}|Q_j'|&<& \sum_{j\in J_i}|f_i|_{Q_j'}|Q_j'|=\sum_{j\in J_i}\int_{Q_j'}|f_i|\\
&\le&\int_Q|f_i|=|f_i|_Q|Q|,
\end{eqnarray*}
whence
$\sum_{j\in J_i}|Q_j'|\le K^{-1/m}|Q|$ for each $i$  and $\sum_{j}|Q_j'|\le mK^{-1/m}|Q|.$
Thus, choosing $K=(2m)^m$, we get the desired Carleson type property for $\mathcal P$.

Now our strategy for the estimate is clear.
First, take each double sum
$$\sum_{Q\in \mathcal S_j}\sum_{{R\in\md^{(j)}}\atop{R\supset Q}}\rho(\ell_Q/\ell_R)\prod_{i=1}^m|f_i|_R\chi_Q,$$
and replace each Carleson family $\s_j$ by the augmented  family $\widetilde\s_j$, which is $2(6\cdot 3^n+1)$-Carleson by Lemmas \ref{aug} and \ref{stop}. We now have
$$\sup_{R\in H_{\wt\s_j}(Q)}\prod_{i=1}^m|f_i|_R\le (2m)^m\prod_{i=1}^m|f_i|_Q$$
for all $Q\in\wt\s_j$  and since $S_j\subset \widetilde S_j$, this replacement can only enlarge the double sum.
After that, we apply the summation trick, and, assuming that $\rho$ satisfies the condition $\int_0^1\rho(t)\log\frac{1}{t}\frac{dt}{t}<+\infty$, get the sum
$$\sum_{Q\in \widetilde\s_j}\prod_{i=1}^m|f_i|_Q\chi_Q$$
dominating the original double sum up to a constant factor.

This observation is worth singling out as a theorem. To state it clearer, let us make one definition first.

\begin{definition}\label{sparse}
Let $\md$ be any dyadic lattice and let $\s\subset\md$ be a Carleson family of cubes.
The $m$-linear sparse operator ${\mathcal A}_{\s,m}$ corresponding to the family $\s$ is defined by
$${\mathcal A}_{\s,m}[f_1,\dots,f_m]=\sum_{Q\in\s}\prod_{i=1}^m(f_i)_Q\chi_Q.$$
\end{definition}

\begin{theorem}\label{point}
Let $T$ be any $m$-linear Calder\'on-Zygmund operator with the modulus of continuity $\rho$ of the kernel
satisfying
$$\int_0^1\rho(t)\log\frac{1}{t}\frac{dt}{t}<+\infty.$$
Then, for every $f_1,\dots,f_m$, there exist $3^n$ dyadic lattices $\md^{(j)}$ and $14\cdot 3^n$-Carleson families
$\mathcal S_j\subset \md^{(j)}$ such that
$$|T[f_1,\dots,f_m]|\le C(T)\sum_{j=1}^{3^n}\mathcal A_{\s_j,m}[|f_1|,\dots,|f_m|]$$
almost everywhere. The constant $C(T)$ depends on $n,m,\rho$, and the constant in the weak type bound for $T$.
\end{theorem}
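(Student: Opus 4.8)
The plan is to assemble the machinery built up in the preceding sections; most of the argument has in fact already been carried out in the discussion above, so the work left is to collect the pieces and to keep careful track of the constants. Fix $\la=2^{-n-2}$. By the pointwise bound \eqref{lastest} (which rests on Theorem \ref{oscbound} and the Three Lattice Theorem), there are $3^n$ dyadic lattices $\md^{(j)}$ and regular $6\cdot 3^n$-Carleson families $\s_j\subset\md^{(j)}$ with
\[
|T[f_1,\dots,f_m]|\le C(n,\la)\sum_{j=1}^{3^n}\sum_{Q\in\s_j}\sum_{{R\in\md^{(j)}}\atop{R\supset Q}}\rho(\ell_Q/\ell_R)\Big(\prod_{i=1}^m|f_i|_R\Big)\chi_Q
\]
almost everywhere, where $C(n,\la)$ absorbs the weak type constant of $T$. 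The whole task is to dominate, for each $j$, the inner double sum by one sparse operator.

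First I would introduce on each $\md^{(j)}$ the stopping condition $\mathcal P$ defined by: $\mathcal P(Q,Q')$ holds precisely when $|f_i|_{Q'}\le 2m\,|f_i|_Q$ for all $i=1,\dots,m$ (the condition discussed just before the theorem, with $K=(2m)^m$). It holds on the diagonal, and the short computation above shows that pairwise disjoint $Q'_j\subset Q$ with $\mathcal P(Q,Q'_j)$ failing obey $\sum_j|Q'_j|\le m(2m)^{-1}|Q|=\frac12|Q|$; so by Lemma \ref{stop} every $\text{stop}(Q,\mathcal P)$ is $2$-Carleson. Augmenting $\s_j$ by the families $\text{stop}(Q,\mathcal P)$ yields a regular family $\widetilde\s_j\supset\s_j$ that, by Lemma \ref{aug}, is $2(6\cdot 3^n+1)$-Carleson, hence $14\cdot 3^n$-Carleson. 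By the key property of stopping-time augmentations, $\mathcal P(\widehat R_{\widetilde\s_j},R)$ holds for every $R\in\md^{(j)}$, so for every $Q\in\widetilde\s_j$ and $R\in H_{\widetilde\s_j}(Q)$ we get $\prod_{i=1}^m|f_i|_R\le(2m)^m\prod_{i=1}^m|f_i|_Q$, whence $\sup_{R\in H_{\widetilde\s_j}(Q)}\prod_{i=1}^m|f_i|_R\le(2m)^m\prod_{i=1}^m|f_i|_Q$.

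Since $\s_j\subset\widetilde\s_j$ and all terms are non-negative, enlarging the outer sum from $\s_j$ to $\widetilde\s_j$ only increases the right side. Next I would invoke the summation trick with $\Psi(R)=\prod_{i=1}^m|f_i|_R$ and $a(l)=\rho(2^{-l})$: since $Q\subset R$ inside one lattice forces $\ell_Q/\ell_R=2^{-d_{\md^{(j)}}(Q,R)}$, each summand is exactly $a(d_{\md^{(j)}}(Q,R))\Psi(R)\chi_Q$, and the hypothesis $\int_0^1\rho(t)\log\frac1t\frac{dt}{t}<\infty$ is equivalent to $\beta:=\sum_{l\ge0}(l+1)\rho(2^{-l})<\infty$. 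The trick then gives
\[
\sum_{Q\in\widetilde\s_j}\sum_{{R\in\md^{(j)}}\atop{R\supset Q}}\rho(\ell_Q/\ell_R)\Big(\prod_{i=1}^m|f_i|_R\Big)\chi_Q\le\beta\sum_{U\in\widetilde\s_j}\Big(\sup_{R\in H_{\widetilde\s_j}(U)}\prod_{i=1}^m|f_i|_R\Big)\chi_U\le\beta(2m)^m\sum_{U\in\widetilde\s_j}\prod_{i=1}^m|f_i|_U\,\chi_U,
\]
and the last sum is $\mathcal A_{\widetilde\s_j,m}[|f_1|,\dots,|f_m|]$ by Definition \ref{sparse}. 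Summing over $j$, and taking the families $\widetilde\s_j$ (renamed $\s_j$) as the required ones, yields the claim with $C(T)=C(n,\la)\,\beta\,(2m)^m$, depending only on $n$, $m$, $\rho$, and the weak type constant of $T$.

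I do not expect a serious obstacle: the substantive content — the oscillation-chain bound (Theorem \ref{oscbound}), the reduction of \eqref{omegabd} to finitely many dyadic sums via the Three Lattice Theorem, and the rearrangement behind the summation trick — is already established. The only points that need care are threading the Carleson constants correctly through Lemmas \ref{aug} and \ref{stop} to arrive at $14\cdot 3^n$, and confirming that the integral condition on $\rho$ is exactly the statement $\sum_l(l+1)\rho(2^{-l})<\infty$ needed to run the summation trick.
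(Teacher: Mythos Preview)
Your proposal is correct and follows essentially the same route as the paper: start from \eqref{lastest}, augment each $\s_j$ by the stopping families $\text{stop}(Q,\mathcal P)$ with $K=(2m)^m$ to obtain $2(6\cdot 3^n+1)\le 14\cdot 3^n$-Carleson families $\widetilde\s_j$, enlarge the outer sum to $\widetilde\s_j$, and then run the summation trick of Section~12 with $a(l)=\rho(2^{-l})$, using that the integral condition on $\rho$ is equivalent to $\sum_{l\ge 0}(l+1)\rho(2^{-l})<\infty$. The order of steps, the choice of stopping condition, and the bookkeeping of Carleson constants all match the paper's argument.
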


\section[Elementary weighted theory]{Elementary weighted theory of sparse multilinear forms}
The result of the previous section shows that, when building the theory of weighted norm inequalities for Calder\'on-Zygmund
operators, we can (at least, as a  first approximation) forget about the Calder\'on-Zygmund theory altogether and just
consider sparse operators ${\mathcal A}_{\s,m}$ instead. How much information will be lost on this way? This question is still
awaiting its answer because the fit is pretty tight but not perfectly tight. At this point we prefer to abstain from
discussing this issue in the hope that more light will be shed on it in the near future.

Now fix $\s$ and consider the simplest linear one-weight problem of finding a necessary and sufficient condition for ${\mathcal A}_{\s,1}$ to be bounded on $L^p(v)$, where $0< v\in L^1_{\text{loc}}$ is some weight and $p\in (1,+\infty)$.
Recall that
$$\|F\|_{L^p(v)}=\sup\left\{\Big|\int_{{\mathbb R}^n}Fgv\Big|:g\in L^{p'}(v), \|g\|_{L^{p'}(v)}\le 1\right\}$$
where $p'=\frac{p}{p-1}$ is the conjugate exponent. Thus, we are interested in establishing an inequality of the kind
\begin{eqnarray*}
\sum_{Q\in\s}f_Q(gv)_Q|Q|&=&\int_{{\mathbb R}^n}({\mathcal A}_{\s,1}f)gv\\
&\le&K\Big(\int_{{\mathbb R}^n}f^pv\Big)^{1/p} \Big(\int_{{\mathbb R}^n}g^{p'}v\Big)^{1/p'}
\end{eqnarray*}
for arbitrary non-negative functions $f,g$.

\subsection{The renormalization trick}
Now, we see one interesting thing: the integrals in the $L^{p'}$-norm of $g$ on the right and in the average $(gv)_Q$ are
taken with the same weight, but it is not so for $f$. Can we restore the symmetry between $f$ and $g$ in this respect?
The answer is ``yes". What we need to note is that if $w$ is any positive function, then $fw$ runs over all nonnegative
functions as $f$ does so, but the integrals on the left and on the right sides scale differently in $w$. So, our inequality
can be rewritten as
$$
\sum_{Q\in\s}(fw)_Q(gv)_Q|Q|
\le K\Big(\int_{{\mathbb R}^n}f^p(w^pv)\Big)^{1/p} \Big(\int_{{\mathbb R}^n}g^{p'}v\Big)^{1/p'}.
$$

Now, to restore a perfect symmetry between $f$ and $g$, we would like to find $w$ so that $w=w^pv$.
Solving this equation gives $w^{1/p}=wv^{1/p}$ or $w^{1-\frac 1p}v^{1-\frac1{p'}}=1$.
Thus, the one-weight linear inequality is equivalent to the 2-weight bilinear one
$$\sum_{Q\in\s}(f_1w_1)_Q(f_2w_2)_Q|Q|\le K
\Big(\int_{{\mathbb R}^n}f_1^{p_1}w_1\Big)^{1/p_1} \Big(\int_{{\mathbb R}^n}f_2^{p_2}w_2\Big)^{1/p_2}$$
with $\frac{1}{p_1}+\frac{1}{p_2}=1$ and the extra relation $w_1^{q_1}w_2^{q_2}\equiv 1$ ($q_i=1-\frac 1{p_i}$) for the weights.

The problem in this form, if we ignore for a moment all relations between $w_j$ and $p_j$, naturally generalizes to $m$ weights and functions.
To cover a few interesting cases that are not immediately transparent from the consideration of the one-weight linear problem
alone, we introduce a few more parameters and state the general multilinear weighted norm inequality as
\begin{equation}\label{first1}
\sum_{Q\in\s}\left(\prod_{i=1}^m(f_iw_i)_Q^{r_i}\right)|Q|\le K\prod_{i=1}^m\Big(\int_{{\mathbb R}^n}f_i^{p_i}w_i\Big)^{r_i/p_i}
\end{equation}
with $r_i>0,p_i>1$.
The reader who still wonders why it isn't
$$
\sum_{Q\in\s}\left(\prod_{i=1}^m(f_iu_i)_Q^{r_i}\right)|Q|\le K\prod_{i=1}^m\Big(\int_{{\mathbb R}^n}f_i^{p_i}v_i\Big)^{r_i/p_i}
$$
should recall the renormalization trick above and try carrying out the reduction of this seemingly more general version to (\ref{first1})
by himself.

\section{A digression: the dyadic maximal function}
For a weight $w$ and a measurable set $E\subset {\mathbb R}^n$
denote $w(E)=\int_Ew$.

Let $\md$ be a dyadic lattice. Given a weight $w$, define the weighted dyadic Hardy-Littlewood maximal operator  $M^{\md}_w$ by
$$M_{w}^{\mathscr{D}}f(x)=\sup_{Q\in {\mathscr{D}}: Q\ni x}\frac{1}{w(Q)}\int_Q|f|w.$$
The following result is known as the  Hardy-Littlewood dyadic maximal theorem.

\begin{theorem}\label{hldyad} The maximal operator $M_{w}^{\mathscr{D}}$ satisfies the following properties:
\begin{equation}\label{maxpr1}
w\{x\in {\mathbb R}^n: M_{w}^{\mathscr{D}}f(x)>\a\}\le \frac{1}{\a}\|f\|_{L^1(w)}\quad(\a>0)
\end{equation}
and
\begin{equation}\label{maxpr2}
\|M_{w}^{\mathscr{D}}f\|_{L^p(w)}\le \frac{p}{p-1}\|f\|_{L^p(w)}\quad(1<p\le \infty).
\end{equation}
\end{theorem}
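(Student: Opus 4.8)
\medskip

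The plan is to establish the weak type bound \eqref{maxpr1} first by the usual maximal-cube decomposition of the level set, and then to bootstrap it to the strong bound \eqref{maxpr2} by a layer-cake computation that recovers the sharp constant $\tfrac{p}{p-1}$. For \eqref{maxpr1}, fix $\a>0$, set $E_\a=\{x:M_w^{\md}f(x)>\a\}$, and assume (as we may, the bound being trivial otherwise) that $\|f\|_{L^1(w)}<\infty$. For each $x\in E_\a$ there is a cube $Q\in\md$ with $x\in Q$ and $\frac1{w(Q)}\int_Q|f|w>\a$, and then the whole cube $Q$ lies in $E_\a$. The only new wrinkle is that $\md$ has no top cube, so among such cubes through a given $x$ there need not be a largest one. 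Here I would use the following dichotomy: all cubes of $\md$ containing a fixed cube form a single nested chain that exhausts $\mathbb R^n$ (this is the ``neat way'' property recorded right after Definition \ref{dyadlat}; it follows from (DL-2) and (DL-3)), so either for some $x\in E_\a$ there are arbitrarily large cubes $Q\ni x$ with $w$-average of $|f|$ over $Q$ exceeding $\a$, in which case those cubes cover $\mathbb R^n$, so $E_\a=\mathbb R^n$ up to a null set and $\|f\|_{L^1(w)}=\int_{\mathbb R^n}|f|w\ge\a\,w(\mathbb R^n)=\a\,w(E_\a)$; or for every $x\in E_\a$ there \emph{is} a largest such cube, and these maximal cubes $\{Q_i\}$ are pairwise disjoint (by dyadic nesting) and cover $E_\a$, so $\a\,w(E_\a)=\sum_i\a\,w(Q_i)\le\sum_i\int_{Q_i}|f|w=\int_{E_\a}|f|w$. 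In both cases we get the \emph{refined} estimate $\a\,w(E_\a)\le\int_{E_\a}|f|w$, which yields \eqref{maxpr1} at once and will also drive the $L^p$ bound.

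The endpoint $p=\infty$ in \eqref{maxpr2} is immediate: every average $\frac1{w(Q)}\int_Q|f|w$ is at most $\|f\|_{L^\infty}$, whence $M_w^{\md}f\le\|f\|_{L^\infty(w)}$ and $\tfrac{p}{p-1}=1$. For $1<p<\infty$ I would run the classical layer-cake argument off the refined weak bound: writing $M=M_w^{\md}f$ and using Tonelli,
\[
\int M^pw=p\int_0^\infty\a^{p-1}w(E_\a)\,d\a\le p\int_0^\infty\a^{p-2}\Bigl(\int_{E_\a}|f|w\Bigr)d\a=\frac{p}{p-1}\int_{\mathbb R^n}|f|\,M^{p-1}\,w,
\]
and then Hölder with exponents $p$ and $p'=\tfrac p{p-1}$ (note $(p-1)p'=p$) gives $\int M^pw\le\frac p{p-1}\|f\|_{L^p(w)}\bigl(\int M^pw\bigr)^{1/p'}$, hence $\|M\|_{L^p(w)}\le\frac p{p-1}\|f\|_{L^p(w)}$ \emph{provided} $\|M\|_{L^p(w)}<\infty$.

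The one genuinely delicate point is exactly that a priori finiteness, needed to cancel $\bigl(\int M^pw\bigr)^{1/p'}$ from both sides. I would handle it by the standard truncation: apply the argument not to $f$ but to $f_N=\min(|f|,N)\chi_{B_N}$ (with $B_N$ the ball of radius $N$), for which $\|f_N\|_{L^1(w)}\le N\,w(B_N)<\infty$ and $M_w^{\md}f_N\le N$ everywhere, so that $\int(M_w^{\md}f_N)^pw=p\int_0^N\a^{p-1}w(\{M_w^{\md}f_N>\a\})\,d\a\le\frac p{p-1}N^{p-1}\|f_N\|_{L^1(w)}<\infty$ by the weak bound. The cancellation is now legitimate and yields $\|M_w^{\md}f_N\|_{L^p(w)}\le\frac p{p-1}\|f_N\|_{L^p(w)}\le\frac p{p-1}\|f\|_{L^p(w)}$; since $f_N\uparrow|f|$ pointwise we have $M_w^{\md}f_N\uparrow M_w^{\md}f$, and monotone convergence finishes the proof. (Alternatively, \eqref{maxpr1} together with the trivial $L^\infty$ bound already give $L^p(w)$-boundedness of $M_w^{\md}$ with \emph{some} finite constant by Marcinkiewicz interpolation, which supplies the needed finiteness directly, and the layer-cake computation then merely upgrades the constant to the sharp $\tfrac p{p-1}$.)
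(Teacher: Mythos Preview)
Your proof is correct and follows the same skeleton as the paper's: the refined weak-type estimate $\alpha\,w(E_\alpha)\le\int_{E_\alpha}|f|w$ via maximal cubes, then the layer-cake computation plus H\"older to get the sharp constant $\tfrac{p}{p-1}$, with a truncation of $f$ and monotone convergence to justify the cancellation.

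The one genuine difference is how you handle the absence of a top cube in $\md$. You keep $M_w^{\md}$ throughout and resolve the possible non-existence of maximal cubes by your dichotomy (either some point sees arbitrarily large good cubes, whence $E_\alpha=\mathbb R^n$ and the bound follows by passing to the limit along the ascending chain, or every point has a maximal cube and the standard disjointness argument applies). The paper instead sidesteps the issue entirely: it first restricts to an arbitrary \emph{finite} subfamily $\mathcal F\subset\md$, proves both \eqref{maxpr1} and \eqref{maxpr2} for the truncated operator $M_w^{\mathcal F}$ (where maximal cubes trivially exist and $M_w^{\mathcal F}f$ is supported on a bounded set), and then lets $\mathcal F\uparrow\md$ by monotone convergence. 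This buys a cleaner weak-type argument with no case split, and it also makes the finiteness of $\int(M_w^{\mathcal F}f)^p w$ for bounded compactly supported $f$ automatic. Your route is equally valid and perhaps more self-contained, at the cost of the extra dichotomy.
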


\begin{proof}
Let $\mathcal F\subset \md$ be any finite family of cubes. Consider the restricted maximal function
$$
M_w^{\mathcal F}f= \begin{cases} \max\{\frac{1}{w(Q)}\int_Q|f|w: Q\in\mathcal F, Q\ni x\}, &x\in \cup_{Q\in\mathcal F}Q \\
0,& \text{otherwise}. \end{cases}
$$
By the monotone convergence theorem, it suffices to prove (\ref{maxpr1}) and (\ref{maxpr2}) for $M_w^{\mathcal F}$.

For $\a>0$, let
$$\O_{\a}=\{x\in {\mathbb R}^n: M_{w}^{\mathcal F}f(x)>\a\}.$$
Then $\O_{\a}$ is just the union of the maximal cubes $Q_j\in\mathcal F$ with the property that
$\int_{Q_j}|f|w >\a w(Q_j)$. Since $Q_j$ are disjoint, we get
\begin{equation}\label{woa}
w(\O_{\a})=\sum_jw(Q_j)\le \frac{1}{\a}\sum_j\int_{Q_j}|f|w=\frac{1}{\a}\int_{\O_{\a}}|f|w.
\end{equation}
This, obviously, implies the weak type bound for $M_w^{\mathcal F}$.

To get the $L^p(w)$-bound for $1<p<\infty$ (the remaining case $p=\infty$ is obvious),
just integrate (\ref{woa}) with the weight $p\a^{p-1}$:
\begin{eqnarray*}
\|M_{w}^{\mathcal F}f\|_{L^p(w)}^p&=&p\int_0^{\infty}\a^{p-1}w({\O_{\a}})d\a\le p\int_{0}^{\infty}\a^{p-2}\left(\int_{\O_{\a}}|f|w\right) d\a\\
&=&
p\iint_{\{(\alpha,x) :
0<\alpha<M_w^{\mathcal F}(x)\}}\alpha^{p-2}|f(x)|w(x)\,dx\,d\alpha\\
&=&
\frac{p}{p-1}\int_{{\mathbb R}^n}(M_{w}^{\mathscr{D}}f)^{p-1}|f|w\\
&\le& \frac{p}{p-1}\left(\int_{{\mathbb R}^n}
(M_w^{\mathcal F}f)^pw\right)^{\frac{p-1}{p}}\left(\int_{{\mathbb R}^n}|f|^pw\right)^{1/p}.
\end{eqnarray*}

Assuming that $f$ is bounded and compactly supported (so all integrals in the last inequality
are finite), we conclude that
$$
\left(\int_{{\mathbb R}^n}
(M_w^{\mathcal F}f)^pw\right)^{1/p}\le \frac{p}{p-1}\left(\int_{{\mathbb R}^n}|f|^pw\right)^{1/p}.
$$
For an arbitrary function $f\in L^p(w)$, just consider the truncated functions
$$
f_t(x) = \begin{cases} f(x), &|x|<t, |f(x)|<t\\
0& \text{otherwise} \end{cases}
$$
and use the monotone convergence theorem with $t\to\infty$.
\end{proof}

\section{A bound for weighted sparse multilinear forms}
We now return to the questions of when the inequality
\begin{equation}\label{first}
\sum_{Q\in\s}\left(\prod_{i=1}^m(f_iw_i)_Q^{r_i}\right)|Q|\le K\prod_{i=1}^m\Big(\int_{{\mathbb R}^n}f_i^{p_i}w_i\Big)^{r_i/p_i},
\end{equation}
 holds for all nonnegative $f_i\in L^{p_i}(w_i)$ with some $K>0$ and of how to find a good estimate for $K$ in terms of some
reasonable quantities that can be computed directly in terms of the weights $w_i$.
We will make an additional assumption that $\sum_{i=1}^m \frac{r_i}{p_i}=1$, which is a direct generalization of the relation $\frac 1{p_1}+\frac 1{p_2}=1$ in our main motivating bilinear
case discussed above.

The obvious necessary condition can be obtained by taking a cube $Q\in \s$ and putting $f_i=\chi_Q$ for all $i$. Then,
ignoring all terms in the sum on the left except the one corresponding to $Q$, we get
$$\prod_{i=1}^m(w_i)_Q^{q_i}\le K$$
with $q_i=r_i(1-\frac1{p_i})$ for all $Q\in\s$. We call this condition the joint $(\s;q_1,\dots,q_m)$-Muckenhoupt condition for
the system of weights $w_i$ and denote
$$
[w_1,\dots,w_m]_{\s;q_{1},\dots,q_m}=\sup_{Q\in\s}\prod_{i=1}^m(w_i)_Q^{q_i}\,.
$$
Note that if we want our bound to hold for {\em all} sparse multilinear forms corresponding to all possible
sparse families $\s$ with fixed sparseness constant $\eta>0$, we need our condition to hold for
all cubes $Q\subset\mathbb R^n$, so it is natural to define the full  joint $(q_1,\dots,q_m)$-Muckenhoupt condition as
$$
[w_1,\dots,w_m]_{q_{1},\dots,q_m}=\sup_{Q\subset\mathbb R^n, Q\text{ is a cube }}\prod_{i=1}^m(w_i)_Q^{q_i}<+\infty\,.
$$

The obvious way to use the sparseness of $\s$ is to switch to the disjoint sets $E(Q)$. Note that we have no guarantee that $f_i$
are not small on $E(Q)$. However, we can be sure that the dyadic maximal functions $M^{\md}_{w_i}f_i$ satisfy
$M^{\md}_{w_i}f_i\ge\frac{(f_iw_i)_Q}{(w_i)_Q}$ on $E(Q)$.
Thus, using that $\sum_{i=1}^m\frac{r_i}{p_i}=1$ again, we obtain the trivial H\"older bound
\begin{eqnarray}
&&\prod_{i=1}^m\Big(\int_{{\mathbb R}^n}(M^{\md}_{w_i}f_i)^{p_i}w_i\Big)^{r_i/p_i}\ge \prod_{i=1}^m\Big(\sum_{Q\in\s}\int_{E(Q)}
(M^{\md}_{w_i}f_i)^{p_i}w_i\Big)^{r_i/p_i}\label{second}\\
&&\ge \prod_{i=1}^m\Big(\sum_{Q\in\s}\Big(\frac{(f_iw_i)_Q}{(w_i)_Q}\Big)^{p_i}w_i(E(Q))\Big)^{r_i/p_i}\nonumber\\
&&\ge \sum_{Q\in{\mathcal S}}\prod_{i=1}^m\frac{(f_iw_i)_Q^{r_i}}{(w_i)_Q^{r_i}}w_i(E(Q))^{r_i/p_i}.\nonumber
\end{eqnarray}

By the Hardy-Littlewood dyadic maximal theorem, we see that the left hand side of (\ref{second}) is dominated by the right hand side of (\ref{first}).
A good chunk of the classical theory of weighted norm inequalities can be derived from comparing the right hand side of (\ref{second}) with the left hand side of
(\ref{first}).

The direct multilinear analogue of the linear one-weight case is the situation where the weights $w_i$ are related by $\prod_{i=1}^mw_i^{q_i}\equiv 1$.
Write
\begin{eqnarray*}
\left(\prod_{i=1}^m(f_iw_i)_Q^{r_i}\right)|Q|&=&\prod_{i=1}^m\frac{(f_iw_i)_Q^{r_i}}{(w_i)_Q^{r_i}}w_i(E(Q))^{r_i/p_i}\times\prod_{i=1}^m(w_i)_Q^{q_i}\\
&\times&
\left(\prod_{i=1}^m\Big(\frac{(w_i)_Q}{w_i(E(Q))}\Big)^{r_i/p_i}\right)|Q|.
\end{eqnarray*}

The first factor is just the quantity on the right hand side of (\ref{second}); the second factor is bounded by $[w_1,\dots,w_m]_{\s;q_1,\dots,q_m}$, so
the whole game is in getting a decent bound for the last factor.
Fortunately, in the case under consideration, this factor cannot be too large. Put $q=\sum_{i=1}^m q_i$. Then, by H\"older's inequality,
$$\prod_{i=1}^mw_i(E(Q))^{q_i/q}\ge \int_{{E(Q)}}\prod_{i=1}^mw_i^{q_i/q}=|E(Q)|\ge \eta|Q|.$$
Thus, we always have
\begin{equation}\label{prodi}
\prod_{i=1}^m\Big(\frac{w_i(E(Q))}{|Q|}\Big)^{q_i}\ge \eta^q.
\end{equation}
Denoting $\a_i(Q)=\frac{w_i(E(Q))}{|Q|}$ and $\b_i(Q)=\frac{w_i(Q)}{|Q|}\ge\a_i(Q)$,
we see that the product we need to estimate can be written as $\prod_{i=1}^m\Big(\frac{\b_i(Q)}{\a_i(Q)}\Big)^{r_i/p_i}$ and
the Muckenhoupt condition along with (\ref{prodi}) implies that
\begin{eqnarray*}
\prod_{i=1}^m\Big(\frac{\b_i(Q)}{\a_i(Q)}\Big)^{q_i}&=&\prod_{i=1}^m\Big(\frac{|Q|}{w_i(E(Q))}\Big)^{q_i}\prod_{i=1}^m(w_i)_Q^{q_i}\\
&\le&\frac{1}{\eta^q}[w_1,\dots,w_m]_{\s;q_1,\dots,q_m}.
\end{eqnarray*}
Since each ratio $\frac{\b_i(Q)}{\a_i(Q)}$ is at least 1, we conclude that
$$\prod_{i=1}^m\Big(\frac{\b_i(Q)}{\a_i(Q)}\Big)^{r_i/p_i}\le (\eta^{-q}[w_1,\dots,w_m]_{\s;q_1,\dots,q_m})^{\max\limits_{1\le i\le m}\frac{r_i}{p_iq_i}}$$
and, thereby,
\begin{equation}\label{kbound}
K\le \eta^{-\tau}\prod_{i=1}^m\Big(\frac{p_i}{p_i-1}\Big)^{r_i}[w_1,\dots,w_m]_{\s;q_1,\dots,q_m}^{\gamma}
\end{equation}
where
$$
\gamma=\max_{1\le i\le m}\big(1+\frac{r_i}{p_iq_i}\big)
=\max_{1\le i\le m}\big(1+\frac{r_i}{p_ir_i(1-\frac 1{p_i})}\big)=\max_{1\le i\le m}p_i'
$$
and
$$
\tau=q\max_{1\le i\le m}\frac{r_i}{p_iq_i}=q\max_{1\le i\le m}\frac{r_i}{p_ir_i(1-\frac 1{p_i})}
=q\max_{1\le i\le m}\frac1{p_i-1}\,.
$$

Observe that the $L^p(w)$ operator norm of $M_w^{\md}$ blows up as $p\to 1$, but approaches $1$ as $p\to+\infty$.
Thus, despite the argument above has been carried out for finite $p$, the same inequality
\begin{eqnarray*}
&&\sum_{Q\in\s}\prod_{i=1}^m(f_iw_i)_Q^{r_i}|Q|\\
&&\le\eta^{-\tau}\prod_{i=1}^m\Big(\frac{p_i}{p_i-1}\Big)^{r_i}
[w_1,\dots,w_m]_{\s;q_1,\dots,q_m}^{\gamma}\prod_{i=1}^m\|f_i\|_{L^{p_i}(w_i)}
\end{eqnarray*}
can be obtained for the case when some of $p_i$ are equal to $+\infty$ either by repeating the proof, or just by passing to the limit.

Note that the passage to the limit is not completely trivial because one has to be careful to make sure that all the related quantities, indeed, do tend to what one wants them to tend to and that the conditions  $\sum_{i=1}^m\frac {r_i}{p_i}=1$ and $\prod_{i=1}^m w_i^{q_i}\equiv 1$ hold all the way through. The easiest way to ensure that all the limits
exist and  are correct is to assume that
$\s$ is finite and all test functions are compactly supported and bounded, and to let the monotone convergence theorem  take care of the general case.
To  preserve the conditions, one can put, for example,
$\frac{1}{p_i(t)}=\frac 1{p_i}+t\left(1-\frac 1{p_i}\right)$ and $r_i(t)=\psi(t)r_i$ with $\psi(t)=\left(1-t+t\sum_{i=1}^m r_i\right)^{-1}$ and  $t$ tending to $0+$.

\section{The weighted $L^p$-estimates}
We are now ready to consider the action of ${\mathcal A}_{\s,m}$ from
$L^{p_1}(v_1)\times\dots\times L^{p_{m}}(v_{m})$ to $L^p(v)$. Since we are ultimately aiming at transferring the result we will obtain
to the case of multilinear Calder\'on-Zygmund operators, we will be interested in the uniform bounds for the whole family of sparse operators
with fixed sparseness constant $\eta>0$  here, rather
than in the action of each individual one like it was in the previous section.

Note that $\mathcal S=\md_k$ is a $1$-Carleson family, and that the corresponding
 sparse operators ${\mathcal A}_{{\md}_k,m}$
converge to the ``trivial" multilinear operator
$$T_0[f_1,\dots,f_{m}]=f_1\dots f_{m}$$
as $k\to\infty$.
Thus, to get a natural setup, we want to be sure that, at least, $T_0$ acts as a bounded multilinear operator from
$L^{p_1}(v_1)\times\dots\times L^{p_{m}}(v_{m})$ to $L^p(v)$.

Observe that the product of $m$ functions $f_i\in L^{p_i}(v_i)$ of norm $1$ is an arbitrary  function from the unit ball of $L^p(v)$ with $p$
given by $\frac{1}{p}=\sum_{i=1}^{m}\frac{1}{p_i}$ and $v=\prod_{i=1}^{m}v_i^{p/p_i}$.
Thus, the best we can hope for is that the entire family of sparse operators $A_{\s,m}$
acts from $L^{p_1}(v_1)\times\dots\times L^{p_{m}}(v_{m})$ to $L^p(v)$ with these particular
$p$ and $v$. If the families of weights $v_i$ and powers $p_i$ satisfy this property, then any other
statement about the operator boundedness of ${\mathcal A}_{\s,m}$ from
$L^{p_1}(v_1)\times\dots\times L^{p_{m}}(v_{m})$ to some weighted $L^p$-space either follows from it, or is false.

We will now try to reduce this case to the estimates for weighted multilinear forms obtained in the previous
section using duality and renormalization. Since the duality trick increases the number of functions
and integrations by $1$, the action of ${\mathcal A}_{\s,m}$ in weighted spaces is equivalent to the
boundedness of some weighted $m+1$-form. Note also that for $m>1$, we can easily have $p<1$ even
under our assumption that all $p_i>1$,
so the duality argument has to be applied with some caution.

Assume first that $p>1$. Then we can take $p_{m+1}=p', r_i=1$ (so $q_i=1-\frac{1}{p_i}$ for $i\le m$ and $q_{m+1}=\frac{1}{p}$). Note that with this choice, $\sum_{i=1}^{m+1}\frac{r_i}{p_i}=\frac 1p+\frac 1{p'}=1$.
The renormalization trick implies that
$\|{\mathcal A}_{\s,m}\|_{L^{p_1}(v_1)\times\dots\times L^{p_{m}}(v_{m})\to L^p(v)}\le K$ if and only if
the estimate
$$
\int_{{\mathbb R}^n}{\mathcal A}_{\s,m}[f_1w_1,\dots,f_{m}w_{m}]f_{m+1}w_{m+1}
\le
K\prod_{i=1}^{m+1}\|f_i\|_{L^{p_i}(w_i)},
$$
holds for all non-negative functions $f_i\in L^{p_i}(w_i)$
with $w_i^{1-p_i}=v_i\,(i=1,\dots, m)$ and $w_{m+1}=v$. Observe now that in this case,
the relation $v=\prod_{i=1}^{m}v_i^{p/p_i}$
becomes $w_{m+1}=\prod_{i=1}^{m}w_i^{\frac{p}{p_i}-p}$ or $w_{m+1}^{1/p}\prod_{i=1}^{m}w_i^{1-1/p_i}\equiv 1$,
which is exactly the additional relation we imposed on the weights when proving (\ref{kbound}).
Thus, the results of the previous section yield
$$
\|{\mathcal A}_{\s,m}\|_{L^{p_1}(v_1)\times\dots\times L^{p_{m}}(v_{m})\to L^p(v)}\le
C(\eta,p_i)[w_1,\dots,w_{m+1}]_{q_1,\dots,q_{m+1}}^{\gamma}
$$
with
$$\gamma=\max_{1\le i\le m+1}p_i'=\max(p_1',\dots,p_{m}',p).$$

In the case $p\le 1$, we put $p_{m+1}=\infty, r_i=p$ for $i=1,\dots,m$, and $r_{m+1}=1$. We still have
$\sum_{i=1}^{m+1}\frac{r_i}{p_i}=\sum_{i=1}^{m}\frac p{p_i}+\frac 1{\infty}=1+0=1$.
Now $q_i=p(1-\frac 1{p_i})$ for $i\le m,$
$q_{m+1}=1$, and we see that to apply the results of the previous section, we need the relation $w_{m+1}\prod_{i=1}^{m}w_i^{(1-\frac 1{p_i})p}\equiv 1$, which is the same as before.
Our inequality then becomes
\begin{eqnarray*}
&&\sum_{Q\in \s}\left(\prod_{i=1}^{m}(f_iw_i)_Q^p(f_{m+1}w_{m+1})_Q\right)|Q|\\
&&\le C(\eta,p_i)
[w_1,\dots,w_{m+1}]_{q_1,\dots,q_{m+1}}^{\gamma}\Big(\prod_{i=1}^{m}\|f_i\|_{L^{p_i}(w_i)}^p\Big)\|f_{m+1}\|_{L^{\infty}(w_{m+1})}.
\end{eqnarray*}
or, equivalently,
\begin{eqnarray*}
&&\int {\mathcal A}_{p,\s,m}[f_1w_1,\dots,f_{m}w_{m}]w_{m+1}\\
&&\le
C(\eta,p_i)[w_1,\dots,w_{m+1}]_{q_1,\dots,q_{m+1}}^\gamma\prod_{i=1}^{m}\|f_i\|^p_{L^{p_i}(w_i)}
\end{eqnarray*}
where
$$
{\mathcal A}_{p,\s,m}[f_1,\dots,f_{m}]=\sum_{Q\in\s}\prod_{i=1}^{m}(f_i)_Q^p\chi_Q\,.
$$
Since $p\le 1$, we have ${\mathcal A}_{\s,m}[f_1,\dots,f_{m}]^p\le {\mathcal A}_{p,\s,m}[f_1,\dots,f_{m}]$ pointwise
for any non-negative functions $f_i$. Hence, in this case, we get
\begin{eqnarray*}
&&\|{\mathcal A}_{\s,m}[f_1w_1,\dots,f_{m}w_{m}]\|_{L^p(w_{m+1})}\\
&&\le
C(\eta,p_i)[w_1,\dots,w_{m+1}]_{q_1,\dots,q_{m+1}}^{\gamma/p}\prod_{i=1}^{m}\|f_i\|_{L^{p_i}(w_i)}
\end{eqnarray*}
with
$$\gamma=\max(p_1',\dots,p_{m}',1)\,.$$

Note that the algebraic
expressions for the numbers $q_i$ in the case $p\le 1$ are exactly $p$ times the corresponding expressions in
the case $p>1$, so writing everything directly in terms of $p_i$, we see that the Muckenhoupt factor is
$$
[w_1,\dots,w_{m+1}]_{1/p_1',\dots,1/p_{m}',1/p}^{\max(p_1',\dots,p_{m}',p)}
$$
in both cases.

By Theorem \ref{point}, these estimates can be immediately extended to multilinear Calder\'on-Zygmund operators and we obtain the following

\begin{theorem}
\label{mainthm}
Let $T$ be a multilinear Calder\'on-Zygmund operator with the modulus $\rho$ of continuity
of the kernel satisfying
$$\int_0^1\rho(t)\log\frac 1t \frac{dt}{t}<+\infty.$$
Let $v_i$ be any weights and $p_i>1$ be
any numbers ($i=1,\dots,m$). Define $p>0$ by $\frac 1p=\frac{1}{p_1}+\dots+\frac 1{p_{m}}$ and put $v=\prod_{i=1}^{m}v_i^{p/p_i}$,
$w_{m+1}=v$, $w_i=v_i^{-1/(p_i-1)}$ ($i=1,\dots,m$). Then
\begin{eqnarray*}
&&\|T\|_{L^{p_1}(v_1)\times\dots\times L^{p_{m}}(v_{m})\to L^p(v)}\le C(T,p_i)[w_1,\dots,w_{m+1}]_{1/p_1',\dots,1/p_{m}',1/p}^{\max(p_1',\dots,p_{m}',p)}\\
&&=C(T,p_i)\left[\sup_{Q\subset \mathbb R^n,Q\text{ \rm is a cube}} v_Q^{1/p}\prod_{i=1}^{m} (v_i^{-p_i'/p_i})_Q^{1/p_i'}\right]^{\max(p_1',\dots,p_{m}',p)}\,.
\end{eqnarray*}
\end{theorem}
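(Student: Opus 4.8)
The plan is to deduce the theorem from the pointwise domination of $T$ by sparse operators (Theorem~\ref{point}) together with the uniform weighted bound for sparse multilinear forms obtained just above; essentially no new argument is required, only the bookkeeping of constants and the translation of the joint Muckenhoupt quantity back into the language of the original weights $v_i$. Throughout one works with test functions $f_i\in L^\infty_0$, as agreed.

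First I would invoke Theorem~\ref{point}: since $\rho$ satisfies the stated integral condition, there are $3^n$ dyadic lattices $\md^{(j)}$ and $14\cdot 3^n$-Carleson families $\s_j\subset\md^{(j)}$ --- hence, by the equivalence of the Carleson and sparseness conditions, $\eta$-sparse families with the \emph{fixed} constant $\eta=\frac1{14\cdot 3^n}$ --- such that $|T[f_1,\dots,f_m]|\le C(T)\sum_{j=1}^{3^n}\mathcal A_{\s_j,m}[|f_1|,\dots,|f_m|]$ almost everywhere. Taking $L^p(v)$ (quasi-)norms and using the triangle inequality when $p\ge1$, or the $p$-triangle inequality $\|\sum_j g_j\|_{L^p}^p\le\sum_j\|g_j\|_{L^p}^p$ when $p<1$, reduces matters to a bound for $\|\mathcal A_{\s,m}[|f_1|,\dots,|f_m|]\|_{L^p(v)}$ that is uniform over all $\eta$-sparse families $\s$ with this fixed $\eta$.

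That uniform bound is precisely what was proved in the preceding two sections. Via the renormalization trick one substitutes $f_i=g_iw_i$ with $w_i=v_i^{-1/(p_i-1)}=v_i^{-p_i'/p_i}$ for $i\le m$ and $w_{m+1}=v$, and via duality one passes to an $(m+1)$-linear weighted form; when $p>1$ one takes $p_{m+1}=p'$ and $r_i=1$ for all $i$, and when $p\le1$ one takes $p_{m+1}=\infty$, $r_i=p$ for $i\le m$, $r_{m+1}=1$, using in addition the pointwise inequality $\mathcal A_{\s,m}[\cdot]^p\le\mathcal A_{p,\s,m}[\cdot]$. In both cases $\sum_{i=1}^{m+1}\frac{r_i}{p_i}=1$ and the structural relation $\prod_{i=1}^{m+1}w_i^{q_i}\equiv1$ holds, with $q_i=r_i(1-\frac1{p_i})$, i.e.\ $q_i=1/p_i'$ for $i\le m$ and $q_{m+1}=1/p$, so the estimate (\ref{kbound}) applies and yields
$$\|\mathcal A_{\s,m}\|_{L^{p_1}(v_1)\times\dots\times L^{p_m}(v_m)\to L^p(v)}\le \eta^{-\tau}\,C(p_i)\,[w_1,\dots,w_{m+1}]_{q_1,\dots,q_{m+1}}^{\ga},$$
with $\ga=\max(p_1',\dots,p_m',p)$, uniformly over $\eta$-sparse $\s$. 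Since $\eta$ is a fixed constant, $\eta^{-\tau}$ is absorbed into $C(T,p_i)$. Finally, replacing the supremum over $\s$ by the supremum over all cubes of $\mathbb R^n$ (which only enlarges the constant) and unwinding $w_i=v_i^{-p_i'/p_i}$, $w_{m+1}=v$ gives $(w_{m+1})_Q^{1/p}=v_Q^{1/p}$ and $(w_i)_Q^{1/p_i'}=(v_i^{-p_i'/p_i})_Q^{1/p_i'}$, which is exactly the displayed form in the statement.

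The only points deserving care --- the ``main obstacles'', such as they are --- are (i) keeping the sparseness constant $\eta$ independent of the $f_i$, so that its contribution can be swallowed by the final constant, and (ii) in the case $p\le1$, verifying that the duality step through $\mathcal A_{p,\s,m}$ preserves both the normalization $\sum_i r_i/p_i=1$ and the weight relation $\prod_i w_i^{q_i}\equiv1$, so that the estimate of the previous section genuinely applies; as observed there, the cleanest route is to first assume $\s$ finite and the $f_i$ bounded with compact support, run the argument, and then let the monotone convergence theorem handle the general case.
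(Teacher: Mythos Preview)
Your proposal is correct and follows essentially the same route as the paper: pointwise domination by sparse operators via Theorem~\ref{point}, then the weighted form estimate~(\ref{kbound}) applied through the renormalization/duality split into the cases $p>1$ and $p\le 1$. The only imprecision is in the $p\le 1$ bookkeeping: with $r_i=p$ and $p_{m+1}=\infty$ one actually gets $q_i=p/p_i'$ for $i\le m$, $q_{m+1}=1$, and the exponent $\gamma/p$ with $\gamma=\max(p_1',\dots,p_m',1)$; the displayed form $[w_1,\dots,w_{m+1}]_{1/p_1',\dots,1/p_m',1/p}^{\max(p_1',\dots,p_m',p)}$ then follows after the homogeneity observation $[w]_{p\cdot q}=[w]_q^{\,p}$ and the fact that $p\le 1<p_i'$ forces $\max(p_1',\dots,p_m',1)=\max(p_1',\dots,p_m',p)$.
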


The only remark that remains to make to juxtapose our statement of this theorem with the way it is usually
written in the literature is that $\sup_{Q\subset \mathbb R^n,Q\text{ \rm is a cube}} v_Q^{1/p}\prod_{i=1}^{m} (v_i^{-p_i'/p_i})^{1/p_i'}$ is usually denoted by
$[\vec v]_{A_{\vec P}}^{1/p}$. In particular, in the linear case, when $\vec v=v, \vec P=p$, we obtain
$$\|T\|_{L^p(v)\to L^p(v)}\le C(T,p)[v]_{A_p}^{\max(p'/p,1)}=C(T,p)[v]_{A_p}^{\max(\frac{1}{p-1},1)}\,.$$

\vskip 1cm
\begin{center}
{\bf Historical notes}
\end{center}

Multilinear Calder\'on-Zygmund operators in the form considered in this book probably first appeared in Coifman and Meyer \cite{CM1}, but their systematic study was started by Grafakos and Torres \cite{GT1}.

For a classical (linear) theory of $A_p$ weights we refer the reader to Garc\'ia-Cuerva and Rubio de Francia \cite{GR} and to Grafakos \cite{G}. Muckenhoupt families of weights for multilinear operators
were introduced in Lerner et al. \cite{LOPTT}.

The main result of Section 10,
Theorem \ref{oscbound}, has been well known in the local form, i.e., for functions defined on some cube $Q\subset {\mathbb R}^n$ (see, for instance, Hyt\"onen \cite{Hyt2}) .

The linear case of Theorem \ref{mainthm} with $p=2$ has become known as the ``$A_2$ conjecture''. It was first proved
for the Hilbert transform by Petermichl \cite{P} and for general Calder\'on-Zygmund operators by Hyt\"onen \cite{Hyt1}.
The approach based on dyadic sparse operators is due to Lerner \cite{L2},
where a weaker form of Theorem \ref{point} with the Banach function space norm estimate instead of the pointwise estimate was obtained. 

An alternative proof of Theorem \ref{point} was given by Conde-Alonso and Rey in \cite{CG}. 
Later, Lacey \cite{La} relaxed the condition on the modulus of continuity $\rho$ to $\int_0^1\rho(t)\frac{dt}{t}<\infty$. 

The bounds for weighted sparse multilinear forms in Section 16 were obtained by Li, Moen, and Sun \cite{LMS}. Their ultimate goal in that paper was also to derive Theorem \ref{mainthm}, but, lacking the pointwise bound given by Theorem \ref{point}, they
 were forced to stay within the category of Banach function spaces and to introduce the extra assumption $p\ge~ 1$.


\begin{thebibliography}{99}

\bibitem{CM1} R.R. Coifman and Y. Meyer, {\it On commutators of
singular integrals and bilinear singular integrals}, Trans. Amer.
Math. Soc. {\bf 212} (1975), 315--331.

\bibitem{CG}
J.M. Conde-Alonso and G. Rey, {\it 
A pointwise estimate for positive dyadic shifts and some applications}, preprint. 
Available at ArXiv: 1409.4351

\bibitem{GR}
J. Garc\'ia-Cuerva, Jose and J. Rubio de Francia,
Weighted norm inequalities and related topics.
North-Holland Mathematics Studies, 116. North-Holland, Amsterdam, 1985.

\bibitem{G}
L. Grafakos, Modern Fourier analysis. Second edition. Graduate Texts in Mathematics, 250. Springer, New York, 2009.

\bibitem{GT1}
L. Grafakos and R.H. Torres, {\it Multilinear Calder\'on-Zygmund
theory}, Adv. Math. {\bf 165} (2002), no. 1, 124--164.

\bibitem{Hyt1}
T. Hyt\"onen, {\it The sharp weighted bound for general
Calder\'on-Zygmund operators}, Ann. of Math. {\bf 175} (2012), no. 3, 1473--1506. Also see arXiv:1007.4330

\bibitem{Hyt2}
T. Hyt\"onen, {\it The $A_2$ theorem: remarks and complements}, Contemp. Math., {\bf 612} (2014),
91--107. Also see arXiv:1212.3840

\bibitem{La}
M. Lacey, {\it An elementary proof of the $A_2$ bound}, preprint. Available at ArXiv: 1501.05818

\bibitem{L2}
A.K. Lerner, {\it A simple proof of the $A_2$ conjecture}, Int. Math. Res. Not., 2013, no.  14,
3159--3170. Also see arXiv:1202.2824

\bibitem{LOPTT}
A.K. Lerner, S. Ombrosi, C. P\'erez, R.H. Torres and R. Trujillo-Gonz\'alez,
{\it New maximal functions and multiple weights for the multilinear Calder\'on-Zygmund theory}, Adv. Math. {\bf 220} (2009),
no. 4, 1222--1264.

\bibitem{LMS}
K. Li, K. Moen and W. Sun, {\it
The sharp weighted bound for multilinear maximal functions and Calder\'on-Zygmund operators},
 J. Fourier Anal. Appl. {\bf 20} (2014), no. 4, 751-–765. Also see arXiv:1212.1054

\bibitem{P}
S. Petermichl, {\it The sharp bound for the Hilbert transform on
weighted Lebesgue spaces in terms of the classical $A_p$-
characteristic}, Amer. J. Math., {\bf 129} (2007), no. 5,
1355--1375.

\bibitem{S}
E.M. Stein, Singular Integrals and differentiability properties of functions,
Princeton Mathematical Series, No. 30 Princeton University Press, Princeton, N.J. 1970.



\end{thebibliography}
\end{document}